\documentclass[reqno]{amsart}

\usepackage[text={400pt,660pt},centering]{geometry}

\usepackage{color}
\usepackage{esint,amssymb}
\usepackage{mathtools}
\usepackage[colorlinks=true, pdfstartview=FitV, linkcolor=blue, citecolor=blue, urlcolor=blue,pagebackref=false]{hyperref}
\usepackage{bm}
\usepackage{mathabx}

\usepackage{accents}
\renewcommand{\mathring}{\accentset{\circ}}

\usepackage{xfrac}



%
%

\newtheorem{theorem}{Theorem}[section]
\newtheorem{proposition}[theorem]{Proposition}
\newtheorem{lemma}[theorem]{Lemma}
\newtheorem{corollary}[theorem]{Corollary}

\theoremstyle{remark}
\newtheorem{remark}[theorem]{Remark}

\theoremstyle{definition}

\numberwithin{equation}{section}
\numberwithin{equation}{section}

\newcommand{\NN}{\mathbb{N}}
\newcommand{\N}{\mathbb{N}}

\newcommand{\ZZ}{\mathbb{Z}}
\newcommand{\RR}{\mathbb{R}}
\newcommand{\R}{\mathbb{R}}

\newcommand{\TT}{\mathbb{T}}

\newcommand{\data}{\mathrm{data}}

\newcommand{\ahom}{\overline{\a}}

\newcommand{\e}{\varepsilon}

\newcommand{\ep}{\varepsilon}

\newcommand{\la}{\lambda}

\newcommand{\rst}[1]{\ensuremath{{\mathbin\upharpoonright}%
		\raise-.5ex\hbox{$#1$}}}

\newcommand{\bbf}{\mathbf{f}}
\newcommand{\bchi}{\bm{\chi}}
\newcommand{\bba}{\mathbf{\overline{a}}}
\newcommand{\bha}{\mathbf{a}}
\renewcommand{\a}{\mathbf{a}}

\newcommand{\bfg}{\mathbf{g}}
\newcommand{\bfs}{\mathbf{s}}

\newcommand{\nnn}{|||}
\newcommand{\bbe}{\mathbf{e}}

\usepackage[utf8]{inputenc}

\title[Periodic Homogenization of Spectrum of Schr\"{o}dinger Operators]{Asymptotic expansion of the spectrum for periodic Schr\"{o}dinger operators}
\author{Scott Armstrong}
\address{Courant Institute of Mathematical Sciences, New York University}
\email{scotta@cims.nyu.edu}
\author{Raghavendra Venkatraman}
\address{Courant Institute of Mathematical Sciences, New York University}
\email{raghav@cims.nyu.edu}
\date{\today}

\begin{document}
	
	\maketitle
	\begin{abstract}
		We prove an asymptotic expansion for the eigenvalues and eigenfunctions of Schr\"{o}dinger-type operator with a confining potential and with principle part a periodic elliptic operator in divergence form. We compare the spectrum to the homogenized operator and characterize the corrections up to arbitrarily high order.  
	\end{abstract}
	
	\section{Introduction}
	\label{s.intro}
	\subsection{Motivation and informal summary of results}
	In this paper we are interested in asymptotic expansions of eigenvalues and eigenfunctions of the operator 
	\begin{equation}
		\label{e.Schrodinger}
		\mathcal{L}_\e 
		:= 
		- \nabla \cdot \bha\bigl(\tfrac{\cdot}{\e}\bigr) \nabla + W\,, 
	\end{equation}
	where $\bha(\cdot)$ is a~$\ZZ^d$--periodic, uniformly elliptic coefficient field valued in the~$d\times d$ symmetric matrices,~$W$ is a confining potential that is quadratic at infinity, and~$\ep >0$ is a small parameter.

	\smallskip
	
	The classical theory of periodic homogenization asserts that, as $\e \to 0$, the behavior of the elliptic operator $\mathcal{L}_\e$ is well-approximated by the constant-coefficient homogenized operator 
	\begin{equation}
		\label{e.Schrodinger0}
		\mathcal{L}_0 := - \nabla \cdot \bba \nabla  + W\,,
	\end{equation}
	where~$\bba$ is a constant symmetric matrix called the homogenized matrix. Owing to the growth of $W$ at infinity, both the operators $\mathcal{L}_\e$ as well as $\mathcal{L}_0$ have a compact resolvent, and therefore have a discrete collection of eigenvalues which we denote by~$\{ \lambda_{\e,j} \}_{j\in\N}$ in the case of~$\mathcal{L}_\e$ and~$\{ \lambda_{0,j} \}_{j\in\N}$ in the case of~$\mathcal{L}_0$. These sequences are arranged in nondecreasing order, repeated according to multiplicity, and increase to infinity as the index~$j \to \infty$. 
	The classical theory of homogenization implies that~$\lambda_{\ep,j} \to \lambda_{0,j}$ as~$\ep \to 0$ for each fixed~$j$, with convergence of the corresponding eigenspaces in~$L^2(\R^d)$ (see~\cite{Ke1,Ke2}). 
	In this paper we are concerned with obtaining quantitative information concerning this convergence. 
	
	\smallskip
	
	We would ideally like to obtain asymptotic expansions in the parameter~$\ep$, hopefully identifying the next-order terms in the expansion. 
	Moreover, we are interested in estimates which are quantitative in both parameters~$j$ and~$\ep$, to identify precisely how high in the spectrum our expansions are valid for, as a function of~$\ep$. 
	
	\smallskip
	
	Such questions have been previously addressed in the context of Dirichlet and Neumann eigenvalue problems in bounded domains.
	In~\cite{KLS1}, the authors prove the estimate
	\begin{equation}
		\label{e.KLS}
		\bigl| \lambda_{\e,j} - \lambda_{0,j} \bigr|
		\leq 
		C \ep \lambda_{0,j}^{\sfrac32} \,.
	\end{equation}
	for a constant $C$ which does not depend on~$j$. 
	For identifying the next-order terms in the context of boundary value problems, the geometry of the boundary and its interaction with the periodic lattice plays an important role.  
	The works~\cite{SaVo,MoVo} characterize the limit points of $\frac{\lambda_{\e,j} - \lambda_{0,j}}\ep$ as $\e \to 0,$ for a \textit{simple} eigenvalue $\lambda_{0,j}$ of $-\nabla \cdot \bba \nabla$ with homogeneous Dirichlet boundary conditions.\footnote{There is also an unpublished manuscript in the website of Vogelius that deals with the homogeneous Neumann boundary condition case, with similar conclusions.} 
	The authors in~\cite{SaVo} demonstrate numerically, that for a planar domain that has faces with rational directions, the possible subsequential limits of the first order correction to the eigenvalue $\frac{\lambda_{\ep,j} - \lambda_{0,j}}{\e}$ as $\e \to 0$ can, in general, be a continuum; the authors in~\cite{MoVo} provide a representation formula for the possible subsequential limits of {\color{blue} $\frac{\lambda_{\ep,j} - \lambda_{0,j}}{\e}$ as $\e\to0$,}  in terms of subsequential limits of corrector equations with oscillating boundary conditions. 
	Thus, in general polygonal domains, it is not possible to identify the~$O(\ep)$ term in an asymptotic expansion due to the behavior of solutions in boundary layers. 
	
	\smallskip

	In smooth, uniformly convex domains, the results in~\cite{Prange} and~\cite{Zhuge} identify the~$O(\ep)$ term in the expansion of~$\lambda_{\ep,j}$ in terms of the solutions of boundary-layer problem, with an error of~$O(\ep^{\sfrac32})$ in~$d>2$ and~$O(\ep^{\sfrac 54})$ in two dimensions, where the implicit constants depend also on~$j$. These results use quantitative estimates for the boundary layer problem in homogenization proved in~\cite{GVM,AKMP,SZ1,SZ2}. To go further in the analysis and understand the higher-order terms in the expansion, a finer analysis of the boundary layer problem is required, beyond the current state of the art.
	
	\smallskip
	
	Our motivation for considering the Schr\"{o}dinger-type operator~$\mathcal{L}_\ep$ in~\eqref{e.Schrodinger} and posing the eigenvalue problems in the whole space is to circumvent the need to understand boundary layers, and thereby give a more complete asymptotic expansion. The role of the quadratically growing potential~$W$ is to provide localization for the eigenfunctions and compactness of the resolvent.
	
	\smallskip
	
	Given a \emph{simple} eigenvalue~$\lambda_{0,j}$ of the operator~$\mathcal{L}_0$ defined in~\eqref{e.Schrodinger}, with corresponding normalized eigenfunction~$\phi_{0,j}$, 
	we exhibit asymptotic expansions for~$\lambda_{\ep,j}$ and~$\psi_{\ep,j}$ of the form
	\begin{equation*}
		\lambda_{\ep,j} 
		= 
		\lambda_{0,j} 
		+
		\sum_{p=2}^P
		\ep^p \mu_p
		+
		O\bigl(\ep^{P+1}\bigr)
	\end{equation*}
	and
	\begin{equation*}
		\psi_{\ep,j} 
		=
		\phi_{0,j}
		+ \sum_{p=2}^P \e^p \sum_{k=2}^p \sum_{m=0}^{p-k} \nabla^m U_k: \bchi_{p-k,m,k}\Bigl( x, \frac{x}{\e} \Bigr)
		+
		O\bigl( \ep^{P+1} \bigr), 
	\end{equation*}
	where the sequences~$\{ \mu_p \}_{\{p\geq 2\}}\subseteq\R$ and~$\{ U_p \}_{\{p\geq 2\}} \subseteq L^2(\R^d)$ are constructed explicitly and depend on~$j$ but not on~$\ep$, and the functions~$\bchi_{p,m,k}$ are correctors that contain the~$\ep$-scale wiggles in the eigenfunction. The implicit norm in the expansion of~$\psi_{\ep,j}$ is the strong~$H^1(\R^d)$ norm.

	\smallskip 
	
	The expansions are valid for any~$P \in\N$, but we should be more explicit  about the error term~$O(\ep^{P+1})$. The term is actually 
	\begin{equation*}
		C\gamma(\lambda_{0,j})
		\biggl( \frac{\ep \lambda_{0,j}^{\sfrac 32}}{\gamma(\lambda_{0,j})} \biggr)^{P+1}\,, \qquad \mbox{where} \quad 
		P \leq c \log \biggl|\log \biggl( \frac{\ep \lambda^{\sfrac 32}}{\gamma(\lambda_{0,j})} \biggr)\biggr|\,,
	\end{equation*}
	where the constant~$C$ does not depend on~$j$,~$p$ nor~$\ep$, and~$\gamma(\lambda_{0,j})$ denotes the spectral gap between~$\lambda_{0,j}$ and the nearest eigenvalue of~$\mathcal{L}_0$ which is not equal~$\lambda_{0,j}$:
	\begin{equation}
		\label{e.sgdef}
		\gamma(\lambda_{0,j})
		:=
		\min \bigl\{ 
		| \lambda_{0,k} - \lambda_{0,j} | \,:\,
		\lambda_{0,k} \neq \lambda_{0,j}
		\bigr\}
		\,.
	\end{equation}
	The point of restricting~$P$ as above is that, without this restriction, the implicit constant in the~$O(\ep^{P+1})$ term actually grows doubly exponentially in~$P$, which renders the estimate useless. 
	Note that this estimate coincides with~\eqref{e.KLS} in the case~$P=0$. 
	We remark that some dependence on the spectral gap is  necessary and occurs even in perturbation theory in finite dimensions (i.e., matrices). 
	See Theorem~\ref{t.simple.full} below for the precise statement. 
	
		\smallskip

The expansion above is not standard in homogenization. This is a reflection of the fact that, to our knowledge, higher-order expansions have only been used previously in the periodic (or stationary) setting. Here, however, the potential creates some macroscopic dependence of the coefficients, and the higher-order expansion must intertwine this dependence with the small scales. Indeed, at higher-order (unlike at first-order) the large macroscopic scale will interact with the microscopic scale, and this interaction is what is captured by the correctors~$\bchi_{p-k,m,k}$ when the parameter~$k$ is at least~$2$. It is due to the presence of these terms in the expansion (which we believe are necessary) that the parameter~$P$ in the expansion above cannot exceed~$c\log |\log\ep|$.

	\smallskip
	
	It should be remarked that the expansion is valid in the case~$P=1$, when we have 
	\begin{equation*}
		| \lambda_{\ep,j} - \lambda_{0,j} |
		+
		\| \psi_{\ep,j} - \phi_{0,j} \|_{H^1(\RR^d)}
		\leq 
		\frac{C \ep^2 \lambda_{0,j}^3}{\gamma(\lambda_{0,j})}\,,
	\end{equation*}
	where $\psi_{\e,j} \in L^2(\RR^d)$ is the eigenfunction of $\mathcal{L}_\e$ associated with eigenvalue $\lambda_{\e,j},$ normalized such that ~$\int_{\RR^d} \psi_{\e,j}\phi_{0,j}\,dx = 1.$ 
	In particular, the~$O(\ep)$ term vanishes. This is due to the simple fact that the leading-order correction to the homogenized operator, represented by the (symmetric part of the) third-order homogenized tensor, is zero. 
	
	\smallskip
	
	Our methods yield similar expansions in the case of eigenvalues of~$\mathcal{L}_0$ with multiplicity, but these asymptotic expansions become difficult to describe in complete generality. It is necessary to describe the entire perturbed eigenspace at once, and the multiplicities can bifurcate (or not) at any higher-order level, leaving us with many different cases to enumerate. For simplicity, we present a result in Theorem~\ref{t.multiple.full} which gives a complete asymptotic expansion in the case that the eigenspace bifurcates at the level of~$\ep^2$ (we do this by assumption that a particular matrix has distinct eigenvalues).

	\subsection{Statements of the main results.} \label{ss.mainth}
	Throughout the paper, $d\geq 2$ denotes the spatial dimension,~$\theta  \in (1,\infty)$ is the ellipticity ratio, and we fix positive constants~$\Lambda,\Lambda_0 > 0$ and $\Lambda_- \leq \Lambda_+$. We consider a coefficient field $\bha(\cdot):\R^d \to \R^{d\times d}$ satisfying the following properties:
	\begin{equation}
		\label{h.sym}
		\bha_{ij} = \bha_{ji}, \quad \forall i,j \in \{ 1,\ldots ,d \} \,,
	\end{equation}
	\begin{equation}
		\label{h.ell} 
		|\xi|^2 \leqslant \langle \bha(x)\xi, \xi \rangle \leqslant \theta |\xi|^2 , \quad \forall \xi \in \RR^d, \mbox{ a.e. } x \in \RR^d,
	\end{equation}
	and
	\begin{equation}
		\label{h.per}
		\bha(x + z) = \bha (x) \quad  \forall z \in \ZZ^d, \mbox{ a.e. } x \in \RR^d.
	\end{equation}
	We assume the potential~$W \in C^\infty(\RR^d)$ satisfies, for all $x\in \RR^d$,
	\begin{equation}
		\label{e.Whypothesis.temp}        | \nabla^k W(x) | 
		\leq
		k!\Lambda^k (1+|x|^2)^{\frac12 (2-k)}, \quad \forall k \in \NN_0\, ,
	\end{equation}
	and
	\begin{equation}
		\label{e.wgrowth}
		\Lambda_-|x|^2  \leqslant W(x) \leqslant \Lambda_+|x|^2\,.
	\end{equation}
	
	Throughout we denote 
	\begin{equation*}
		\data:= (d,\theta, \Lambda,\Lambda_0,\Lambda_+,\Lambda_-)\,.
	\end{equation*}
	So when we say that a constant~$C$ depends on~$\data$, we mean that it depends on the parameters~$(d,\theta, \Lambda,\Lambda_0,\Lambda_+,\Lambda_-)$.
	
	\smallskip
	
	The hypotheses \eqref{h.sym},\eqref{h.ell} and \eqref{e.wgrowth} imply that, for every $\e > 0,$ {\color{blue} setting $\a_\e(\cdot) := \a(\frac{\cdot}{\e}),$} the Schr\"{o}dinger operator 
	\begin{align*}
		\mathcal{L}_\e := -\nabla \cdot \bha_\e \nabla + W(x)\,,
	\end{align*}
	is a positive operator which has discrete spectrum in $L^2(\RR^d).$ To be precise, the eigenvalues of~$\mathcal{L}_\e$ can be put into a sequence~$\{\lambda_{\e,j}\}_{j=1}^\infty \subseteq (0,\infty)$, with $\lambda_{\e,j} \leqslant \lambda_{\e,j+1}$ for every $j \in \NN_0$ and $\lambda_{\e,j} \to +\infty$ as $j \to \infty.$ These eigenvalues evidently have no finite cluster points and are repeated according to multiplicity, with every eigenvalue having finite multiplicity. 
	Associated to these eigenvalues $\{\lambda_{\e,j}\}_{j=1}^\infty$ are eigenfunctions $\{\phi_{\e,j}\}_{j=1}^\infty \subset H^1(\RR^d)$ that may be assumed to be orthonormal in $L^2(\RR^d). $
	
	\smallskip
	
	We let~$\bba$ denote the homogenized matrix corresponding to~$\a(\cdot)$ in the standard theory of periodic homogenization. It satisfies that same uniform ellipticity estimate~\eqref{h.ell}, and the operator~$\mathcal{L}_0$ defined in~\eqref{e.Schrodinger0} captures the leading order asymptotics of the operators~$\mathcal{L}_\e$ as~$\e \to 0$.  
	We arrange the eigenvalues of~$\mathcal{L}_0$ in a nondecreasing sequence~$\{\lambda_{0,j}\}_{j=1}^\infty \subset (0,\infty)$, with eigenvalues repeated according to (finite) multiplicity, with~$\lambda_{0,j} \to \infty$ as~$j \to \infty.$  Associated to the eigenvalues~$\{\lambda_{0,j}\}_{j=1}^\infty$ are $L^2$-normalized eigenfunctions of~$\mathcal{L}_0,$ denoted by~$\{\phi_{0,j}\}_{j=1}^\infty$.
	For any eigenvalue $\lambda_{0,j}$ of $\mathcal{L}_0, $ we define the spectral gap $\gamma(\lambda_{0,j})$ as in~\eqref{e.sgdef}. 
	
	\smallskip
	
	We organize our results in four theorems: Theorems \ref{t.simple} and \ref{t.multiple} give the \textit{first-order} expansions for simple and multiple eigenvalues, respectively, and their associated eigenfunctions to errors that are $O(\e^2),$ with dependence of the prefactor on the eigenvalue. Theorems \ref{t.simple.full}
	and \ref{t.multiple.full} present the higher-order expansions for the eigenvalues and eigenfunctions associated to simple and multiple eigenvalues respectively. 
	
	\begin{theorem}
		\label{t.simple}
		Fix~$j\in\N$ such that~$\lambda_{0,j}$ is a simple eigenvalue of~$\mathcal{L}_0$. 
		There exist constants~$c(\data) \in (0,1]$ and $C(\data)<\infty$ such that, if~$\ep$ satisfies
		\begin{equation} \label{e.eps.condition}
			0<
			\e
			\leq 
			c \gamma(\lambda_{0,j})  \lambda_{0,j}^{-\sfrac32}
			\,,
		\end{equation}
		then the $j$th eigenvalue $\lambda_{\e,j}$ of $\mathcal{L}_\e$ is simple
		and satisfies the estimate
		\begin{equation}
			\label{e.simple.expansion}
			\bigl| \lambda_{\e,j} - \lambda_{0,j} \bigr|
			\leq 
			\frac{C\e^2 \lambda_{0,j}^3}{\gamma(\lambda_{0,j})}
		\end{equation}
		and, moreover, if we let~$\psi_{\e,j}$ denote the corresponding eigenfunction for~$\lambda_{\e,j}$, normalized according to  
		\begin{equation} 
			\label{e.simple.normalization}
			\int_{\RR^d} \psi_{\e,j} \phi_{0,j}\,dx = 1,
		\end{equation}
		then we have the estimate
		\begin{equation} \label{e.simple.eigf}
			\bigl\| \psi_{\e,j} - \bigl( \phi_{0,j} + \e\nabla \phi_{0,j}\cdot\bchi^{(1)}(\tfrac\cdot \e) 
			\bigr) \bigr\|_{H^1(\RR^d)}
			\leq
			\frac{C\e^2 \lambda_{0,j}^3}{\gamma(\lambda_{0,j})}
			\,.
		\end{equation}
	\end{theorem}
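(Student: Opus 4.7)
The plan is to combine a two-scale asymptotic construction with spectral perturbation theory, exploiting throughout the regularity and localization of $\phi_{0,j}$ provided by the quadratically growing potential $W$. I introduce the first-order corrected ansatz
\[
\eta_\e(x) := \phi_{0,j}(x) + \e\, \bchi^{(1)}(x/\e) \cdot \nabla \phi_{0,j}(x),
\]
where $\bchi^{(1)}$ is the standard $\ZZ^d$-periodic vector corrector for $-\nabla \cdot \bha \nabla$. A prerequisite needed at every subsequent step is a family of Hermite-type bounds for the homogenized eigenfunction,
\[
\|\nabla^k \phi_{0,j}\|_{L^2(\RR^d)} + \|(1+|x|)^k \phi_{0,j}\|_{L^2(\RR^d)} \leq C \lambda_{0,j}^{k/2}, \quad k \in \{1,2,3\},
\]
obtained by testing $\mathcal{L}_0 \phi_{0,j} = \lambda_{0,j} \phi_{0,j}$ iteratively against $\mathcal{L}_0^{k} \phi_{0,j}$ and exploiting the commutators of $\mathcal{L}_0$ with the vector fields $\partial_i$ and with multiplication by $x_i$, together with the ellipticity of $\bba$ and the two-sided bound on $W$.

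Next, I estimate the residual $r_\e := (\mathcal{L}_\e - \lambda_{0,j}) \eta_\e$ by using the cell-problem identity $-\nabla_y \cdot \bha(e_k + \nabla_y \chi^{(1)}_k) = 0$ together with the skew-symmetric flux corrector $\boldsymbol{\sigma}^{(1)}$ satisfying $\bha(I + \nabla_y \bchi^{(1)}) - \bba = \nabla_y \cdot \boldsymbol{\sigma}^{(1)}$. The cell problem annihilates the would-be $O(1)$ contribution, and the skew-symmetry of $\boldsymbol{\sigma}^{(1)}$ allows the remaining oscillating piece to be written as an $x$-divergence, producing a decomposition of the form $r_\e = \e\, \nabla \cdot F_\e + \e\, G_\e$ whose $L^2$ pieces are controlled by the Hermite estimates above; in particular $\|r_\e\|_{H^{-1}(\RR^d)} \leq C \e \lambda_{0,j}^{3/2}$.

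The eigenvalue and eigenfunction estimates then follow from spectral perturbation. Let $E_\e$ be the spectral projection of $\mathcal{L}_\e$ onto the interval $(\lambda_{0,j} - \tfrac12 \gamma(\lambda_{0,j}), \lambda_{0,j} + \tfrac12 \gamma(\lambda_{0,j}))$. Combining the resolvent bound $\|(\mathcal{L}_\e - \lambda_{0,j})^{-1}(I - E_\e)\|_{H^{-1}\to H^1} \leq C/\gamma(\lambda_{0,j})$ with the residual estimate gives $\|(I - E_\e) \eta_\e\|_{H^1} \leq C \e \lambda_{0,j}^{3/2}/\gamma(\lambda_{0,j})$, which under the hypothesis~\eqref{e.eps.condition} forces $\mathrm{rank}(E_\e) = 1$ and hence the simplicity of $\lambda_{\e,j}$, with $E_\e \eta_\e$ proportional to $\psi_{\e,j}$. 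The improvement from this initial $O(\e)$ bound to the sharp $O(\e^2)$ rate asserted in the theorem relies on the self-adjoint identity
\[
(\lambda_{\e,j} - \lambda_{0,j})\int_{\RR^d} \psi_{\e,j}\phi_{0,j}\,dx = \int_{\RR^d} \bigl(\bha(\cdot/\e) - \bba\bigr)\nabla\psi_{\e,j} \cdot \nabla\phi_{0,j}\,dx,
\]
into which I substitute $\psi_{\e,j} \approx \eta_\e$ on the right. The principal part then contracts against the symmetric part of the third-order homogenized tensor, which vanishes by the argument noted in the introduction just after the statement of the theorem; a further flux-corrector integration by parts yields the missing factor of $\e$, and propagating this cancellation through the resolvent gives the $H^1$ estimate on $\psi_{\e,j} - \eta_\e$ at the claimed rate.

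The main obstacle will be matching the precise polynomial dependence $\lambda_{0,j}^3$ on the eigenvalue. Each derivative of $\phi_{0,j}$ or each integration by parts typically costs a power of $\lambda_{0,j}^{1/2}$, and reaching the stated exponent requires applying the Hermite estimates sharply and carefully exploiting every cancellation coming from self-adjointness and from the vanishing of the symmetric part of the third-order tensor. A secondary technical point, handled by a continuity-in-$\e$ argument combined with the uniform estimates above, is verifying that $\mathrm{rank}(E_\e) = 1$ holds uniformly across the admissible range of $\e$.
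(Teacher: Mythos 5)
Your overall strategy is close in spirit to the paper's, and the eigenvalue estimate can be closed along the lines you sketch, but there is a genuine gap in the way you propose to obtain the $H^1$ eigenfunction estimate at the $O(\e^2)$ rate.

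The paper's actual ansatz is not $\eta_\e = \phi_{0,j} + \e\bchi^{(1)}(\cdot/\e)\cdot\nabla\phi_{0,j}$ but rather $w_\e - z_\e$, where $z_\e(x) = \e^3 z(x,x/\e)$ is obtained by solving an auxiliary cell problem $-\nabla_y\cdot\bha\nabla_y z = (W(x)-\lambda_{0,j})\nabla\phi_{0,j}\cdot\bchi^{(1)}(y)$. This extra corrector is essential: it is precisely what absorbs the remaining $O(\e)$ oscillating term $\e\,(W-\lambda_{0,j})\nabla\phi_{0,j}\cdot\bchi^{(1)}(\cdot/\e)$ in the residual and thereby makes the residual of the full ansatz $O(\e^2)$. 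Since $\|z_\e\|_{H^1}=O(\e^2)$, the paper can then discard it at the end and still report $\|\psi_{\e,j}-\eta_\e\|_{H^1}=O(\e^2\lambda^3/\gamma)$. Your proposal keeps $\eta_\e$ alone, honestly estimates $\|r_\e\|_{H^{-1}}\lesssim\e\lambda^{3/2}$, and concludes $\|(I-E_\e)\eta_\e\|_{H^1}\lesssim\e\lambda^{3/2}/\gamma$. That is only $O(\e)$: a residual that is $O(\e)$ in $L^2$ (or even in $H^{-1}$) combined with the resolvent bound $\|(\mathcal{L}_\e-\lambda)^{-1}(I-E_\e)\|\lesssim 1/\gamma$ can only ever give $\|\psi_{\e,j}-\eta_\e\|_{H^1}=O(\e/\gamma)$. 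The self-adjoint identity you invoke does extract a square gain for the \emph{eigenvalue}, and if you verify the identity $(\lambda_{\e,j}-\lambda_{0,j})=\int(\bha^\e-\bba)\nabla\psi_{\e,j}\cdot\nabla\phi_{0,j}$ carefully you will see that substituting $\psi\approx\eta_\e$ amounts to estimating $\int r_\e\,\phi_{0,j}$, which is indeed $O(\e^2)$ by the $\bba^{(3),s}$ cancellation plus one more integration by parts of the oscillating term. But nothing in this chain gives a gain on the component of $r_\e$ orthogonal to $\psi_{\e,j}$, which is what governs the eigenfunction error, and ``propagating this cancellation through the resolvent'' does not identify a mechanism by which that orthogonal component becomes $O(\e^2)$.

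The fix is to apply your cancellations at the residual stage, not only at the eigenvalue stage. Two facts are needed. First, the cyclic symmetry of $\bba^{(3),s}$ proved in Lemma~\ref{l.3rdorder.symm} implies the pointwise identity $\bba^{(3)}:\nabla^3\phi_{0,j}\equiv 0$, not merely $\mu_1=\int(\bba^{(3)}:\nabla^2\phi_{0,j})\cdot\nabla\phi_{0,j}=0$; this kills the slowly varying $O(\e)$ contribution to $r_\e$ (and is why $U_1\equiv 0$ in the paper's construction). Second, the remaining $O(\e)$ piece $\e(W-\lambda_{0,j})\nabla\phi_{0,j}\cdot\bchi^{(1)}(\cdot/\e)$ is oscillatory with mean zero in the fast variable, and can be written as an $\e$ times an $x$-divergence of an $L^2$-bounded field using a potential for $\bchi^{(1)}$ --- equivalently, absorbed by the $z_\e$ corrector --- so it contributes $O(\e^2)$ in $H^{-1}$. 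Once both are in hand you get $\|r_\e\|_{H^{-1}}\lesssim\e^2\lambda^3$ rather than $\e\lambda^{3/2}$, and the resolvent/spectral-projection argument you already set up then delivers both estimates directly, without a separate improvement step. As written, the proposal identifies the right cancellations but deploys them only in the eigenvalue bound, leaving the claimed $O(\e^2)$ eigenfunction rate unsupported.
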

\begin{remark}
			The condition~\eqref{e.eps.condition} is optimal with respect to the homogenization regime. Indeed, the estimate~\eqref{e.KLS} asserts that the eigenvalues of the $\mathcal{L}_\e$ operator do not deviate from those of $\mathcal{L}_0$ by more than $C \e\lambda_{0,j}^{\sfrac32}.$ The condition~\eqref{e.eps.condition} essentially guarantees that so long as this deviation does not exceed the spectral gap of $\mathcal{L}_0$ at the eigenvalue $\lambda_{0,j}$, then the eigenvalues of the homogenized operator approximate those of the heterogeneous operator to quadratic order. 
			\end{remark}
	The previous result, which expands a simple eigenvalue to a precision of roughly~$\ep^2$, is a special case of our next result which provides a higher-order expansion to a  precision of roughly~$\ep^{c \log |\log \ep|}$. This higher-order expansion is given in terms of certain objects---namely the homogenized tensors~$\bba_{q,m,k}$, correctors~$\bchi_{q,m,k}$, and the sequences of corrections to the eigenvalues~$\{ \mu_k\}_{k \geq 2}$ and eigenfunctions~$\{ U_k\}_{k \geq 2}$---which are  defined in Section~\ref{s.simple} via a recursive construction. 
	
	\begin{theorem}
		\label{t.simple.full}
		Under the hypotheses of Theorem~\ref{t.simple}, there exist constants~$c(\data)\in (0,1]$ and $C(\data) > 1$ such that, if $\e>0$ satisfies \eqref{e.eps.condition} and we define 
		\begin{align*}
			P := 
			\Biggl\lfloor c\log \,\biggl| \log \biggl( \!\frac{\e \lambda_{0,j}^{\sfrac32}}{\gamma({\lambda_{0,j}})}\!\biggr)
			\biggr|
			\Biggr\rfloor\,,
		\end{align*}
		along with 
		\begin{align*}
			\left\{
			\begin{aligned}
				&\widetilde{\lambda}_\e := \lambda_{0,j} + \sum_{p=2}^P \e^p \mu_p, \\
				&w_\e := \phi_{0,j} + \sum_{p=2}^P \e^p \sum_{k=2}^p \sum_{m=0}^{p-k} \nabla^m U_k: \bchi_{p-k,m,k}\Bigl( x, \frac{x}{\e} \Bigr),
			\end{aligned}
			\right.
		\end{align*} 
		then the $j$th eigenvalue $\lambda_{\e,j}$ of~$\mathcal{L}_\e$ is simple,  and its associated eigenfunction $\psi_{\e,j}$ of $\mathcal{L}_\e$ normalized according to~\eqref{e.simple.normalization}, 
		admits the asymptotic expansion 
		\begin{equation*}
			\frac{1}{\gamma(\lambda_{0,j})}
			\bigl(
			|\lambda_{\e,j} - \widetilde{\lambda}_\e| +
			\|\psi_{\e,j} -  w_\e \|_{H^1(\RR^d)}
			\bigr) 
			\leqslant
			\rho\biggl( \frac{\e \lambda_0^{\sfrac32}}{\gamma({\lambda_0})}\biggr)\, ,
		\end{equation*}
		where the modulus $\rho:(0,1) \to (0,\infty)$ is defined by 
		\begin{equation*}
			\rho(t) := C t^{c\log |\log t|}\,. 
		\end{equation*}
	\end{theorem}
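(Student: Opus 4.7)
The plan is to: (i) construct the two-scale formal expansion by matching powers of $\e$ in $(\mathcal{L}_\e - \widetilde\lambda_\e) w_\e$; (ii) bound the correctors, profiles, and residual quantitatively in $\lambda_{0,j}$ and $\gamma(\lambda_{0,j})$, then optimize $P$; (iii) convert the resulting near-eigenpair into a bound for $(\lambda_{\e,j}, \psi_{\e,j})$ via a Riesz projector argument. For step (i), substituting the ansatz and using the two-scale chain rule $\nabla(f(x,\tfrac{x}{\e})) = \nabla_x f + \e^{-1} \nabla_y f$ separates the fast and slow scales. Matching powers of $\e$ yields at each order $\e^p$ a cascade: periodic cell problems on $\TT^d$ that recursively determine $\bchi_{q,m,k}$ and $\bba_{q,m,k}$, together with a whole-space equation $(\mathcal{L}_0 - \lambda_{0,j}) U_p = f_p$ whose right-hand side is built from $U_2, \ldots, U_{p-1}$, the correctors, and derivatives of $\phi_{0,j}$. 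Simplicity of $\lambda_{0,j}$ makes the Fredholm compatibility condition $\int_{\RR^d} f_p \phi_{0,j}\,dx = 0$ uniquely fix $\mu_p$, and imposing $U_p \perp \phi_{0,j}$ in $L^2(\RR^d)$ determines $U_p$ uniquely; this choice is compatible with $\int_{\RR^d} w_\e \phi_{0,j}\,dx = 1$.

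For step (ii), the technical heart is an inductive estimate of the form
\begin{equation*}
\|\bchi_{q,m,k}\|_{L^\infty(\RR^d \times \TT^d)} + |\mu_k| \leqslant C^{q+m+k}, \qquad \|\nabla^s U_k\|_{L^2(\RR^d)} \leqslant \bigl( C \lambda_{0,j}^{\sfrac12}\bigr)^s \Bigl( \frac{C \lambda_{0,j}^{\sfrac32}}{\gamma(\lambda_{0,j})} \Bigr)^{k},
\end{equation*}
for a constant $C = C(\data)$. The corrector bound follows from periodic elliptic regularity on the cell. The $U_k$ bound is obtained by expanding in the orthonormal $L^2$ eigenbasis of $\mathcal{L}_0$: each inversion of $\mathcal{L}_0 - \lambda_{0,j}$ on $\phi_{0,j}^\perp$ costs a factor $\gamma(\lambda_{0,j})^{-1}$, while the polynomial dependence in $\lambda_{0,j}$ reflects Bernstein-type estimates $\|\nabla^s \phi_{0,j}\|_{L^2} \leqslant C^s \lambda_{0,j}^{s/2}$, available because the confining potential $W$ forces Gaussian decay on $\phi_{0,j}$. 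Aggregating these estimates and summing the $O(P^2)$ multi-index terms at each level yields the residual bound
\begin{equation*}
\bigl\|(\mathcal{L}_\e - \widetilde\lambda_\e) w_\e\bigr\|_{L^2(\RR^d)} \leqslant (CP)^{CP} \biggl( \frac{\e \lambda_{0,j}^{\sfrac32}}{\gamma(\lambda_{0,j})}\biggr)^{P+1}.
\end{equation*}
Setting $t := \e \lambda_{0,j}^{\sfrac32}/\gamma(\lambda_{0,j})$ and optimizing in $P$, the choice $P = \lfloor c\log|\log t|\rfloor$ balances the prefactor $(CP)^{CP}$ against $t^{P+1}$ and produces the modulus $\rho(t) = Ct^{c\log|\log t|}$.

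For step (iii), by Theorem~\ref{t.simple} the eigenvalue $\lambda_{\e,j}$ is simple with $|\lambda_{\e,j} - \lambda_{0,j}| \leqslant \tfrac14 \gamma(\lambda_{0,j})$, so the Riesz projector $\Pi_\e = \tfrac{1}{2\pi i}\oint_{\Gamma}(z - \mathcal{L}_\e)^{-1}\,dz$ onto the one-dimensional $\lambda_{\e,j}$-eigenspace is well defined on the circle $\Gamma$ of radius $\gamma(\lambda_{0,j})/2$ around $\lambda_{0,j}$. Using the resolvent identity $(z - \mathcal{L}_\e)^{-1} w_\e = (z - \widetilde\lambda_\e)^{-1} w_\e + (z - \mathcal{L}_\e)^{-1}(z - \widetilde\lambda_\e)^{-1} (\mathcal{L}_\e - \widetilde\lambda_\e) w_\e$, integrating along $\Gamma$, and invoking the normalization $\int_{\RR^d} w_\e \phi_{0,j}\,dx = 1$, one obtains
\begin{equation*}
|\lambda_{\e,j} - \widetilde\lambda_\e| + \|\psi_{\e,j} - w_\e\|_{H^1(\RR^d)} \leqslant \frac{C}{\gamma(\lambda_{0,j})}\bigl\|(\mathcal{L}_\e - \widetilde\lambda_\e) w_\e\bigr\|_{L^2(\RR^d)},
\end{equation*}
which combined with the residual bound from step (ii) gives the claimed estimate.

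The main obstacle is the inductive quantitative analysis in step (ii): the recursion couples correctors to higher derivatives of lower-order profiles, and tracking precisely how powers of $\lambda_{0,j}$ and $\gamma(\lambda_{0,j})^{-1}$ propagate through all admissible multi-indices $(q,m,k)$ with $q+m+k \leqslant P$ requires careful combinatorial bookkeeping. The resulting growth of the constants at level $p$, roughly $(Cp)^{Cp}$, is exactly what forces the truncation $P \sim \log|\log(\e \lambda_{0,j}^{\sfrac32}/\gamma(\lambda_{0,j}))|$ and gives $\rho$ its doubly logarithmic dependence; this analysis is where the bulk of the technical work lies.
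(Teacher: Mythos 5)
Your proposal captures the overall architecture of the proof — a two-scale ansatz generating a hierarchy of cell problems and macroscopic equations, a recursive construction with inductive quantitative bounds, optimization of the truncation level $P$, and a spectral argument to pass from an approximate eigenpair to the true one. Step (iii) via the Riesz projector is a legitimate alternative to the paper's direct Plancherel/eigenbasis expansion in Steps 2--3 of Theorem~\ref{t.simple}; these are essentially equivalent. However, step (ii), which you correctly identify as the technical heart, contains two real gaps.

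First, the claimed corrector bound $\|\bchi_{q,m,k}\|_{L^\infty(\RR^d\times\TT^d)}\leqslant C^{q+m+k}$ is false. The correctors $\bchi_{q,m,k}(x,y)$ depend on the slow variable $x$ through the quadratically growing potential $W$ (see \eqref{e.hocorrq}), and in fact satisfy $\|\bchi_{q,m,k}(x,\cdot)\|_{H^1(\TT^d)}\sim (\lambda+|x|^2)^{q/2}$ — a polynomial growth in $x$ that increases in $q$ (see the third estimate of Proposition~\ref{p.exp.bounds}). This unboundedness is structurally unavoidable, and is precisely why the paper introduces the weighted norm $\nnn\cdot\nnn_{\lambda,\Theta}$ with an inverse-Gaussian weight: the corrector growth must be absorbed against the Gaussian decay of $U_k$, which must itself be quantified uniformly in $k$ (Lemmas \ref{l.decayests} and \ref{l.reg}). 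A naive $L^\infty$ corrector bound breaks the whole cascade.

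Second, the claimed bound $\|\nabla^s U_k\|_{L^2}\leqslant (C\lambda^{1/2})^s\,(C\lambda^{3/2}/\gamma)^k$ is only singly exponential in $k$, while the actual growth established in Proposition~\ref{p.exp.bounds} carries a doubly-exponential constant $\exp(C^{k+1})$. That extra blow-up comes from the recursion: the estimate for $\mu_{K-1}$ and $U_{K-1}$ involves sums over all lower levels $k\leqslant K-2$, each contributing its own constant $\exp(A^{k+1})$ and its own factorials coming from $x$-derivatives, $\Theta$-scaling, and corrector products (see Steps~3--5 of the proof of Proposition~\ref{p.exp.bounds}, where $(K!)^3$-type factors and $\exp(A^{K-1}+A^{K-2})$ compound). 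If your singly-exponential bound were true, the optimization of $P$ against $t^{P+1}$ would allow $P$ to grow like a power of $|\log t|$ and would yield a modulus substantially better than $t^{c\log|\log t|}$; the fact that the optimal $P$ is only $\sim\log|\log t|$ is exactly the fingerprint of the doubly-exponential constant growth. In other words, your claimed bound and your chosen $P$ are mutually inconsistent: plugging $P=c\log|\log t|$ into $(CP)^{CP}t^{P+1}$ does give $t^{c\log|\log t|}$, but it is far from optimal for that prefactor, which means either the prefactor is wrong or a stronger theorem should be stated. The paper's prefactor is the one that makes $P\sim\log|\log t|$ genuinely optimal.
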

	
	Concerning multiple eigenvalues, once again we offer two theorems: the analog of Theorem \ref{t.simple} is in Theorem \ref{t.multiple} below where we provide the first order expansions for $N > 1$ eigenvalue-eigenfunction pairs of $\mathcal{L}_\e,$ that coalesce into a single eigenvalue of the homogenized operator $\mathcal{L}_0$ of high multiplicity $N.$ The analog of Theorem \ref{t.simple.full} is Theorem \ref{t.multiple.full}, which contains the high order asymptotic expansion for multiple eigenvalues.
	For these theorems, we make a simplifying assumption that a certain symmetric matrix which arises in the analysis has distinct eigenvalues, which ensures that the eigenspace bifurcates into~$N$ distinct branches. This is the matrix~$\mathbb{D}$ given in~\eqref{e.good.Drs}. We expect that this assumption is generic, although certainly not always satisfied. In the case it is not satisfied, one needs to study another such symmetric matrix which occurs at a higher-order level in the expansion. For a general result, one needs to study all possible splittings of the eigenspace at all possible levels in the analysis, which is something we do not attempt to describe fully here. 
	
	\begin{theorem}
		\label{t.multiple} 
		Fix $j \in \NN$ such that $\la_{0,j}$ is a multiple eigenvalue of $\mathcal{L}_0$ of multiplicity $N \geqslant 1$, labeled such that $\la_{0,j} = \la_{0,j+1} = \ldots = \la_{0,j+N-1}$. Let $\{ \phi_{0,j+r}\}_{r=0,\ldots,N-1}$ be an othonormal basis for the associated eigenspace. 
		Assume that the~$N$-by-$N$ symmetric matrix~$\mathbb{D}$ defined in~\eqref{e.good.Drs} has~$N$ distinct eigenvalues. 
		Then there exist constants $c(\data) \in (0,1]$ and $C(\data) < \infty$ such that, if $\e $ satisfies 
		\begin{align}
			0 < \e \leqslant c \gamma(\la_{0,j})\la_{0,j}^{-\sfrac32} \,,
		\end{align}
		then for each $r = 0,\ldots, N-1$,
		\begin{align*}
			|\la_{\e,j+r} - \la_{0,j+r}| \leqslant \frac{C \e^2\la_{0,j}^3}{\gamma(\la_{0,j})}\,,
		\end{align*}
		and moreover, there exists an orthogonal matrix $E \in \RR^{N \times N}$ with $E = (e^r_s)$, such that if we normalize  the associated eigenfunctions (and relabel them using $\psi_{\e,j+r}, r = 0,\ldots, N-1$) according to 
		\begin{equation}
			\label{e.normalization}
			\int_{\RR^d} \psi_{\e,j+r} \phi_{0,j+s}\,dx = e^r_s\,,
		\end{equation}
		then we have the estimate 
		\begin{align}
			 \Bigl\|\psi_{\e,j+r} - \sum_{s=0}^{N-1} e^r_s \bigl( \nabla \phi_{0,j+r} + \e\nabla \phi_{0,j+r} : \bchi^{(1)}(\tfrac{\cdot}{\e})\bigr)\Bigr\|_{H^1(\RR^d)} 
			\leqslant 
			\frac{C \e^2\la_{0,j}^3}{\gamma(\la_{0,j})}\,.
		\end{align}
	\end{theorem}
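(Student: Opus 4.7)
\textbf{Quasimodes and the matrix $\mathbb{D}$.} The plan is to build an $N$-dimensional quasimode subspace from the two-scale expansion of the basis $\{\phi_{0,j+s}\}_{s=0}^{N-1}$, then read off the order-$\e^2$ correction as the symmetric matrix $\mathbb{D}$ of~\eqref{e.good.Drs} acting on this eigenspace. For each $s\in\{0,\ldots,N-1\}$ I would set
\[
v_{\e,s}(x) := \phi_{0,j+s}(x) + \e\,\nabla \phi_{0,j+s}(x):\bchi^{(1)}(x/\e),
\]
and compute $r_{\e,s} := (\mathcal{L}_\e - \lambda_{0,j}) v_{\e,s}$ following the two-scale calculation used in the proof of Theorem~\ref{t.simple}, which we may invoke. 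The step that must be carried out with more care than in the simple case is the identification of the order-$\e^2$ content of the residual: the $L^2$-pairing of $r_{\e,s}$ against $\phi_{0,j+r}$ must be shown to equal $\e^2 \mathbb{D}_{rs}$ up to an $H^{-1}$-remainder of size $C\e^3 \lambda_{0,j}^3/\gamma(\lambda_{0,j})$. The remainder estimates rely on the Agmon-type decay of $\phi_{0,j+s}$ supplied by the confining potential and the polynomial bounds~\eqref{e.Whypothesis.temp}.

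\textbf{Min--max localization of the cluster.} The matrices $\langle \mathcal{L}_\e v_{\e,s}, v_{\e,s'}\rangle_{L^2}$ and $\langle v_{\e,s}, v_{\e,s'}\rangle_{L^2}$ then take the forms $\lambda_{0,j} I + \e^2 \mathbb{D} + O(\e^3\lambda_{0,j}^3/\gamma(\lambda_{0,j}))$ and $I + O(\e^2)$ respectively. Applying Courant--Fischer on $\vsgen(v_{\e,0},\ldots,v_{\e,N-1})$ traps at least $N$ eigenvalues of $\mathcal{L}_\e$ in the interval
\[
I_\e := \bigl[\lambda_{0,j} - C\e^2\lambda_{0,j}^3/\gamma(\lambda_{0,j}),\; \lambda_{0,j} + C\e^2\lambda_{0,j}^3/\gamma(\lambda_{0,j})\bigr].
\]
Under~\eqref{e.eps.condition}, $I_\e$ is separated from the rest of $\mathrm{spec}(\mathcal{L}_0)$ by at least $\tfrac12\gamma(\lambda_{0,j})$, and combined with the qualitative convergence $\lambda_{\e,k}\to\lambda_{0,k}$ this identifies the trapped eigenvalues as exactly $\lambda_{\e,j},\ldots,\lambda_{\e,j+N-1}$, delivering the eigenvalue bound.

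\textbf{Bifurcation via Davis--Kahan.} Let $\Pi_\e$ be the $L^2$-spectral projector of $\mathcal{L}_\e$ onto $\vsgen(\psi_{\e,j},\ldots,\psi_{\e,j+N-1})$. A contour-integral representation of $\Pi_\e$ together with the $H^{-1}$ residual bounds yields $\|v_{\e,s} - \Pi_\e v_{\e,s}\|_{H^1} \leq C\e^2 \lambda_{0,j}^3/\gamma(\lambda_{0,j})$, so Gram--orthonormalizing $\{\Pi_\e v_{\e,s}\}$ costs only $O(\e^2)$. The matrix of $\mathcal{L}_\e$ restricted to the cluster eigenspace in this basis is then $\lambda_{0,j} I + \e^2 \mathbb{D}$ up to $O(\e^3\lambda_{0,j}^3/\gamma(\lambda_{0,j}))$. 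Diagonalizing $\mathbb{D}$ by an orthogonal $E = (e^r_s)$ and applying a Davis--Kahan $\sin\Theta$ estimate identifies the perturbed eigenvectors as the $\psi_{\e,j+r}$ with the normalization~\eqref{e.normalization}, and the stated $H^1$ bound then reads off immediately.

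\textbf{Main obstacle.} The key difficulty is the final bifurcation step. Qualitative convergence of the $N$-dimensional cluster eigenspace in the gap metric is not enough to \emph{identify} which linear combinations of the $\phi_{0,j+s}$ appear at leading order in each $\psi_{\e,j+r}$. The distinctness hypothesis on $\mathbb{D}$ enters quantitatively through the $\sin\Theta$ theorem: the spectral gap of $\mathbb{D}$ must dominate the $O(\e^3)$ cubic remainder, and this requirement is what pins down the rotation $E$ uniquely (up to the ordering that matches $\lambda_{\e,j+r}$ and inevitable column signs) and is consistent with the dependence of the constants on $\gamma(\lambda_{0,j})$ and the smallness condition~\eqref{e.eps.condition}.
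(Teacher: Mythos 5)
Your route is genuinely different from the paper's. The paper diagonalizes $\mathbb{D}$ up front: it sets $U_{0,j+r}=\sum_s e^r_s\phi_{0,j+s}$ from the eigenvectors of $\mathbb{D}$, builds quasimodes $w_{\e,j+r}$ (together with the $z_{\e,j+r}$ correction) already adapted to the branches, shows they are approximately orthonormal, and then estimates the residual by a Plancherel decomposition in the eigenbasis of $\mathcal{L}_\e$. You instead build quasimodes from the raw basis $\{\phi_{0,j+s}\}$, compare the form matrix and Gram matrix on their span, trap the cluster by min--max, and diagonalize at the end via a Davis--Kahan argument. One pleasant feature of your route is that the formula \eqref{e.good.Drs} pops out directly of the Rayleigh-quotient computation, so you bypass the tensor identity of Lemma~\ref{l.Dsym}.

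There are, however, a few places where your sketch cuts corners in ways that would give wrong answers if taken literally. First, the claim that $\langle r_{\e,s},\phi_{0,j+r}\rangle=\e^2\mathbb{D}_{rs}+O(\e^3\ldots)$ is not right: pairing the $O(\e)$ residual term $\e(W-\lambda_0)\nabla\phi_{0,j+s}\cdot\bchi^{(1)}(\cdot/\e)$ against the \emph{non-oscillating} $\phi_{0,j+r}$ costs you $O(\e^\infty)$ by Riemann--Lebesgue, whereas the product-of-two-correctors structure in \eqref{e.good.Drs} only appears when you test against the full quasimode $v_{\e,r}$ including its $\e$-corrector term. Your subsequent sentence about $\langle\mathcal{L}_\e v_{\e,s},v_{\e,s'}\rangle$ tests the right object --- but there the $\e^2$ coefficient is $\lambda_{0,j}\mathbb{B}_{ss'}+\mathbb{D}_{ss'}$, where $\mathbb{B}_{ss'}=\sum_{ik}\langle\bchi_{1,e_i,0}\bchi_{1,e_k,0}\rangle\int\partial_i\phi_{0,j+s}\partial_k\phi_{0,j+s'}$ is the $\e^2$ Gram correction; the $\lambda_{0,j}\mathbb{B}$ piece cancels only after forming $B^{-1}A$ in the generalized eigenproblem, so diagonalizing the form matrix $A$ alone would give the wrong rotation. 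Second, to get the $H^{-1}$ residual bound at rate $O(\e^2\lambda_{0,j}^{3}/\gamma(\lambda_{0,j}))$ --- which the Davis--Kahan/contour step requires --- you must first demote the $O(\e)$-in-$L^2$ term $\e(W-\lambda_0)\nabla\phi_{0,j+s}\cdot\bchi^{(1)}(\cdot/\e)$ to $\nabla\cdot(\e^2\cdot\text{bounded})+\e^2\cdot\text{bounded}$ by writing $\bchi^{(1)}$ as a divergence of a periodic stream field; this is precisely what the $z_\e$ construction in Step~1 of the proof of Theorem~\ref{t.simple} accomplishes and it is not optional. Finally, identifying the $N$ trapped eigenvalues as $\lambda_{\e,j},\ldots,\lambda_{\e,j+N-1}$ requires the quantitative a priori bound of Proposition~\ref{p.eigvalconv}, not qualitative convergence alone.
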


	Our final result concerns a higher-order asymptotic expansion for the spectrum of~$\mathcal{L}_\e$ near an eigenvalue of~$\mathcal{L}_0$ with multiplicity.
	
	\begin{theorem}
		\label{t.multiple.full}
		
		Under the hypotheses of Theorem~\ref{t.multiple}, there exist constants~$c(\data)\in (0,1]$ and $C(\data) > 1$ such that, if the matrix $E = (e^r_s)_{r,s=0,\ldots,N-1}$ are as in Theorem \ref{t.multiple},  $\e>0$ satisfies \eqref{e.eps.condition} and we define 
		\begin{equation*}
			P := 
			\Biggl\lfloor c\log \,\biggl| \log \biggl( \!\frac{\e \lambda_{0,j}^{\sfrac32}}{\gamma({\lambda_{0,j}})}\!\biggr)
			\biggr|
			\Biggr\rfloor\,, \quad 
			U_{0,j+r} := \sum_{s=0}^{N-1} e^r_s \phi_{0,j+s}\,,
		\end{equation*}
		along with 
		\begin{align*}
			\left\{
			\begin{aligned}
				&\widetilde{\lambda}_{\e,j+r} := \lambda_{0,j} + \sum_{p=2}^P \e^p \mu_{p,j+r}, \\
				&w_\e := U_{0,j+r} + \sum_{p=2}^P \e^p \sum_{k=2}^p \sum_{m=0}^{p-k} \nabla^m U_{k,j+r}: \bchi_{p-k,m,k,r}\Bigl( x, \frac{x}{\e} \Bigr),
			\end{aligned}
			\right.
		\end{align*} 
		then for each $r = 0,\ldots, N-1 ,$ the eigenvalues $\lambda_{\e,j+r}$ of~$\mathcal{L}_\e$ and their associated eigenfunctions, $\{\psi_{\e,j+r}\}_{r=0}^{N-1}$ of $\mathcal{L}_\e$ normalized according to~\eqref{e.normalization}, 
		admit the asymptotic expansion 
		\begin{equation*}
			\frac{1}{\gamma(\lambda_{0,j})}
			\Bigl(
			|\lambda_{\e,j} - \widetilde{\lambda}_\e| +
			\|\psi_{\e,j} -  w_\e \|_{H^1(\RR^d)}
			\Bigr) 
			\leqslant
			\rho\biggl( \frac{\e \lambda_{0,j}^{\sfrac32}}{\gamma({\lambda_{0,j}})}\biggr)\, ,
		\end{equation*}
		where the modulus $\rho:(0,1) \to (0,\infty)$ is defined by 
		\begin{equation*}
			\rho(t) := C t^{c\log |\log t|}\,. 
		\end{equation*}

	\end{theorem}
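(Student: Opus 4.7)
The plan is to adapt the recursive two-scale framework that drives the proof of Theorem~\ref{t.simple.full} to the multiple-eigenvalue setting, using the diagonalization of the matrix $\mathbb{D}$ to decouple the~$N$ eigenvalue branches so that each one can then be treated essentially as in the simple case. I would begin by writing the two-scale ansatz
\begin{equation*}
\psi_{\e,j+r}(x) \approx U_{0,j+r}(x) + \sum_{p \geq 2} \e^p V_r^{(p)}(x,x/\e),\quad
\lambda_{\e,j+r} \approx \lambda_{0,j} + \sum_{p\geq 2} \e^p \mu_{p,j+r},
\end{equation*}
inserting these into the eigenvalue equation, and equating coefficients of each power of $\e$. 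As in the simple case, each order $p$ produces a cell problem on $\TT^d$ for a new corrector $\bchi_{q,m,k,r}$ and, after averaging over the fast variable, a slow-scale equation $(\mathcal{L}_0 - \lambda_{0,j}) U_{p,j+r} = F_{p,r}$ on $\R^d$ whose solvability requires a Fredholm compatibility condition: the right-hand side must be orthogonal in $L^2(\R^d)$ to the $N$-dimensional eigenspace of $\mathcal{L}_0$ at $\lambda_{0,j}$.

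The new phenomenon at order $\e^2$ is that this compatibility condition, after projection onto the basis $\{\phi_{0,j+s}\}_{s=0,\ldots,N-1}$, becomes a genuinely $N \times N$ matrix eigenvalue problem whose operator is exactly $\mathbb{D}$. The distinct-eigenvalue hypothesis on $\mathbb{D}$ guarantees that it has $N$ simple eigenpairs $(\mu_{2,j+r}, (e^r_s)_{s=0,\ldots,N-1})$, which define the orthogonal matrix $E=(e^r_s)$ and the rotated basis $U_{0,j+r} := \sum_s e^r_s \phi_{0,j+s}$. With this canonical choice the degeneracy is broken: on branch $r$, the matrix $\mathbb{D} - \mu_{2,j+r}I$ is invertible on the orthogonal complement of $\vsgen(U_{0,j+r})$ inside the unperturbed eigenspace, with smallest singular value controlled from below by the gap between consecutive eigenvalues of $\mathbb{D}$.

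For $p \geq 3$ the construction proceeds inductively on each branch $r$ independently. The compatibility condition of the order-$\e^p$ slow equation splits into two pieces: its projection onto $U_{0,j+r}$ determines $\mu_{p,j+r}$ uniquely, while its projections onto $U_{0,j+s}$ for $s \neq r$ are solved using the inverse of $\mathbb{D} - \mu_{2,j+r}I$ on the orthogonal complement, which fixes the eigenspace component of $U_{p,j+r}$. The component of $U_{p,j+r}$ lying in the orthogonal complement of the eigenspace is then obtained by inverting $\mathcal{L}_0 - \lambda_{0,j}$ on its range, and the correctors $\bchi_{q,m,k,r}$ are built from cell problems on $\TT^d$ exactly as in the simple case. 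The normalization~\eqref{e.normalization}, replacing~\eqref{e.simple.normalization}, pins down the remaining degree of freedom in each $U_{p,j+r}$.

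Once the approximate pairs $(\widetilde{\lambda}_{\e,j+r}, w_\e)$ are constructed, the final step is an almost-eigenvector estimate: a direct computation exploiting the telescoping structure of the ansatz and the corrector equations yields a bound of the form $\|(\mathcal{L}_\e - \widetilde{\lambda}_{\e,j+r})w_\e\|_{L^2(\R^d)} \leq \gamma(\lambda_{0,j})\rho(\e\lambda_{0,j}^{3/2}/\gamma(\lambda_{0,j}))$. Combining this with Theorem~\ref{t.multiple}, which guarantees that each $\lambda_{\e,j+r}$ is already simple and well-isolated in the spectrum of $\mathcal{L}_\e$ for small $\e$, together with standard spectral perturbation estimates, closes the argument. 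The main obstacle, identical to the one confronted in Theorem~\ref{t.simple.full}, is to push the expansion to depth $P \sim c\log|\log\e|$ despite the fact that constants in the recursion naively grow doubly exponentially in $p$; controlling this requires precise Cauchy-type bounds for the correctors and for $\|U_{p,j+r}\|$ in weighted norms tailored to the Hermite-like decay forced by the quadratic growth of $W$, together with careful accounting of how each application of the branch-dependent Fredholm inverse introduces a factor of $\gamma(\lambda_{0,j})^{-1}$.
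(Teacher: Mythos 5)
Your overall plan---diagonalize~$\mathbb{D}$, rotate to the basis~$U_{0,j+r}=\sum_s e^r_s\phi_{0,j+s}$, then run the recursive two-scale construction branch-by-branch, control the constants via the weighted analyticity norms, and close with the almost-eigenvector estimate and the isolation guaranteed by Theorem~\ref{t.multiple}---is the right outline and agrees with the paper's Section~5.2 and the one-line proof of Theorem~\ref{t.multiple.full}. However, there is a genuine structural gap in the middle of your induction.

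You write that the compatibility condition at order~$\e^p$ ``fixes the eigenspace component of~$U_{p,j+r}$.'' This cannot be correct: the eigenspace component of~$U_p$ does not appear anywhere in the right-hand side of the macroscopic equation~$(\mathcal{L}_0-\lambda_{0,j})U_p = F_{p,r}$, because~$F_{p,r}$ only involves~$U_k$ for~$k\le p-1$. So the Fredholm conditions at order~$p$ cannot determine it. What they actually determine, in addition to~$\mu_{p,j+r}$, is the eigenspace component of~$U_{p-2,j+r}$. The reason for this two-step lag is specific to the problem: one has~$\mu_{1,j+r}=0$ and~$\bba_{2,\alpha,k}$ vanishes for~$|\alpha|\le 1$, so the first place an undetermined eigenspace component of a prior~$U_k$ feeds back into a compatibility condition is through the~$\mu_{2,j+r}\int U_{p-2,j+r}\phi_{0,j+t}$ term. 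The paper encodes this via the decomposition~$U_{K-3,j+s}=\mathring{U}_{K-3,j+s}+\sum_t\alpha_{K-3,s,t}\phi_{0,t}$ and the linear system~\eqref{e.D-mu}, whose solvability (orthogonality of the right side to~$\mathbf{e}_s$, using the symmetry of~$\mathbb{D}$ from Lemma~\ref{l.Dsym}) gives~$\mu_{K-1,j+s}$, while its solution (modulo~$\ker(\mathbb{D}-\mu_{2,j+s})$) gives~$\alpha_{K-3,s,\cdot}$.

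If you try to implement the scheme as you described it---treating the order-$\e^p$ compatibility as determining~$\mu_p$ and the eigenspace part of~$U_p$ simultaneously---the recursion will not close: you will be left at each step with~$N-1$ overdetermined conditions and no unknown for them to constrain, because the quantity you claim they determine is absent from the equations. You need to restructure the induction so that, when defining~$U_{K-1}$, you leave its in-eigenspace part free and retroactively fix the free in-eigenspace part of~$U_{K-3}$. Apart from this, your use of the pseudo-inverse of~$\mathbb{D}-\mu_{2,j+r}I$ on~$\{\mathbf{e}_r\}^\perp$, the cell problems for~$\bchi_{q,m,k,r}$, and the optimization over~$P$ to beat the doubly-exponential growth of constants are all consistent with the paper's treatment (Propositions~\ref{p.exp.bounds} and~\ref{p.exp.bounds.mult}).
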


While the above results have been stated in the periodic case, the analysis extends to the stochastic setting. In that case, we would need to use the optimal quantitative estimates for correctors (see for instance~\cite{AKM,AK} and the references therein), and it would be necessary to stop the expansion after a finite order~$P$ depending on the dimesion~$d$ and the rate of decorrelations of the random coefficient field (since the correctors do not exist after a certain finite order in the random setting).

	\section{Preliminaries}  \label{s.prelim}

	\subsection{The First and Second Order Homogenized Tensors.} \label{ss.abar} We introduce the first and second order correctors, their associated homogenized tensors $\bba, \bba^{(3)},$ and prove a symmetry property of $\bba^{(3)}$ which will play a crucial role in our analysis. These correctors, as well as the first and third homogenized tensors will arise in our infinite order expansion subsequently, however, for the time being we prefer to set some notation that is less heavy (and is well-known) to experts in homogenization. 
	
	For each~$e\in \RR^d$ we let $\chi^1_{e} \in H^1(\TT^d)$ denote the first-order corrector, that is, the unique mean-zero periodic solution of 
	\begin{equation}
		\label{e.first.ord.corr}
		-\nabla \cdot \bha(e +\nabla \chi^1_{e} ) = 0, \quad \quad \langle \chi^1_{e_k} \rangle = 0\,.
	\end{equation}
	The homogenized tensor~$\bba$ is defined by the formula
	\begin{align*}
		\bba e := \langle \bha (e + \nabla \chi^1_{e}) \rangle \,, \quad e\in\RR^d
	\end{align*}
	We let~$\bfg_{e} $ denote the difference between the flux of the corrector and the homogenized flux: 
	\begin{equation*}
		\bfg_{e} := \bha(e + \nabla \chi^1_{e}) - \bba e\,.
	\end{equation*}
	We introduce an associated stream matrix~$\mathbf{s}_e$, which is skew-symmetric and satisfies
	\begin{equation}
		\label{e.divse}
		\nabla \cdot \mathbf{s}_e = \bfg_e
		\qquad \mbox{(in coordinates, $\partial_{x_i} \bfs_{e,ij} = \bfg_{e,j}$)}\,,
	\end{equation}
	and whose $ij$th entry~$\mathbf{s}_{e,ij}$ is defined as the unique mean-zero $H^1(\TT^d)$ solution of
	\begin{equation}
		\label{e.se.eq}
		-\Delta \bfs_{e,ij} = \partial_{x_j} \bfg_{e,i} - \partial_{x_i} \bfg_{e,j}\, \quad \quad \langle \bfs_{e,ij} \rangle = 0\,.
	\end{equation}
	We call~$\mathbf{s}_{e,ij}$ a \emph{flux corrector}. 
	It is clear from~\eqref{e.se.eq} that~$\bfs_{e}$ is skew-symmetric. 
	To check the condition~\eqref{e.divse}, apply~$\partial_{x_i}$ to both sides of~\eqref{e.se.eq}, sum over~$i$, and use the equation~\eqref{e.first.ord.corr} to obtain, in the sense of distributions, 
	\begin{equation}
		-\Delta ( \nabla \cdot \bfs_{e} )_j =
		-\Delta \bfg_{e,j}. 
	\end{equation}
	Since both~$\bfg_{e,j}$ and~$\nabla \cdot \bfs_{e}$ are of zero mean, it follows that they are equal. 
	
	We also introduce the second-order corrector. Later on we will introduce it as a two-tensor valued mean-zero, periodic field $\bchi^{(2)}:$ this means that it is indexed by two indices, say $\{\chi^2_{e_j \otimes e_k}\}_{j,k=1}^d$ defined to be the unique mean-zero $H^1(\TT^d)$ solution to 
	\begin{equation*}
		-\nabla \cdot \a \nabla \chi^{2}_{e_j\otimes e_k}
		=
		\nabla \cdot \bigl( \a e_j \chi^1_{e_k} - \mathbf{s}^1_{e_k} e_j \bigr)\,.
	\end{equation*}
	Introducing the third order homogenized tensor
	\begin{align*}
		\bba^{(3)}_{ijk} := (\a \nabla \chi^2_{e_j\otimes e_k} + \a e_j \chi^1_{e_k} - \bfs^1_{e_k}e_j)_i\,,
	\end{align*}
	the preceding equation asserts that the vector field 
	\begin{align*}
		\a \nabla \chi^2_{e_j \otimes e_k} + \a e_j \chi^1_{e_k} - \bfs^1_{e_k}e_j - \bba^{(3)}_{ijk}e_i
	\end{align*}
	is mean-zero and divergence free; it then follows as before that there exists a skew-symmetric tensor field $\bfs_{e_j \otimes e_k} $ such that 
	\begin{align*}
		(\nabla \cdot \bfs_{e_j\otimes e_k} )_i =  	(\a \nabla \chi^2_{e_j \otimes e_k} + \a e_j \chi^1_{e_k} - \bfs^1_{e_k}e_j )_i - \bba^{(3)}_{ijk}\,.
	\end{align*}
	
	The following lemma collects a fundamental symmetry property of the third-order homogenized tensor $\bba^{(3)}$. 
	
	\begin{lemma}
		\label{l.3rdorder.symm}
		For each $i,j,k \in \{1,\ldots, d\},$ setting 
		\begin{equation*}
			\bba^{(3),s}_{ijk} := \frac{\bba^{(3)}_{ijk} + \bba^{(3)}_{ikj}}{2}\,,
		\end{equation*}
		we have the identity
		\begin{equation}
			\label{e.3symm}
			\bba^{(3),s}_{ijk} + \bba^{(3),s}_{jki} + \bba^{(3),s}_{kij} = 0\,.
		\end{equation}
	\end{lemma}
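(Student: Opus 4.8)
The plan is to first derive a closed form for $\bba^{(3)}$ involving only the first-order correctors and their fluxes, then to reduce \eqref{e.3symm} to the vanishing of the fully symmetrized tensor, and finally to establish that vanishing using self-adjointness. For the closed form I would test the equation defining $\chi^2_{e_j\otimes e_k}$ against the periodic function $\chi^1_{e_i}$, and simultaneously test the first-corrector equation \eqref{e.first.ord.corr} for $\chi^1_{e_i}$ against $\chi^2_{e_j\otimes e_k}$; adding the two relations eliminates $\chi^2$. Using the symmetry of $\a$ to commute it across the inner products, and integrating by parts the flux-corrector term via $\nabla\cdot\bfs^1_{e_k}=\bfg_{e_k}$, one is left with
\begin{equation*}
\bba^{(3)}_{ijk}
= \bigl\langle \chi^1_{e_k}\,\bfg_{e_i,j}\bigr\rangle + \bigl\langle \chi^1_{e_i}\,\bfg_{e_k,j}\bigr\rangle
= \bigl\langle \chi^1_{e_k}\,\bigl(\a(e_i+\nabla\chi^1_{e_i})\bigr)_j\bigr\rangle + \bigl\langle \chi^1_{e_i}\,\bigl(\a(e_k+\nabla\chi^1_{e_k})\bigr)_j\bigr\rangle,
\end{equation*}
where $\langle\chi^1_{e}\rangle=0$ was used to discard the constant $\bba$-part of the flux. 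In particular $\bba^{(3)}$ is already symmetric under interchange of its first and third indices.

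Next, since $\bba^{(3),s}$ is the symmetrization of $\bba^{(3)}$ in its last two slots, the cyclic sum in \eqref{e.3symm} equals $\tfrac12\sum_{\sigma\in S_3}\bba^{(3)}_{\sigma(i)\sigma(j)\sigma(k)}$, so \eqref{e.3symm} is equivalent to the vanishing of the full symmetrization of $\bba^{(3)}$. Writing $P_e:=e+\nabla\chi^1_e$ and $Q_{ijk}:=\langle\chi^1_{e_i}\,\a P_{e_j}\cdot P_{e_k}\rangle$ (symmetric in $j,k$), one substitutes $e_c=P_{e_c}-\nabla\chi^1_{e_c}$ into the formula above to get $\langle\chi^1_{e_a}\bfg_{e_b,c}\rangle=Q_{abc}-\langle\chi^1_{e_a}\,\a P_{e_b}\cdot\nabla\chi^1_{e_c}\rangle$, and integration by parts (using $\nabla\cdot(\a P_{e_b})=0$) turns the last term into $-\langle\chi^1_{e_c}\nabla\chi^1_{e_a}\cdot\a P_{e_b}\rangle$. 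The weak form of \eqref{e.first.ord.corr}, $\langle\nabla(\chi^1_{e_a}\chi^1_{e_b})\cdot\a P_{e_c}\rangle=0$, shows this last object is antisymmetric in the pair $(a,c)$ once $b$ is fixed, hence symmetrizes to zero; therefore $\sum_{\sigma\in S_3}\bba^{(3)}_{\sigma(i)\sigma(j)\sigma(k)}$ is a nonzero multiple of $Q_{ijk}+Q_{jki}+Q_{kij}$, and the lemma is equivalent to
\begin{equation*}
Q_{ijk}+Q_{jki}+Q_{kij}=0.
\end{equation*}

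Proving this cyclic identity is the crux, and it is where self-adjointness must enter: every further integration by parts on $\TT^d$ just reproduces the equivalence above. The cleanest route is spectral. For small $\xi\in\R^d$ let $\lambda(\xi)$ be the lowest eigenvalue of the twisted cell operator $-(\nabla+i\xi)\cdot\a(\nabla+i\xi)$ on $\TT^d$; matching powers of $\xi$ in the eigenvalue equation, with $\chi^1_{e}$, $\chi^2$ and the third corrector arising as the $\xi$-, $\xi^{\otimes 2}$- and $\xi^{\otimes 3}$-coefficients of the Bloch function, gives $\lambda(0)=0$, $\tfrac12\nabla_\xi^2\lambda(0)=\bba$, and that the cubic Taylor coefficient of $\lambda$ at the origin is a fixed nonzero multiple of $\sum_{\sigma\in S_3}\bba^{(3)}_{\sigma(i)\sigma(j)\sigma(k)}\xi_i\xi_j\xi_k$. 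Since $\a$ is real and symmetric, complex conjugation intertwines the twisted operators at $\xi$ and $-\xi$, so $\lambda$ is even in $\xi$; all its odd Taylor coefficients vanish, and the cubic one in particular, which is exactly $Q_{ijk}+Q_{jki}+Q_{kij}=0$. (Conceptually: the $O(\e)$ correction to the homogenized operator is a constant-coefficient third-order operator which is formally self-adjoint, hence zero.) A self-contained alternative is to integrate the pointwise identity $\nabla\cdot\bigl(\a\nabla(\psi_i\psi_j\psi_k)\bigr)=2\bigl(\psi_i\,\a P_{e_j}\cdot P_{e_k}+\psi_j\,\a P_{e_k}\cdot P_{e_i}+\psi_k\,\a P_{e_i}\cdot P_{e_j}\bigr)$, with $\psi_i(x):=x\cdot e_i+\chi^1_{e_i}(x)$, over large cubes and track the boundary contributions, but the lower-order moments there make this route less transparent. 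I expect Step three to be the real obstacle; the first two steps are bookkeeping.
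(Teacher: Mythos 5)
Your Steps 1 and 2 are correct --- the closed form $\bba^{(3)}_{ijk}=\langle\chi^1_{e_k}\bfg_{e_i,j}\rangle+\langle\chi^1_{e_i}\bfg_{e_k,j}\rangle$ checks out, as does the reduction of \eqref{e.3symm} to the cyclic identity $Q_{ijk}+Q_{jki}+Q_{kij}=0$ --- but Step 3 departs from the paper in a genuinely different direction. The paper closes by a direct computation on $\TT^d$: it tests the equation for $\chi^2_{e_j\otimes e_k}$ against $\chi^1_{e_i}$ and the first-corrector equation against $\chi^2$, converts the flux-corrector term via $\nabla\cdot\bfs^1_{e_k}=\bfg_{e_k}$, and obtains an expression for $2\bba^{(3),s}_{ijk}$ in terms of first-order correctors whose cyclic sum over $(i,j,k)$ cancels. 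You instead invoke the evenness of the Bloch dispersion relation $\lambda(\xi)=\lambda(-\xi)$ for the twisted cell operator (a consequence of $\a$ being real symmetric), so that all odd Taylor coefficients of $\lambda$ vanish. This is conceptually illuminating --- it explains \emph{why} odd-order homogenized tensors symmetrize to zero --- and in principle gives a stronger result than the lemma, but it rests on two facts you leave unverified: analytic perturbation theory for the family $-(\nabla+i\xi)\cdot\a(\nabla+i\xi)$, and the identification of the cubic Taylor coefficient of $\lambda$ with (a nonzero multiple of) the symmetrized $\bba^{(3)}$ in the paper's normalization, which itself requires a computation comparable to the one you are trying to avoid. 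Your proposed self-contained alternative (integrating the pointwise identity for $\psi_i\psi_j\psi_k$ over large cubes) does not close as written, because the $\psi$'s grow linearly and the boundary and linear-moment contributions do not disappear, as you acknowledge. Finally, your remark that ``every further integration by parts on $\TT^d$ just reproduces the equivalence'' is too pessimistic: the paper's proof is precisely such a direct manipulation and reaches the conclusion without any Bloch machinery.
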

	\begin{proof}
		Utilizing the equations for the first and second order correctors, along with the skew symmetry of $\bfs^1_{e_k}$, we find 
		\begin{equation*}
			\begin{aligned}
				2\bba^{(3),s}_{ijk}  &=  \langle \a (\nabla \chi^2_{e_j \otimes e_k} + \nabla \chi^2_{e_k \otimes e_j} ) \cdot e_i   \rangle  + \langle \a_{ik}\chi^1_{e_j} + \a_{ij} \chi^1_{e_k} \rangle 		\\
				&= - \langle \nabla \chi^2_{e_j \otimes e_k} + \nabla \chi^2_{e_k \otimes e_j}, \a \nabla \chi^1_{e_i} \rangle + \langle \a_{ik}\chi^1_{e_j} + \a_{ij} \chi^1_{e_k} \rangle\\
				&= \langle -\a_{jm} \partial_{x_m} \chi^1_{e_i} \cdot \chi^1_{e_k} - \a_{km}\partial_{x_m} \chi^1_{e_i} \cdot \chi^1_{e_j} \rangle \\
				&\quad + \langle \a_{jm}\partial_{x_m} \chi^1_{e_k} \chi^1_{e_i} + \a_{km}\partial_{x_m} \chi^1_{e_j}
				\chi^1_{e_i} \rangle 		+ 2 \langle \a_{jk}\chi^1_{e_i} \rangle\,.
			\end{aligned}
		\end{equation*}
		The desired identity follows by cyclically summing over $i,j$ and $k.$ 
	\end{proof}

	\subsection{Zeroth Order Estimates for Eigenvalues} \label{ss.KLS}
	The following proposition, which gives a first estimate for the rate of convergence of the eigenvalues of $\mathcal{L}_\e$ towards those of $\mathcal{L}_0$, is a straightfoward modification of the results in \cite{KLS1}; it forms an ingredient in the proofs of all of our theorems. While the result in that paper lies in the setting of homogenization of the Dirichlet eigenvalues of periodic elliptic operators in bounded domains, the proof, which is based on Courant's minimax characterization of the eigenvalues, adapts readily to our setting.  This basic estimate provides an upper bound for how much eigenvalues of $\mathcal{L}_\e$ can move relative to those of $\mathcal{L}_0.$ We recall that $\{\lambda_{\e,j}\}_{j=1}^\infty$ (resp. $\{\lambda_{0,j}\}_{j=1}^\infty$) denote the eigenvalues of $\mathcal{L}_\e$ (resp. $\mathcal{L}_0$) written in nondecreasing order. For each eigenvalue $\lambda_{0,j}$ of $\mathcal{L}_0$ we recall that $\gamma(\lambda_{0,j})$ denotes the associated the spectral gap of $\lambda_{0,j},$ (see \eqref{e.sgdef} for the definition). The following proposition can be readily proved as in \cite{KLS1}, and we omit the proof. 
	
	\begin{proposition}
		\label{p.eigvalconv}
		There exists~$C_1 > 0$ independent of~$\e $ and~$k$ such that for each~$k \in \NN$ and for every~$\e > 0$ we have 
		\begin{equation}
			\label{e.zeroth}
			|\lambda_{\e,j} - \lambda_{0,j}|\leqslant C_1 \e\lambda_{0,j}^{\sfrac32}\,. 
		\end{equation}
	\end{proposition}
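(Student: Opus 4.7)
The plan is to follow the variational strategy of~\cite{KLS1}, adapted from the Dirichlet setting in bounded domains to the whole-space Schr\"odinger setting. The starting point is the Courant--Fischer min-max characterization
\begin{equation*}
\lambda_{\ep,j}
=
\min_{\substack{V \subset H^1(\R^d) \\ \dim V = j}}
\ \max_{u \in V \setminus \{0\}}
\frac{\int_{\R^d} \bigl( \bha^\ep \nabla u \cdot \nabla u + W u^2 \bigr) \,dx}{\|u\|^2_{L^2(\R^d)}}
\end{equation*}
together with the analogous formula for~$\lambda_{0,j}$ in which~$\bha^\ep$ is replaced by~$\bba$. To obtain the two-sided comparison I would produce test subspaces in each direction: for the upper bound on~$\lambda_{\ep,j}$, use the first~$j$ eigenfunctions~$\phi_{0,1},\ldots,\phi_{0,j}$ of~$\mathcal{L}_0$, corrected by the first-order corrector to form
\begin{equation*}
\Phi_{\ep,k}(x) := \phi_{0,k}(x) + \ep\, \chi^1_{e_i}\bigl(\tfrac{x}{\ep}\bigr)\,\partial_{x_i}\phi_{0,k}(x)\,,\qquad 1\le k\le j\,,
\end{equation*}
and set~$V_\ep := \vsgen\{\Phi_{\ep,1},\ldots,\Phi_{\ep,j}\}$. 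For the reverse inequality, run the symmetric construction starting from the~$\phi_{\ep,k}$'s and back-correcting to produce a~$j$-dimensional test subspace admissible for the Rayleigh quotient of~$\mathcal{L}_0$.

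The heart of the matter is the two-scale error analysis of~$\Phi_{\ep,k}$. Using the corrector equation~\eqref{e.first.ord.corr} and the flux corrector~$\bfs_{e_i}$ from~\eqref{e.divse} to rewrite the divergence-free part of the residual, a direct computation yields
\begin{equation*}
\mathcal{L}_\ep \Phi_{\ep,k} - \lambda_{0,k}\Phi_{\ep,k}
=
\ep\, R_{\ep,k}\,,
\end{equation*}
where~$R_{\ep,k}$ is a sum of terms that, after pairing against an~$H^1(\R^d)$ test function, are controlled by~$\|\phi_{0,k}\|_{H^2(\R^d)}$, the weighted norm~$\|\langle x\rangle \nabla \phi_{0,k}\|_{L^2(\R^d)}$, and~$\|\phi_{0,k}\|_{H^1(\R^d)}$, with prefactors depending only on~$\data$ and the uniform boundedness of~$\chi^1$ and~$\bfs$. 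The Gram matrix of~$\{\Phi_{\ep,k}\}_{k\le j}$ in~$L^2(\R^d)$ differs from the identity by~$O(\ep)$ in operator norm, so inserting the extremal element of~$V_\ep$ into the Rayleigh quotient produces
\begin{equation*}
\lambda_{\ep,j} \le \lambda_{0,j} + C\ep \max_{k\le j}\bigl(\|\phi_{0,k}\|_{H^2} + \|\langle x\rangle \nabla \phi_{0,k}\|_{L^2}\bigr)\, \max_{k\le j}\|\phi_{0,k}\|_{H^1}\,.
\end{equation*}
To convert this into the claimed bound I would invoke the standard Schr\"odinger regularity supplied by~\eqref{h.ell}, \eqref{e.Whypothesis.temp} and~\eqref{e.wgrowth}: the energy identity~$\int(|\nabla\phi_{0,k}|^2 + W \phi_{0,k}^2) = \lambda_{0,k}$ gives~$\|\nabla \phi_{0,k}\|_{L^2} + \|\langle x\rangle \phi_{0,k}\|_{L^2}\lesssim \lambda_{0,k}^{1/2}$, and re-using~$\mathcal{L}_0 \phi_{0,k}=\lambda_{0,k}\phi_{0,k}$ bootstraps to~$\|\nabla^2 \phi_{0,k}\|_{L^2} + \|\langle x\rangle\nabla \phi_{0,k}\|_{L^2}\lesssim \lambda_{0,k}$. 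Multiplying these two bounds together gives exactly~$\ep\lambda_{0,j}^{3/2}$.

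The reverse inequality follows from the symmetric construction; the required regularity estimates for the~$\phi_{\ep,k}$'s hold uniformly in~$\ep$ because~$\bha$ is uniformly elliptic, so the constants depend only on~$\data$, and one can swap~$\lambda_{\ep,j}^{3/2}$ for~$\lambda_{0,j}^{3/2}$ by a short iteration once a weak preliminary bound on~$|\lambda_{\ep,j}-\lambda_{0,j}|$ is known. The main obstacle is making the analysis of~$R_{\ep,k}$ yield the sharp power~$\lambda_{0,j}^{3/2}$ rather than a higher one; what makes this work here, in contrast to a naive bounded-domain estimate, is the weighted-$L^2$ localization provided by the quadratic confinement~\eqref{e.wgrowth}, which precisely absorbs the factor of~$|x|$ that appears when derivatives fall on~$W$ in the expansion of~$\mathcal{L}_\ep\Phi_{\ep,k}$. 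Apart from that point, the argument is a direct transcription of the min-max manipulations of~\cite{KLS1}, with the crucial simplification that no boundary-layer correction is needed on~$\R^d$.
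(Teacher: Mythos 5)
Your proposal follows the same route the paper indicates: the paper omits the proof and simply asserts it is a straightforward adaptation of the Courant minimax argument from~\cite{KLS1}, replacing boundary-layer corrections by the whole-space confinement from the quadratic potential~$W$. Your outline—corrected test subspaces in the min-max, a two-scale residual estimate using~$\bfs_e$ and~$\chi^1_e$, elliptic regularity and weighted Caccioppoli bounds giving~$\|\phi_{0,k}\|_{H^1}\lesssim\lambda_{0,k}^{1/2}$ and~$\|\phi_{0,k}\|_{H^2}+\|\langle x\rangle\nabla\phi_{0,k}\|_{L^2}\lesssim\lambda_{0,k}$—is exactly the content of that citation, so the proposal is essentially the paper's own argument spelled out in slightly more detail.
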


	\section{Estimates for Analyticity} \label{s.analytic}
	\subsection{Elliptic Estimates}
	\label{ss.decay} 
	The goal of this section is to derive exponential decay estimates for eigenfunctions to the operator 
	\begin{align*}
		\mathcal{L}_0U := - \nabla \cdot \bba \nabla U + W(\cdot) U.
	\end{align*}
	Let $\lambda > 0$ be an eigenvalue of $\mathcal{L}_0,$ corresponding to eigenfunction $U,$ with $\|U\|_{L^2} = 1,$ so that 
	\begin{equation}\label{e.evp}
		-\nabla \cdot \bba\nabla U + W U = \lambda U. 
	\end{equation}
	
	\begin{lemma} 
		\label{l.decayests}
		Let $\lambda$ be an eigenvalue of $\mathcal{L}_0,$ and let $f \in C^\infty \cap L^2(\RR^d)$ be such that $\int_{\RR^d} f \phi_\lambda = 0$ for every eigenfunction $\phi_\lambda$ associated to $\lambda.$ There exist $\alpha(\data) > 0,$ $C_1(\data) >0$ and $C(\data) > 0$ such that the following holds: for every solution~$u$ of 
		\begin{align} \label{e.PDE=f.l}
			(\mathcal{L}_0 - \lambda)u = f \quad \mbox{in} \ \RR^d\,,
		\end{align}
		we have the estimate
		\begin{equation} \label{e.cacciopmn}
			\begin{aligned}
				&\int_{\RR^d}(\lambda + |x|^2)^n e^{2H}|\nabla^m u|^2\,dx + \int_{|x| > R_{n,\lambda}} (\lambda + |x|^2)^{n-1} e^{2H} |\nabla^{m-1} u|^2\,dx \\ &\qquad  \lesssim 
				((m-1)!)^2 C_1^{n+m-1}\Lambda^{m-2} \int_{|x| \leqslant R_{n,\lambda}} e^{2H} (\lambda +|x|^2)^{n+m} u^2\,dx \\
				&\qquad \qquad + ((m-1)!)^2 C_1^{m}\Lambda^2 \sum_{\ell = 0}^{m-1} \int_{\RR^d} e^{2H}(\lambda + |x|^2)^{n + m- \ell - 2}|\nabla^\ell f|^2\,dx\,,
			\end{aligned}
		\end{equation}
		where $H: \RR^d \to \RR$ is defined via $H(x) := \alpha |x|^2,$ and 
		\begin{align}
			\label{e.lengthscale-def}
			R_{n,\lambda} := C\max(\sqrt{n},\sqrt{\lambda})
		\end{align}
		
	\end{lemma}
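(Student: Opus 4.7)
The estimate is an Agmon-type weighted energy estimate, proved by induction on $m\geq 1$. Before starting, fix the Gaussian weight parameter $\alpha>0$ depending only on $\data$ so that $4\theta\alpha^2\leq \Lambda_-/8$, and choose the constant $C$ in $R_{n,\lambda}=C\max(\sqrt n,\sqrt\lambda)$ large enough that, on $\{|x|>R_{n,\lambda}\}$, we simultaneously have $W(x)-\lambda\geq \tfrac{\Lambda_-}{4}(\lambda+|x|^2)$ and $n^2\leq |x|^2(\lambda+|x|^2)$. These two deterministic inequalities drive the whole argument: the first supplies coercivity from the potential, and the second ensures that the polynomial weight $(\lambda+|x|^2)^n$ can be differentiated without producing an uncontrollable error outside the ball.

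\textbf{Base case $m=1$.} I would test $(\mathcal{L}_0-\lambda)u=f$ against $u\,e^{2H}(\lambda+|x|^2)^n$. Integration by parts produces $\int|\bba^{1/2}\nabla u|^2 e^{2H}(\lambda+|x|^2)^n + \int (W-\lambda)u^2 e^{2H}(\lambda+|x|^2)^n$ on the left, together with cross terms coming from $\nabla H=2\alpha x$ and $\nabla(\lambda+|x|^2)^n = 2nx(\lambda+|x|^2)^{n-1}$. After a Young inequality that absorbs half of $\int|\bba^{1/2}\nabla u|^2 e^{2H}(\lambda+|x|^2)^n$, the remaining error is bounded by
\begin{equation*}
C\int u^2 e^{2H}\Bigl(\alpha^2|x|^2(\lambda+|x|^2)^n + n^2(\lambda+|x|^2)^{n-1}\Bigr) + C\int |f|^2 e^{2H}(\lambda+|x|^2)^{n-1}.
\end{equation*}
By the choice of $\alpha$ and $R_{n,\lambda}$, on $\{|x|>R_{n,\lambda}\}$ both $u^2$ error terms are absorbed into half of the coercive $(W-\lambda)u^2$-integral, whose remaining half also gives the outside-ball term $\int_{|x|>R_{n,\lambda}}(\lambda+|x|^2)^{n-1}u^2e^{2H}$ on the LHS. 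On $\{|x|\leq R_{n,\lambda}\}$, both error terms are bounded directly using $(\lambda+|x|^2)\leq C(\lambda+n)$ by $C_1^n\int_{|x|\leq R_{n,\lambda}}(\lambda+|x|^2)^{n+1}u^2 e^{2H}$, which matches the claimed RHS form at $m=1$.

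\textbf{Inductive step.} Assume the estimate for all pairs $(n',m')$ with $m'<m$. Because $\bba$ is constant, $-\nabla\cdot\bba\nabla$ commutes with any $\partial^\alpha$, so for a multi-index $\alpha$ with $|\alpha|=m-1$ the Leibniz rule gives
\begin{equation*}
(\mathcal{L}_0-\lambda)\partial^\alpha u = \partial^\alpha f - \sum_{0<\beta\leq\alpha}\binom{\alpha}{\beta}\partial^\beta W\cdot \partial^{\alpha-\beta}u.
\end{equation*}
Apply the base case to $\partial^\alpha u$ with this source, and bound the commutator contribution using the analyticity hypothesis \eqref{e.Whypothesis.temp}: each term is at most $(|\beta|!)^2\Lambda^{2|\beta|}\int e^{2H}(\lambda+|x|^2)^{n+1-|\beta|}|\partial^{\alpha-\beta}u|^2$. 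Since $|\beta|\geq 1$ forces $|\alpha-\beta|\leq m-2$, apply the inductive hypothesis with $(n',m')=(n+1-|\beta|,m-1-|\beta|)$ to bound each such term by ball-$u$ integrals with weight $(\lambda+|x|^2)^{n+m-2|\beta|}\leq(\lambda+|x|^2)^{n+m}$ and by source-$f$ integrals whose weights $(\lambda+|x|^2)^{n'+m'-\ell-2}$ with $\ell\leq m-1-|\beta|$ fall into the range of exponents on the final right-hand side. Summing over $|\alpha|=m-1$ and collapsing $\binom{m-1}{|\beta|}^2(|\beta|!)^2\leq ((m-1)!)^2/((m-1-|\beta|)!)^2$ against the inductively supplied factor $((m-1-|\beta|)!)^2$ yields the advertised prefactor $((m-1)!)^2$.

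\textbf{Main obstacle.} The substance of the argument is routine; the hard part is constant bookkeeping. One must verify that a single choice of $C_1$, depending only on $\data$, absorbs at every inductive level the combinatorial losses from Vandermonde summation, the geometric losses $(\lambda+|x|^2)^{|\beta|}$ from the potential commutators, and the at-most-unit shifts in the polynomial exponent $n$ caused by iterating the base case. One must also argue that the $|\beta|=m-1$ commutator term $\binom{m-1}{m-1}^2((m-1)!)^2\Lambda^{2(m-1)}\int e^{2H}(\lambda+|x|^2)^{n+2-m}u^2$, which does not fall under the inductive hypothesis because $|\alpha-\beta|=0$, is absorbed directly: on $\{|x|\leq R_{n,\lambda}\}$ it is bounded by the ball term $C_1^{n+m-1}\Lambda^{m-2}\int(\lambda+|x|^2)^{n+m}u^2 e^{2H}$ provided $C_1$ is large, and on $\{|x|>R_{n,\lambda}\}$ it is controlled using the outside-ball LHS term produced at the $m'=1$ level of the iteration. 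Once these constants are tracked uniformly in $(n,m,\lambda)$, the induction closes in the stated form.
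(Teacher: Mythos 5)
Your proposal is correct and follows essentially the same strategy as the paper's proof: a weighted Caccioppoli estimate for the base case (testing against $u\,e^{2H}(\lambda+|x|^2)^n$, with $\alpha$ tuned to the lower ellipticity constant of $W$ and $R_{n,\lambda}$ tuned so the potential dominates both the Gaussian error and the factor $n^2$ outside the ball), followed by differentiating the equation, treating the commutator with $W$ via Leibniz and hypothesis \eqref{e.Whypothesis.temp}, and closing the induction. Your flag about the $|\beta|=|\alpha|$ commutator term that hits $u$ directly, and thus falls outside the inductive hypothesis, is a fair point — the paper's display bounding $F_\alpha$ is somewhat loose on exactly this term (the prefactor $((m-j-1)!)^2$ is formally undefined at $j=m$) — but it is handled exactly as you say, by absorbing it into the ball term on the right, so there is no gap in substance.
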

	\begin{proof}
		The proof is an induction argument on $m \in \mathbb{N}.$ 
		
		\smallskip
		
		\emph{The base case.} Let $\eta \in C^\infty_c(\RR^d)$ be a smooth function. We multiply ~\eqref{e.PDE=f.l} with ~$\eta^2 u$ and integrate on ~$\RR^d$ to arrive at 
		\begin{align*}
			\int_{\RR^d} \eta^2\bba \nabla u \cdot \nabla u \,dx + \int_{\RR^d} W(x) \eta^2 u^2\,dx = \lambda \int_{\RR^d}\eta^2 u^2\,dx  + \int_{\RR^d} \eta^2 fu\,dx - 2\int_{\RR^d} \eta u \bba \nabla u \cdot \nabla \eta\,. 
		\end{align*}
		Using ellipticity of $\bba,$ and by Cauchy-Schwarz, we obtain 
		\begin{align} \label{e.cacciop0}
			\int_{\RR^d}\eta^2 |\nabla u|^2 \,dx \lesssim \int_{\RR^d}\eta^2 u^2 (\lambda - W(x) ) + C_0 u^2 |\nabla \eta|^2  \,dx + \int_{\RR^d} \eta^2 f u\,dx, 
		\end{align}
		for some universal constant $C_0$ (for instance we can take $C_0 = 8$). 
		We use test functions of the form $\eta = e^H \chi$ for various choices of  $\chi \in C^\infty_c(\RR^d)$. The function ~$H$ will be chosen of the form ~$H (x):= \alpha |x|^2$ for some small $\alpha = \alpha(\data) > 0.$ Inserting this choice in the prior display we find, for any~$\theta > 0$,
		\begin{align*}
			&\int_{\RR^d}\chi^2 e^{2H}|\nabla u|^2  \,dx +\int_{\RR^d} \chi^2 e^{2H} u^2 \left( W(x) - \frac{C_0}{2}|\nabla H|^2  - \lambda\right) \,dx \\
			&\quad \lesssim    \int_{\RR^d}  e^{2H}u^2(  |\nabla \chi|^2 +  \chi^2 |\nabla H|^2 ) \,dx  +  \int_{\RR^d} e^{2H}\chi^2 \bigl( \frac{f^2}{\theta} +\theta u^2\bigr) \,dx\, .
		\end{align*}
		We thus arrive at the main Caccioppoli estimate that we use for the rest of the proof:  
		\begin{multline}\label{e.cacciop.main}
			\int_{\RR^d}\chi^2 e^{2H}|\nabla u|^2  \,dx +\int_{\RR^d} \chi^2 e^{2H} u^2 \Bigl( W(x) - \frac{C_0}{2}|\nabla H|^2  - (\lambda+\theta)\Bigr)_+ 
			\\
			\lesssim  \int_{\RR^d} e^{2H}\bigl(u^2 |\nabla \chi|^2 + \frac{f^2}{\theta} \chi^2\bigr) \,dx\,.
		\end{multline}
		Here, $(z)_+$ denotes the positive part of $z \in \RR.$ From \eqref{e.wgrowth} we find that 
		\begin{align*}
			W(x) - C|\nabla H|^2 - (\lambda+\theta) \leqslant \Lambda_+ |x|^2 -  \lambda \leqslant 0,
		\end{align*}
		for all $\theta > 0$ if $|x| \leqslant \sqrt{\frac{\lambda}{\Lambda_+}} =: R_0(\lambda).$ In particular, the second term of \eqref{e.cacciop.main} on the left hand side does not contribute on this ball centered at the origin. 
		
		We now make a choice of the $\theta,$ of the test function $\chi,$ and of the exponential weight $H$ toward obtaining the desired estimate.  
		\begin{itemize}
			\item We set $\theta = \frac{\Lambda_-}{8}(\lambda + |x|^2),$
			\item we set $H(x) := \alpha |x|^2$ for $\alpha$ sufficiently small so that $\frac{\Lambda_- }{2} \geqslant 2 C_0 \alpha^2,$ and 
			\item we set $\chi(x) = (\lambda + |x|^2)^{\sfrac{n}{2}} \omega_R(x)$ for $\omega_R \in C^\infty(\RR^d)$ with $\omega_R(x) \equiv 1$ for $|x| \leqslant R, \omega_R(x) \equiv 0$ for $|x| \geqslant 2R$ and $|\nabla \omega_R(x)| \leqslant \frac{1}{R},$ with $|\omega_R(x)| \leqslant 1$ for all $x \in \RR^d.$ Here, $R \gg 1 $ is a parameter that we will send to infinity at the end of the argument of the base case. 
		\end{itemize}
		Then, 
		\begin{align*}
			|\nabla \chi|^2 \lesssim \frac{n^2(\lambda + |x|^2)^{n-1}}{2} + \frac{(\lambda +|x|^2)^n}{R^2}\,. 
		\end{align*}
		Inserting these choices in \eqref{e.cacciop.main} we find 
		\begin{equation} \label{e.basecase.1}
			\begin{aligned}
				&\int_{\RR^d} (\lambda +|x|^2)^n \omega_R^2(x) e^{2H}|\nabla u|^2 \,dx \\
				&\quad + \int_{\RR^d}  (\lambda + |x|^2)^n \omega_R^2(x) e^{2H} u^2 \Bigl( W(x) - \frac{C_0}{2}|\nabla H|^2 - ( \lambda + \frac{\Lambda_-}{8}(\lambda + |x|^2))\Bigr)_+ \,dx\\
				&\quad \leqslant \int_{\RR^d} n^2(\lambda +|x|^2)^{n-1} e^{2H}\Bigl( u^2 + \frac{f^2}{n^2}\Bigr)\,dx + \frac{1}{R^2}\int_{\RR^d} (\lambda+|x|^2)^n e^{2H}u^2\,dx \,. 
			\end{aligned}
		\end{equation}
		Now, if $\tfrac{\Lambda_-}{4}|x|^2 \geqslant \frac{\Lambda_-}{4}\lambda + \Lambda_0 + \lambda$, which we rewrite as $|x| \geqslant R_1(\lambda) \sim \sqrt{\lambda} > R_0(\lambda),$ then choosing $\alpha > 0$ small enough so $\frac{\Lambda_-}{2} \geqslant 2C_0 \alpha^2$, a short calculation shows
		\begin{equation*}
			\begin{aligned}
				W(x) - \frac{C_0}{2}|\nabla H|^2 - \Bigl(\lambda + \frac{\Lambda_-}{8} (\lambda +|x|^2)\Bigr) &\geqslant \Lambda_- |x|^2 - \Lambda_0 - 2 C_0 \alpha^2 |x|^2 - \lambda - \frac{\Lambda_-}{8}(\lambda +|x|^2)\\
				&\geqslant \frac{\Lambda_-}{8}(\lambda +|x|^2) \,. 
			\end{aligned}
		\end{equation*}
		For any $R_{n,\lambda} > R_1(\lambda)$ (a specific choice will be made presently), the estimate ~\eqref{e.basecase.1} then rewrites as 
		\begin{align*}
			&\int_{\RR^d} (\lambda {+}|x|^2)^n \omega_R^2 e^{2H}|\nabla u|^2\,dx 
			+ \int_{|x| \geqslant R_{n,\lambda}} (\lambda {+}|x|^2)^{n-1}\biggl(\frac{\Lambda_-}{8} (\lambda {+}|x|^2)^2 - n^2\biggr) e^{2H} u^2 \omega_R^2(x)\,dx 
			\\ &\quad 
			\lesssim 
			n^2 \int_{|x| \leqslant R_{n,\lambda}} (\lambda {+}|x|^2)^{n-1}e^{2H} u^2 \,dx + \int_{\RR^d} (\lambda {+}|x|^2)^{n-1} e^{2H} f^2\,dx
			\\ & \quad \qquad 
			+ \frac{1}{R^2}\int_{\RR^d} (\lambda{+}|x|^2)^n e^{2H}u^2\,dx \,. 
		\end{align*}
		Choosing $R_{n,\lambda} > R_1(\lambda)$ so that $\frac{\Lambda_-}{16} (\lambda +R_{n,\lambda}^2)^2 \geqslant n^2,$ and then sending $R \to \infty,$ we  obtain
		\begin{align} \label{e.basecasefinal}
			& \int_{\RR^d} (\lambda +|x|^2)^n  e^{2H}|\nabla u|^2\,dx + \frac{\Lambda_-}{16} \int_{|x| \geqslant R_{n,\lambda}} (\lambda + |x|^2)^{n-1} e^{2H} u^2 \,dx \notag \\ & \qquad  
			\lesssim
			n^2 \int_{|x| \leqslant R_{n,\lambda}} (\lambda + |x|^2)^{n-1} e^{2H} u^2 + \int_{\RR^d}(\lambda + |x|^2)^{n-1} e^{2H} f^2\,dx\,.
		\end{align}
		holding for every $n \in \ZZ.$ This completes the base case. We note that $R_{n,\lambda}$ satisfies 
		\begin{align*}
			R_{n,\lambda} > R_1(\lambda) = C\sqrt{\lambda}, \quad \mbox{ and } R_{n,\lambda} > C \sqrt{n}\,.
		\end{align*}
		
		\smallskip
		
		\emph{The induction hypothesis.}
		Suppose that, for each $p \in \{ 1, \cdots, m\},$ we have shown that for every $n \in \NN_0,$ there holds 
		\begin{align}
			\label{e.indhyp}
			\int_{\RR^d} (\lambda {+} |x|^2)^{n} e^{2H}|\nabla^p u|^2 \,dx 
			& 
			\lesssim 
			C_1^{n+p-1} \Lambda^{p-1} ((p{-}1)!)^2\int_{|x| \leqslant R_{n,\lambda}}(\lambda {+} |x|^2)^{n+p}e^{2H}u^2\,dx 
			\notag \\ &
			\quad 
			+ C_1^{p}\Lambda^2 ((p{-}1)!)^2\sum_{j=0}^{p-1} \int_{\RR^d}e^{2H} (\lambda {+} |x|^2)^{n+(p-j)-1}|\nabla^j f|^2\,dx  .
		\end{align}
		
		\smallskip
		
		\emph{The induction step.} Let $\alpha \in \mathbb{N}_0^{m}$ denote a multiindex of length $m.$ As $f \in C^\infty(\RR^d),$ and $\bba$ is a constant matrix, it follows that $u \in C^\infty (\RR^d).$ Setting $v := \partial^\alpha u,$ we apply~$\partial^\alpha$ derivative of~\eqref{e.PDE=f.l} to find that $v$ satisfies the PDE 
		\begin{align}
			\label{e.PDE=fm+1}
			-\nabla \cdot \bba \nabla v + W(x) v = \lambda v +\partial^\alpha f - f_\alpha =: \lambda v + F_\alpha,
		\end{align}
		where, by Leibniz rule, and using \eqref{e.Whypothesis.temp}, $f_\alpha$ satisfies the bound 
		\begin{align*}
			|f_\alpha| &\leqslant \sum_{j=0}^{m} \binom{m}{j} |\partial^j W(x)| |\partial^{m-j} u|
			\leqslant \sum_{j=0}^m \binom{m}{j} j! \Lambda^j (1+|x|^2)^{\frac{1}{2}(2-j)} |\nabla^{m-j}u|\,,
		\end{align*}
		where we used \eqref{e.Whypothesis.temp} in the second inequality above. Applying the base case to \eqref{e.PDE=fm+1} we obtain 
		\begin{equation} \label{e.induct.step}
			\begin{aligned}
				& \int_{\RR^d}(\lambda +|x|^2)^n e^{2H}|\nabla^{m+1}u|^2 \,dx + \int_{|x| \geqslant R_{n,\lambda}} (\lambda +|x|^2)^{n-1} e^{2H} |\nabla^m u|^2\,dx \\  &\quad \lesssim C_1^n \int_{|x| \leqslant R_{n,\lambda}} (\lambda+|x|^2)^{n+1}e^{2H}|\nabla^{m}u|^2\,dx  + \int_{\RR^d}(\lambda +|x|^2)^{n-1}e^{2H}|F_\alpha|^2\,dx .
			\end{aligned}
		\end{equation}
		To complete the induction step, we must estimate the second term. By Cauchy-Schwarz, since $(a_0 + \ldots + a_m)^2 \leqslant (m+1)(a_0^2 + \ldots + a_m^2)$ for any $(m+1)$ positive numbers $a_0,\ldots , a_m$, we find by the induction hypothesis \eqref{e.indhyp}, 
		\begin{align}
			\label{e.Falpha}
			\lefteqn{
				\int_{\RR^d}(\lambda +|x|^2)^{n-1} e^{2H}|F_\alpha|^2\,dx
			} \quad & 
			\notag \\ & 
			\lesssim  \int_{\RR^d}(\lambda +|x|^2)^{n-1} e^{2H}(|\nabla^m f|^2 + |f_\alpha|^2)\,dx  
			\notag \\ & 
			\leq  \int_{\RR^d} e^{2H} (\lambda+|x|^2)^{n-1}|\nabla^m f|^2\,dx 
			\notag \\ & \quad
			+ (m+1)  \sum_{j=0}^m\binom{m}{j}^2(j!)^2\Lambda^{2j} \int_{\RR^d}  (\lambda + |x|^2)^{n-1} (1+|x|^2)^{2-j} e^{2H}|\nabla^{m-j}u|^2\,dx
			\notag \\ & 
			\leq  \int_{\RR^d}e^{2H}(\lambda +|x|^2)^{n-1} e^{2H}|\nabla^m f|^2\,dx    
			\notag \\ & \quad 
			+ (m+1) \biggl(  C_1^{n+m-1}\Lambda^{m-1}((m-1)!)^2\int_{|x| \leqslant R_{n,\lambda}}e^{2H} (\lambda+|x|^2)^{n+m+1} u^2 \,dx 
			\notag \\ & \quad + C_1^m \Lambda^2((m-1)!)^2 \sum_{\ell=0}^{m-1} \int_{\RR^d} e^{2H} (\lambda+|x|^2)^{n-1+(m-\ell)-1}|\nabla^\ell f|^2\,dx \biggr)  
			\notag \\ & \quad
			+ \frac{(m-1)^2m^2}{4} \biggl( \Lambda C_1^{n+m-1} \Lambda^{m-2} (m-2)!^2 \int_{|x| \leqslant R_{n,\lambda}} e^{2H}(\lambda +|x|^2)^{n+m-1}u^2\,dx 
			\notag \\ & \quad 
			+ C_1^{m-1} \Lambda^2(m-2)!^2 \sum_{\ell = 0}^{m-2} \int_{\RR^d} e^{2H} (\lambda +|x|^2)^{n+(m-1-\ell) - 1}|\nabla^\ell f|^2\,dx  \biggr) 
			\notag \\ & \quad
			+ \sum_{j=2}^m \binom{m}{j}^2  \! j!^2 \Lambda^j  \biggl( C_1^{n-1+m-j}\Lambda^{m-j-1} ((m{-}j{-}1)!)^2 \int_{|x| \leqslant R_{n,\lambda}} \!\! (\lambda{+}|x|^2)^{n-1+m-j} e^{2H} u^2\,dx 
			\notag \\ & \quad
			+ C_1^{m-j} \Lambda^2 ((m{-}j{-}1)!)^2\sum_{\ell = 0}^{m-j-1} \int_{\RR^d} e^{2H} (\lambda{+}|x|^2)^{n-1+(m-j-\ell) - 1} |\nabla^\ell f|^2\,dx\biggr)
			\notag \\ &
			\leq  
			m!^2 C_1^{n+m}\Lambda^{m-1} \int_{|x| \leq  R_{n,\lambda}} e^{2H} (\lambda {+}|x|^2)^{n+m+1} u^2\,dx 
			\notag \\ & \quad
			+ m!^2 C_1^{m+1}\Lambda^2 \sum_{\ell = 0}^m \int_{\RR^d} e^{2H}(\lambda {+} |x|^2)^{n + m - \ell - 1}|\nabla^\ell f|^2\,dx. 
		\end{align}
		In the last line, we repeatedly used the fact that $\sum_{j=1}^q t^j \sim t^{q+1}$ with the choice $t = (\lambda + |x|^2)$ to combine the various terms. The proof of the induction step, and therefore that of the lemma, is complete.
	\end{proof}
	Our next lemma concerns $L^2$ estimates for eigenfunctions, i.e., equations of \eqref{e.PDE=f.l} with $f = 0$ with the weight $e^{2H}.$ To this end, we let $\phi_\lambda$, as before, denote an $L^2-$ normalized eigenfunction of $\mathcal{L}_0,$ so that 
	\begin{equation}
		\label{e.L2eigf}
		(\mathcal{L}_0 - \lambda)\phi_\lambda = 0, \quad \quad \mbox{ in } \RR^d, \, \|\phi_\lambda\|_{L^2(\RR^d)} = 1.
	\end{equation}
	\begin{corollary} \label{l.L2decay.eigf}
		Let $\phi_\lambda$ be an eigenfunction of $\mathcal{L}_0$ with eigenvalue $\lambda,$ normalized as in \eqref{e.L2eigf}. Then there exists $c_2(\data) > 0,$ such that
		\begin{align*}
			\int_{\RR^d \setminus B_{R_{1,\lambda}}} \phi_\lambda^2 e^{2H}\,dx \lesssim e^{2c\lambda}\lambda^2\,.
		\end{align*}
	\end{corollary}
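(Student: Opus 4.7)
The plan is to apply the base case of the preceding lemma directly: set $u = \phi_\lambda$ and $f \equiv 0$ in Lemma~\ref{l.decayests}, and take $m = 1$, $n = 1$. The orthogonality hypothesis $\int_{\RR^d} f\,\phi_\lambda \,dx = 0$ is vacuous since $f = 0$, and the eigenvalue equation \eqref{e.L2eigf} places $\phi_\lambda$ in the setting of \eqref{e.PDE=f.l}. With these parameters, the $f$-dependent term on the right of \eqref{e.cacciopmn} vanishes, while the second term on the left is exactly the quantity
\begin{equation*}
\int_{|x| > R_{1,\lambda}} e^{2H}\phi_\lambda^2\,dx
\end{equation*}
that we want to control.

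After discarding the nonnegative gradient term on the left of \eqref{e.cacciopmn}, the estimate reduces to bounding
\begin{equation*}
\int_{|x| \leq R_{1,\lambda}} e^{2H(x)}(\lambda + |x|^2)^2 \phi_\lambda^2\,dx
\end{equation*}
by a data-dependent constant times $e^{2c\lambda}\lambda^2$. This is where the length scale $R_{1,\lambda}$ plays its role: by \eqref{e.lengthscale-def} we have $R_{1,\lambda} \lesssim \sqrt{\lambda}$ (absorbing any $O(1)$ contribution into $c$), so on the ball $\{|x| \leq R_{1,\lambda}\}$ one has $(\lambda + |x|^2)^2 \leq 4\lambda^2$ and the Gaussian weight obeys $e^{2H(x)} = e^{2\alpha|x|^2} \leq e^{2\alpha R_{1,\lambda}^2} \leq e^{2c\lambda}$ for some $c = c(\data)$.

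Combining these pointwise bounds with the $L^2$-normalization $\|\phi_\lambda\|_{L^2(\RR^d)} = 1$ then yields
\begin{equation*}
\int_{|x| \leq R_{1,\lambda}} e^{2H}(\lambda+|x|^2)^2\phi_\lambda^2\,dx
\;\lesssim\; \lambda^2 e^{2c\lambda} \int_{|x|\leq R_{1,\lambda}} \phi_\lambda^2\,dx
\;\leq\; \lambda^2 e^{2c\lambda},
\end{equation*}
which is the claim. There is no substantive obstacle here: the corollary is really just a bookkeeping consequence of the $m = 1$ case of Lemma~\ref{l.decayests}, and the only point worth checking carefully is that the radius in \eqref{e.lengthscale-def} scales like $\sqrt\lambda$, so that the Gaussian weight on the ``bad'' ball costs only $e^{c\lambda}$ rather than something worse.
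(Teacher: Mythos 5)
Your proof is correct and follows exactly the same route as the paper: set $f \equiv 0$, $m = n = 1$ in Lemma~\ref{l.decayests}, drop the gradient term on the left, and bound the right side on the ball $\{|x| \leq R_{1,\lambda}\}$ using $R_{1,\lambda} \sim \sqrt{\lambda}$ and $\|\phi_\lambda\|_{L^2} = 1$. The only cosmetic point is that $(\lambda + |x|^2)^2 \leq 4\lambda^2$ need not hold literally if the constant $C$ in \eqref{e.lengthscale-def} exceeds one, but as you note the resulting $O(1)$ factor is absorbed into the implicit constant and into $c$.
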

	\begin{proof}
		We set $f \equiv 0$ in \eqref{e.cacciopmn}, along with the choice $n = 1$ and $m = 1.$ This yields 
		\begin{equation*}
			\int_{|x| > R_{1,\lambda}} e^{2H}\phi_\lambda^2 \,dx  \lesssim \int_{|x| < R_{1,\lambda}} e^{2H} (\lambda +|x|^2)^2 \phi_\lambda ^2\,dx \lesssim e^{2c \lambda} \lambda^2 \int_{\RR^d} \phi_\lambda^2 \,dx = e^{2c\lambda}\lambda^2\,.  \qedhere
		\end{equation*}
	\end{proof}

	\subsection{Spectral Estimates}

	Next we turn our attention to estimates on the problem 
	\begin{equation}
		\label{e.probf}
		(\mathcal{L}_0 - \lambda) V = f,
	\end{equation}
	where $\lambda \in \sigma(\mathcal{L}_0)$ is a given eigenvalue with associated normalized eigenfunction $\phi_\lambda$, which we assume simple for the time being, and $f \in C^\infty \cap L^2(\RR^d)$ such that 
	\begin{equation*}
		\int_{\RR^d} f\phi_\lambda \,dx = 0.
	\end{equation*}
	Here, and in what follows, we use $\sigma(\mathcal{L}_0)$ to denote the spectrum of $\mathcal{L}_0.$
	Motivated by the decay rates proven in Lemma \ref{l.decayests}, it is natural to measure the regularity of $f$ using weighted spaces with inverse Gaussian weights. 
	Setting $\Lambda_2 := \sqrt{\Lambda},$ we define, for any  $\lambda \in \sigma(\mathcal{L}_0)$, for any $g \in L^2 \cap C^\infty(\RR^d),$ 
	\begin{align}
		\label{e.weightnorm.2}
		\lefteqn{ 
			\nnn g\nnn_{\lambda,\Theta} 
		} \quad & \notag \\ & 
		:= 
		\sup_{n \in \NN}\sup_{m \in \NN}\frac{1}{\Theta^{n+m}\Lambda_2^m n! (m-1)!}\left(\int_{\RR^d} (\lambda +|x|^2)^{n} |\nabla^m g(x)|^2  \exp\bigl(\alpha|x|^2 {-} 2\alpha R_{n,\lambda}^2\bigr)_+ \,dx\right)^{\!\!\sfrac 12} \!\! . 
	\end{align}
	Here, we recall that the lengthscale $R_{n,\lambda} \sim \sqrt{n}$ is defined in \eqref{e.lengthscale-def}. 
	
	\smallskip
	We next give an analyticity estimate for solutions of~\eqref{e.probf}. 
	Recall that $\gamma(\lambda) > 0$ is the spectral gap, $\gamma(\lambda) := \inf\{|\lambda - \mu|: \mu \in \sigma(\mathcal{L}_0) \setminus \{\lambda\}\}.$
	
	\begin{lemma}
		\label{l.reg}
		Let $\lambda$ be an eigenvalue of $\mathcal{L}_0$ and $f \in C^\infty \cap L^2(\RR^d)$ be such that $\int_{\RR^d} f \phi_\lambda = 0$ for every eigenfunction $\phi_\lambda$ associated to $\lambda.$ 
		Then, for every solution~$u$ of 
		\begin{align*} 
			(\mathcal{L}_0 - \lambda)u = f \quad \mbox{in} \ \RR^d\,,
		\end{align*}
		and for every $\Theta \geqslant C_1$, we have the estimate
		\begin{equation*}
			\nnn u\nnn_{\lambda,\Theta} \leqslant \frac{1}{\gamma(\lambda)} \nnn f\nnn_{\lambda,\Theta} \,.
		\end{equation*}
	\end{lemma}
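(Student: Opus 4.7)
The plan is to combine an $L^2$ spectral-gap inequality with the weighted Caccioppoli estimate of Lemma \ref{l.decayests}, followed by a combinatorial bookkeeping argument matching the normalization in $\nnn \cdot \nnn_{\lambda,\Theta}$. The $L^2$ spectral-gap inequality will supply the factor $\gamma(\lambda)^{-1}$ on the right-hand side, the Caccioppoli estimate will upgrade this baseline to the full weighted norm, and the bookkeeping will ensure that all subleading contributions are absorbed when $\Theta \geqslant C_1$ is chosen sufficiently large.

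First I would establish the base $L^2$ estimate. Since $f$ is orthogonal in $L^2(\RR^d)$ to $\ker(\mathcal{L}_0 - \lambda)$, and adding an element of that kernel to $u$ does not change the equation, I may assume $u \perp \ker(\mathcal{L}_0 - \lambda)$. The spectral theorem for the self-adjoint operator $\mathcal{L}_0$ with compact resolvent then gives
\begin{equation*}
	\|u\|_{L^2(\RR^d)} \leqslant \gamma(\lambda)^{-1}\|f\|_{L^2(\RR^d)}.
\end{equation*}
This is the only place the spectral gap enters; everything that follows is a deterministic estimate on weighted derivative norms.

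Next, for fixed $n,m\in\NN$, I would estimate the weighted integral of $|\nabla^m u|^2$ defining a single term in $\nnn u\nnn_{\lambda,\Theta}$. Because $(\alpha|x|^2 - 2\alpha R_{n,\lambda}^2)_+ \leqslant 2H(x)$ pointwise, the weight in the norm is dominated by $e^{2H}$, so \eqref{e.cacciopmn} bounds this integral by a local $L^2$ term of $u$ on $B_{R_{n,\lambda}}$ plus a sum of $\nabla^\ell f$ integrals for $\ell \in \{0,\ldots,m-1\}$. On $B_{R_{n,\lambda}}$ both weights are uniformly bounded, namely $e^{2H} \leqslant e^{2\alpha R_{n,\lambda}^2}$ and $(\lambda+|x|^2)^{n+m}\leqslant (\lambda+R_{n,\lambda}^2)^{n+m}$; with $R_{n,\lambda}^2\lesssim n+\lambda$ and Stirling's formula, this collapses to a prefactor of the form $C^{n+m}(n!)^2\lambda^m$ multiplying $\|u\|_{L^2}^2$. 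Combined with the base $L^2$ bound and the estimate $\|f\|_{L^2}\lesssim \nnn f\nnn_{\lambda,\Theta}$ (obtained from the norm definition at $m=1$ together with a weighted Poincar\'e inequality using the Gaussian tail), the local contribution is controlled by $\gamma(\lambda)^{-2}\Theta^{2(n+m)}\Lambda_2^{2m}(n!)^2((m-1)!)^2\nnn f\nnn^2_{\lambda,\Theta}$ for $\Theta\geqslant C_1$ sufficiently large. Each remaining $\nabla^\ell f$ integral is estimated directly by the norm at indices $(n+m-\ell-2,\ell)$, after swapping $e^{2H}$ for the norm's capped weight at the cost of an $e^{CR_{n+m-\ell-2,\lambda}^2}$ factor absorbed into a power of $\Theta$; summing over $\ell$ produces a geometric series in $\Theta^{-2}$ whose total contribution is strictly lower order than the local term.

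Dividing through by $\Theta^{n+m}\Lambda_2^m n!(m-1)!$ and taking the supremum in $(n,m)$ yields the claimed bound $\nnn u\nnn_{\lambda,\Theta}\leqslant \gamma(\lambda)^{-1}\nnn f\nnn_{\lambda,\Theta}$. The principal obstacle is the Stirling-type bookkeeping in the local-$u$ term: the mixed-weight factor $(n+\lambda)^{n+m}e^{2\alpha R_{n,\lambda}^2}$ must collapse into a clean constant times $\Theta^{2(n+m)}\Lambda_2^{2m}(n!)^2$, and the geometric summation in $\ell$ must swallow the tail factor $e^{2\alpha(n+\lambda)}$ via a sufficiently large choice of $\Theta$. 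Once the constants are arranged so that all these terms compound cleanly, the bound follows.
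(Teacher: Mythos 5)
Your overall strategy is the same as the paper's: reduce to the scalar $L^2$ estimate $\|u\|_{L^2}\leqslant\gamma(\lambda)^{-1}\|f\|_{L^2}$ via the spectral theorem, and then bootstrap to the full weighted norm $\nnn\cdot\nnn_{\lambda,\Theta}$ using the Caccioppoli--analyticity estimate of Lemma~\ref{l.decayests}. That much is correct.

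However, there is a genuine gap in the way you propose to run the bootstrap, and it is precisely the step that the paper handles differently. You pass from the norm's truncated weight $\exp\bigl((\alpha|x|^2 - 2\alpha R_{n,\lambda}^2)_+\bigr)$ directly to the untruncated weight $e^{2H}$, apply \eqref{e.cacciopmn}, and then try to absorb the resulting $e^{2\alpha R_{n,\lambda}^2}$ factor (from bounding $e^{2H}$ on $B_{R_{n,\lambda}}$) into a power of $\Theta$. You acknowledge this as ``the principal obstacle,'' but the proposed fix cannot work: by \eqref{e.lengthscale-def} one has $R_{n,\lambda}^2 \sim \max(n,\lambda)$, so when $\lambda > n$ the uncancelled factor is $e^{C\lambda}$. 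This quantity has no power of $n$ or $m$ attached, so it cannot be dominated by $\Theta^{2(n+m)}$ for any $\Theta$ that is independent of $\lambda$ --- yet the lemma's conclusion is required to hold for a fixed $\Theta \geqslant C_1$ with $C_1 = C_1(\data)$, and the spectral-gap factor $\gamma(\lambda)^{-1}$ on the right is the only permitted $\lambda$-dependence.

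The paper avoids ever producing this factor. Before applying \eqref{e.cacciopmn}, it splits the integral at $|x| = R_{n,\lambda}$ and writes
\begin{equation*}
\int_{\RR^d}(\lambda + |x|^2)^n |\nabla^m u|^2 \exp\bigl((\alpha|x|^2 - 2\alpha R_{n,\lambda}^2)_+\bigr)\,dx
\leqslant
\int_{|x|\leqslant R_{n,\lambda}}(\cdots)\,dx
\;+\;
e^{-2\alpha R_{n,\lambda}^2}\!\!\int_{|x|\geqslant R_{n,\lambda}}(\cdots)\,e^{2H}\,dx\,,
\end{equation*}
and then applies the Caccioppoli estimate only to the second integral, so that the $e^{-2\alpha R_{n,\lambda}^2}$ prefactor cancels exactly against the $e^{+2\alpha R_{n,\lambda}^2}$ arising from the inner-ball $u^2$-term in \eqref{e.cacciopmn}. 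Without this decomposition and the ensuing cancellation, your bookkeeping does not close, and you cannot reach the claimed constant-free bound $\nnn u\nnn_{\lambda,\Theta}\leqslant\gamma(\lambda)^{-1}\nnn f\nnn_{\lambda,\Theta}$. To repair the argument, you should incorporate the domain split as above, so that the exponential weight factors cancel identically before any absorption into $\Theta$ is attempted.
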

	\begin{proof} The proof proceeds by combining spectral information with Lemma \ref{l.decayests}.
		
		\smallskip
		
		\emph{Step 1.} In this step we obtain the spectral solution formula for $u.$ 
		As $\int f\phi_\lambda = 0$ for each eigenfunction $\phi_\lambda$ associated with the eigenvalue $\lambda,$ it follows that 
		\begin{equation*}
			(\mathcal{L}_0 - \lambda) u = f
		\end{equation*}
		admits a unique solution satisfying the normalization condition $\int u\phi_\lambda = 0$ for each eigenfunction $\phi_\lambda$ associated with eigenvalue $\lambda \in \sigma(\mathcal{L}_0).$ It follows therefore, that if 
		\begin{equation*}
			f := \sum_{\mu \in \sigma(\mathcal{L}_0) \setminus \{\lambda\}} f_\mu \phi_\mu, \quad \quad f_\mu := \int f \phi_\mu. 
		\end{equation*}
		then 
		\begin{equation*}
			u = \sum_{\mu \in \sigma(\mathcal{L}_0) \setminus \{\lambda\}} \frac{f_\mu}{\mu-\lambda}\phi_\mu. 
		\end{equation*}

		\emph{Step 2.} By Lemma \ref{l.decayests}, for any $m \in \NN,$ we obtain that for any $\Theta \geqslant C_1$,
		\begin{align*}
			& \frac{1}{\Theta^{n+m} \Lambda_2^{m} (m-1)!n! }\biggl( \int_{\RR^d} (\lambda +|x|^2)^n  |\nabla^m u|^2 \exp(\alpha |x|^2 - 2\alpha R_{n,\lambda}^2)_+\,dx\biggr)^{\sfrac 12}\\
			&=\frac{1}{\Theta^{n+m} \Lambda_2^{m} (m-1)!n! }
			\\ & \quad \times \biggl[ \int_{|x| \leqslant R_{n,\lambda}} (\lambda +|x|^2)^n |\nabla^m u|^2\,dx + e^{-2\alpha R_{n,\lambda}^2}\int_{|x| \geqslant R_{n,\lambda}}(\lambda +|x|^2)^n  e^{2H} |\nabla^m u|^2\,dx\biggr]^{\sfrac 12} \\
			& \leqslant \frac{1}{\Theta^{n+m} \Lambda_2^{m} (m-1)!n! }\biggl[ \int_{|x| \leqslant R_{n,\lambda}} (\lambda +|x|^2)^n |\nabla^m u|^2\,dx \\ &\quad \quad \quad + e^{-2\alpha R_{n,\lambda}^2}(m-1)!^2 C_1^{n+m-1}\Lambda^{m-2}\int_{|x| \leqslant R_{n,\lambda}}(\lambda +|x|^2)^{n+m}  e^{2H}  u^2\,dx \\&\quad \quad \quad + (m-1)!^2 C_1^m \Lambda^2 e^{-2\alpha R_{n,\lambda}^2} \sum_{\ell=0}^{m-1} \int_{\RR^d} e^{2H} (\lambda+|x|^2)^{n+m-\ell-2}|\nabla^\ell f|^2\,dx \biggr]^{\sfrac 12} \\
			& \leqslant  \biggl[\int_{\RR^d}   u^2\,dx \biggr]^{\sfrac 12} \\ &\quad \quad + \sum_{j=0}^{m-1} \frac{C_1^m \Lambda^2}{\Theta^{n+m} \Lambda^m (m-1)! n!}\biggl[ \int_{\RR^d}(\lambda +|x|^2)^{n +m-j-1} e^{2H}|\nabla^j f|^2\,dx \biggr]^{\sfrac 12} \\
			&\lesssim \biggl[\int_{\RR^d} u^2\,dx\biggr]^{\sfrac 12} + \nnn f\nnn_{\lambda,\Theta} \,.
		\end{align*}
		Taking the supremum with respect to $m,n \in \NN$  on both sides, we arrive at 
		\begin{equation}
			\label{e.triplenormest}
			\nnn u\nnn_{\lambda,\Theta} \lesssim  \biggl[ \int_{\RR^d} u^2\,dx\biggr]^{\sfrac 12} + C \nnn f\nnn_{\lambda,\Theta}.
		\end{equation}
		It remains to estimate the $L^2$ norm of $u.$ Toward this end, using the formula from Step 1, and Plancherel, 
		\begin{align*}
			\int_{\RR^d} u^2 \,dx = \sum_{\mu \in \sigma(\mathcal{L}_0) \setminus \{\lambda\}} \frac{|f_\mu|^2}{(\mu - \lambda)^2} \leqslant \frac{1}{\gamma(\lambda)^2} \sum_{\mu \in \sigma(\mathcal{L}_0) \setminus \{\lambda\}} |f_\mu|^2 = \frac{1}{\gamma(\lambda)^2} \int_{\RR^d} f^2 \,dx,
		\end{align*}
		since $\int f\phi_\lambda = 0.$
		The proof is concluded by observing that $\|f\|_{L^2} \leqslant \nnn f\nnn_{\lambda,\Theta},$ and combining this with \eqref{e.triplenormest}.  
	\end{proof}

	\section{Expansions for Simple Eigenvalues and their Eigenfunctions}
	\label{s.simple}

	In this section we consider the simpler case of a simple eigenvalue of the second-order homogenized operator~$\mathcal{L}_0$ and build an expansion for the corresponding eigenvalue and eigenfunction for the heterogeneous operator~$\mathcal{L}_\e$. Of course, this is only a very particular case of our main results, but the computations and the notation are much less heavy and therefore easier to understand in this setting. 
	
	\subsection{First Order Expansions for Simple Eigenvalues} \label{ss.simple.first}

	\begin{proof}[Proof of Theorem \ref{t.simple}] 
		%
		Define 
		\begin{align*}
			\widetilde{\lambda}_\e := \lambda_{0,j} + \e\mu_{1,j},
		\end{align*}
		where $\mu_{1,j}$ is given by
		\begin{align*}
			\mu_{1,j} := \int_{\RR^d} \bigl(\bba^{(3)}: \nabla^2 \phi_{0,j}(x) \bigr)\cdot \nabla \phi_{0,j}(x)\,dx\,;
		\end{align*}
		we also set 
		\begin{align*}
			w_\e :=  \phi_{0,j} + \e U_{1,j} + \e\nabla (\phi_{0,j} + \e U_{1,j}) \cdot \bchi^{(1)}\bigl(\frac{x}{\e}\bigr)\,,
		\end{align*}
		where we recall that $U_{1,j}$ is the unique solution to the equation 
		\begin{align*}
			(\mathcal{L}_0 - \lambda_{0,j})U_{1,j} = \mu_{1,j} \phi_{0,j} + \bba^{(3)} : \nabla^3 \phi_{0,j},
		\end{align*}
		which is orthogonal in $L^2$ to $\phi_{0,j}.$ In step 1 below we insert this ansatz above in the PDE and compute, and to alleviate notation, suppress the dependence on $j$ (the index of the eigenvalue), which is fixed. 
		
		\smallskip
		\emph{Step 1.} By a direct computation (see, for instance~\cite[Lemma 6.7]{AKM}) , we find that 
		\begin{equation*}
			- \nabla \cdot \bha^\e \nabla w_\e
			=
			- \nabla \cdot \bba \nabla (\phi_{0} + \e U_{1}) 
			+
			\nabla \cdot \biggl(
			\sum_{k=1}^d \bigl( \bfs_{e_k}^\e -  \chi^{1,\e}_{e_k} \bha^\e 
			\bigr)
			\nabla \partial_{x_k} \bigl( \phi_{0} + \e U_{1}\bigr)
			\biggr)
			\,.
		\end{equation*}
		Using the equation for the second-order corrector equation, we can write the second term as
		\begin{align*}
			\lefteqn{
				\nabla \cdot \biggl(
				\sum_{k=1}^d \bigl( \bfs_{e_k}^\e -  \chi^{1,\e}_{e_k} \bha^\e 
				\bigr)
				\nabla \partial_{x_k} \bigl( \phi_{0} + \e U_{1}\bigr)
				\biggr)
			} \quad & 
			\notag \\ & 
			=
			\nabla \cdot 
			\sum_{k=1}^d \bha^\e \chi^{2,\e}_{e_k}
			\nabla \partial_{x_k} \bigl( \phi_{0} + \e U_{1}\bigr)
			-
			\sum_{k=1}^d
			\bigl( \bha^\e \chi^{2,\e}_{e_k}
			+ \bfs_{e_k}^\e -  \chi^{1,\e}_{e_k} \bha^\e 
			\bigr) \nabla^2 \partial_{x_k} \bigl( \phi_{0} + \e U_{1}\bigr)\,.
		\end{align*}
		The first term on the right side is~$O(\e^2)$. The second term on the right side can be rewritten in terms of the second-order flux corrector as follows:
		\begin{align*}
			\lefteqn{
				\sum_{k=1}^d
				\bigl( \bha^\e \chi^{2,\e}_{e_k}
				+ \bfs_{e_k}^\e -  \chi^{1,\e}_{e_k} \bha^\e 
				\bigr) \nabla^2 \partial_{x_k} \bigl( \phi_{0} + \e U_{1}\bigr)
			} \quad & 
			\notag \\ & 
			=
			\e\, \bba^{(3)}:\nabla^3 (\phi_{0}+\e U_{1})
			+
			\sum_{k=1}^d
			\bigl( \bha^\e \chi^{2,\e}_{e_k}
			+ \bfs_{e_k}^\e -  \chi^{1,\e}_{e_k} \bha^\e 
			- \e \,\bba^{(3)}
			\bigr) \nabla^2 \partial_{x_k} \bigl( \phi_{0} + \e U_{1}\bigr)
			\notag \\ & 
			=
			\e\,\bba^{(3)}:\nabla^3 (\phi_{0}+\e U_{1})
			+
			\nabla \cdot
			\sum_{k=1}^d
			\bfs_{e_k}^{2,\e} \nabla^2 \partial_{x_k} \bigl( \phi_{0} + \e U_{1}\bigr)
			\,.
		\end{align*}
		Therefore we obtain that 
		\begin{align}  
			\label{e.wepsdivgrad}
			- \nabla \cdot \bha^\e \nabla w_\e
			& 
			=
			- \nabla \cdot \bba \nabla (\phi_{0} + \e U_{1}) 
			- \e\,
			\bba^{(3)}:\nabla^3 (\phi_{0}+\e U_{1})
			\notag \\ & \qquad 
			+
			\nabla \cdot
			\underbrace{
				\sum_{k=1}^d
				\bigl( \bha^\e \chi_{e_k}^{2,\e} \nabla \partial_{x_k} -
				\bfs_{e_k}^{2,\e} \nabla^2 \partial_{x_k} \bigr) \bigl( \phi_{0} + \e U_{1}\bigr)
			}_{=: R_\e}
		\end{align}
		We note that 
		\begin{align*}
			\|R_\e\|_{L^2} 
			& 
			\leq
			C\e^2 (\|\nabla^2 \phi_0\|_{L^2} + \e \|\nabla^2 U_1\|_{L^2}) + C\e^2 (\|\nabla^3 \phi_0\|_{L^2} + \e \|\nabla^3 U_1\|_{L^2}) 
			\notag \\ & 
			\leq 
			C\e^2 \lambda_0^{\sfrac32} + C\e^3 \frac{\lambda_0^{\sfrac52}}{\gamma({\lambda_0})}\,.
		\end{align*}
		We deduce that 
		\begin{equation} 
			\label{e.Reps.bound}
			\|\nabla \cdot R_\e\|_{H^{-1}(\RR^d)} \leqslant \|R_\e\|_{L^2(\RR^d)} \leqslant C\e^2 \lambda_0^{\sfrac32} + C\e^3 \frac{\lambda_0^{\sfrac52}}{\gamma({\lambda_0})}\,. 
		\end{equation}
		Next, we compute
		\begin{align} \label{e.(W-lam)w}
			&(W(x) - \widetilde{\lambda}_\e) w_\e = (W(x) - \lambda_0)\phi_0 + \e( (W(x) - \lambda_0) U_1 +  \mu_1 U_1) + S_\e,
			\notag \\   &
			S_\e := - \e^2(\mu_1 U_1 + \mu_1\nabla (\phi_0 + \e U_1)\cdot \bchi^{(1)}(\sfrac{x}{\e}) ) + \e \bigl( (W(x) - \lambda_0) \nabla(\phi_0 + \e U_1)\cdot \bchi^{(1)}(\sfrac{x}{\e})\bigr)
			\notag \\
			&=: S_\e^{(1)} + S_\e^{(2)}\,.
		\end{align}
		The term $S_\e^{(1)}$ is clearly of order $O(\e^2)$ in $L^2(\RR^d);$ explicitly, 
		\begin{equation} \label{e.seps1}
			\begin{aligned}
				\|S_\e^{(1)}\|_{L^2(\RR^d)} &\leqslant C\e^2|\mu_1|\bigl( \|U_1\|_{L^2(\RR^d)} + \|\nabla \phi_0 + \e \nabla U_1\|_{L^2(\RR^d)}  \bigr) \\ &\leqslant C\e^2 \lambda_0^{\sfrac{3}{2}} \biggl( \frac{\lambda_0^{\sfrac32}}{\gamma({\lambda_0})} + \lambda_0^{\sfrac12} + \e\frac{\lambda_0^{\sfrac32}}{\gamma({\lambda_0})} \biggr)\,.
			\end{aligned}
		\end{equation}
		Concerning $S_\e^{(2)},$ for each $x \in \RR^d,$ we introduce the function $z \in H^1(\TT^d)$ to be the unique mean zero solution to 
		\begin{align*}
			-\nabla_y \cdot \bha \nabla_y z(x,y)  = (W(x) - \lambda_0) \nabla \phi_0 (x)\cdot \bchi^{(1)}(y)\,.  
		\end{align*}
		This problem is well-posed, since $\langle \bchi^{(1)}(y) \rangle = 0.$ We set $z_\e(x) := \e^3 z(x, \sfrac{x}{\e}),$ and compute that 
		\begin{equation}\label{e.zeps.RHS}
			\begin{aligned} 
				\nabla \cdot \bha^\e \nabla z_\e &= \e^3 \nabla_x \cdot \bha^\e \nabla_x z + \e^2 \bigl( \nabla_y \cdot(\bha^\e \nabla_x z) + \nabla_x \cdot \bha^\e \nabla_y z \bigr)\big\vert_{(x,\sfrac{x}{\e})} + \e \nabla_y \cdot \bha^\e \nabla_y z(x,\sfrac{x}{\e})\\
				&= S_\e^{(3)}(x,\sfrac{x}{\e}) - \e(W(x)- \lambda_0)\nabla \phi_0(x) \cdot \bchi^{(1)}(y) \,, 
			\end{aligned}
		\end{equation}
		
		By Proposition \ref{l.cacciop.per},  for any $\alpha \in \NN_0^d$ and $x \in \RR^d,$
		\begin{align*}
			\|\partial^\alpha_x z(x,\cdot)\|_{H^1(\TT^d)} 
			&\leqslant C\|\partial^\alpha \bigl( (W(x) - \lambda_0 )\nabla \phi_0 (x) \cdot \bchi^{(1)}(y)\bigr) \|_{L^2(\TT^d)} \\ &\leqslant C |\partial^\alpha \bigl( (W(x) - \lambda_0 )\nabla \phi_0 (x)\bigr)|\,.
		\end{align*}
		It follows that 
		\begin{align}  \notag
			\|S_\e^{(3)}(x,\sfrac{x}{\e})\|_{L^2(\RR^d)} &\leqslant C\e^3\|\Delta_x z(x,y)\|_{L^2(\RR^d) \times L^\infty(\TT^d)} + \e^2 \|\nabla_x\nabla_y z(x,y)\|_{L^2(\RR^d) \times L^\infty(\TT^d)} \\ \label{e.seps3bnd}
			&\leqslant C \e^3 \lambda_0^{\sfrac52} + C \frac{\e^2 \lambda_0^2}{\gamma({\lambda_0})}\,.  
		\end{align}
		Combining \eqref{e.wepsdivgrad} through \eqref{e.seps3bnd} we arrive at 
		\begin{equation*}
			\begin{aligned}
				&(\mathcal{L}_\e - \widetilde{\lambda}_\e) (w_\e - z_\e) = \nabla \cdot R_\e + S_\e^{(1)} + \e^2 (W(x) - \lambda_0) \nabla U_1 \cdot \bchi^{(1)}(\sfrac{x}{\e}) + S_\e^{(3)}\\
				&\quad = \nabla \cdot R_\e + \tilde{S}_\e\,.
			\end{aligned}
		\end{equation*}
		Here, 
		\begin{equation}
			\label{e.number}
			\begin{aligned}
	&\|\nabla \cdot R_\e\|_{H^{-1}(\RR^d)} + \|\tilde{S}_\e\|_{L^2(\RR^d)} \\
&\quad \quad \leqslant C\e^2 \lambda_0^{\sfrac32} +  C\e^2 \lambda_0^2  + C\e^3 \lambda_0^{\sfrac52} + \frac{1}{\gamma({\lambda_0})}(C\e^3 \lambda_0^{\sfrac52} + C\e^2 \lambda_0^3 + C\e^3 \lambda_0^3 + C\e^2 \lambda_0^2) =: \delta_\e(\lambda_0)\,.
			\end{aligned}
		\end{equation}

		\smallskip

		\emph{Step 2.} We write ~$R_\e = \sum_{i=1}^\infty c_{\e,i} \phi_{\e,i},$ and $ S_\e = \sum_{i=1}^\infty s_{\e,i} \phi_{\e,i},$ where $\{\phi_{\e,i}\}_{i=1}^\infty$ denote the eigenfunctions of $\mathcal{L}_\e$, normalized so they form an orthonormal basis of $L^2(\RR^d). $ Similarly, we write $w_\e - z_\e = \sum_{i=1}^\infty d_{\e,i} \phi_{\e,i}.$ Then
		\begin{align*}
			(\mathcal{L}_\e - \widetilde{\lambda}_\e )( w_\e - z_\e) =\nabla\cdot R_\e + \tilde{S}_\e,
		\end{align*}
	{\color{blue}	together with~\eqref{e.number}} yields
		
		\begin{align} \label{e.sum.damn.small}
			\sum_{i=1}^\infty |d_{\e,i}|^2(\lambda_{\e,i} - \widetilde{\lambda}_\e)^2 \leqslant \sum_{i=1}^\infty |c_{\e,i}|^2 + |s_{\e,i}|^2\leqslant  \delta_\e(\lambda_0)^2 \,.
		\end{align}
		Now, as $\int_{\RR^d} U_1\phi_0\,dx=0$ by choice,
		\begin{align*}
			\int_{\RR^d}w_\e^2 \,dx = \int_{\RR^d}(\phi_0 + \e U_1 + \e\nabla(\phi_0 +\e U_1)\cdot \bchi^{(1)}(\sfrac{x}{\e}))^2\,dx \leqslant 1 + \delta_\e(\lambda_0)\,.
		\end{align*}
		so that by Plancherel, 
		\begin{align} \label{e.dk.sum.1}
			\sum_{i=1}^\infty|d_{\e,i}|^2 \leqslant  1 + \delta_\e(\lambda_0)\,.
		\end{align}

		\smallskip
		\emph{Step 3.} In this step, we restore notating the dependence of various quantities on $k,$ and obtain an intermediate bound.  
		By Proposition \ref{p.eigvalconv}, for all $\e$ and $j \in \NN,$ we have 
		\begin{align*}
			|\lambda_{\e,i} - \lambda_{0,i}| \leqslant C_1 \e \lambda_{0,i}^{\sfrac32}\,.
		\end{align*}
		As $|\widetilde{\lambda}_\e - \lambda_{0,j}| \leqslant \e|\mu_{1,j}| \leqslant \e C_{2,2,0} \lambda_{0,j}^{\sfrac32}, $ it follows from the triangle inequality that for all $i \neq j$ we have 
		\begin{align*}
			|\lambda_{\e,i} - \widetilde{\lambda}_\e| &= |\lambda_{0,i} - \lambda_{0,j} + \lambda_{0,j} - \widetilde{\lambda}_\e + \lambda_{\e,i} - \lambda_{0,i}|\\
			&\geqslant \gamma(\lambda_{0,j}) - \e (C_1 + C_{2,2,0})\lambda_{0,j}^{\sfrac32}\,.
		\end{align*}
		In particular, if $\e < \frac{\gamma(\lambda_{0,j})}{2(C_{2,2,0} + C_1) \lambda_{0,j}^{\sfrac32}}, $ it follows that 
		\begin{align*}
			|\lambda_{\e,i} - \widetilde{\lambda}_\e| \geqslant \frac{\gamma(\lambda_{0,j})}{2},
		\end{align*}
		for all $i \neq j.$ It follows that 
		\begin{align*}
			\sum_{i \neq j} |d_{\e,i}|^2 \leqslant \frac{\delta_\e(\lambda_{0,j})^2}{\gamma(\lambda_{0,j})^2}\,. 
		\end{align*}
		From \eqref{e.dk.sum.1} it then follows that 
		\begin{align*}
			\bigl| |d_{\e,j}|^2 - 1\bigr| \leqslant \frac{\delta_\e(\lambda_{0,j})^2}{\gamma(\lambda_{0,j})^2}\,. 
		\end{align*}
		Therefore, 
		\begin{align*}
			|\lambda_{\e,j} - \widetilde{\lambda}_\e|^2 \leqslant  \frac{\delta_\e(\la_{0,j})^2}{1 -\frac{\delta_\e(\lambda_{0,j})^2}{\gamma(\lambda_{0,j})^2} } \leqslant  2\delta_\e(\lambda_{0,j})^2\,. 
		\end{align*}
		This implies the estimate desired in \eqref{e.simple.expansion}. Finally, to prove \eqref{e.simple.eigf}, we note that the eigenfunction $\psi_{\e,j}$ associated to the eigenvalue $\lambda_{\e,j},$ which is normalized as in \eqref{e.simple.normalization} satisfies 
		\begin{equation*}
			\begin{aligned}
				(\mathcal{L}_\e - \lambda_{\e,j})(\psi_{\e,j} - (w_\e-z_\e)) &= (\widetilde{\lambda}_\e - \lambda_{\e,j}) (w_\e-z_\e) + \nabla \cdot R_\e + S_\e\,. 
			\end{aligned}
		\end{equation*}
		Then, once again, by Plancherel's theorem we find 
		\begin{equation*}
			\begin{aligned}
				\int_{\RR^d}(\psi_{\e,j} - (w_\e-z_\e))^2 \,dx \lesssim  \sum_{i \neq j} |d_{\e,i}|^2 + \bigl|1 - |d_{\e,j}|^2 \bigr| \lesssim \frac{\delta_\e(\lambda_{0,j})^2}{\gamma(\lambda_{0,j})^2}\,.
			\end{aligned}
		\end{equation*}
		Simiarly, the $H^1$ estimate is proven by differentiating the equation for $\psi_{\e,j} -( w_\e-z_\e),$ and estimating similarly using Plancherel. 
		
		\smallskip
		
		\emph{Step 4.} In order to complete the argument we must show that $\mu_{1,k} = 0$, and so $U_{1,k} \equiv 0.$ Recalling that 
		\begin{align*}
			\mu_{1,k} = \int_{\RR^d} \bba_{ijp}^{(3)} \partial^2_{x_jx_p} \phi_{0,j}(x)
			\partial_{x_i} \phi_{0,j}(x)\,dx \,.
		\end{align*}
		By the symmetry of the hessian this means 
		\begin{align*}
			\mu_{1,k} = \int_{\RR^d} (\bba^{(3)}_{ijp} + \bba^{(3)}_{ipj})\partial^2_{x_jx_p}\phi_{0,j}\partial_{x_i}\phi_{0,j} \,dx = 2 \int_{\RR^d} \bba^{(3),s}_{ijp} \partial^2_{x_jx_p}\phi_{0,j} \partial_{x_i}\phi_{0,j}\,dx\,.
		\end{align*}
		Integrating by parts twice, and then re-indexing we get
		\begin{align*}
			\mu_{1,r} = 2 \int_{\RR^d} \bba^{(3),s}_{ijp} \partial^2_{x_ix_p} \phi_{0,j} \partial_{x_j} \phi_{0,j}\,dx = 2\int_{\RR^d} \bba^{(3),s}_{pij}\partial^2_{x_kx_j}\phi_{0,j} \partial_{x_i}\phi_{0,j}\,dx,
		\end{align*}
		and similarly, 
		\begin{align*}
			\mu_{1,k} = 2\int_{\RR^d} \bba^{(3),s}_{jpi} \partial^2_{x_jx_p}\phi_{0,j}\partial_{x_i}\phi_{0,j}\,dx\,.
		\end{align*}
		Adding, and invoking Lemma \ref{l.3rdorder.symm}, we find that 
		\begin{align*}
			3\mu_{1,k} = 0\,.
		\end{align*}
		It follows then that $U_{1,k}\equiv 0,$ and hence the conclusion of the theorem obtains. 
	\end{proof}

	The rest of this section develops the machinery, and eventually proves the high order expansion for a simple eigenvalue $\lambda_{\e,j}$ of $\mathcal{L}_\e$ from a simple eigenvalue $\lambda_{0,j}$ of $\mathcal{L}_0$ (along with associated expansions for the eigenfunctions). In the remainder of this section, as we work with a fixed simple eigenvalue $\lambda_{0,j}$ of $\mathcal{L}_0$, and we omit the dependence on the index~$j$, henceforth denoting~$\lambda_{0,j}$ by~$\lambda_0$. 
	
	\subsection{Formal expansion and heuristic derivation of the corrector equations} \label{ss.informal.simple}
	
	In this subsection we give some heuristic computations to motivate our asymptotic expansion. 
	Let~$\lambda_0$ be an eigenvalue of $\mathcal{L}_0$ with eigenfunction $u_0,$ and we look for an eigenpair $(u_\e,\lambda_\e)$ of the heterogeneous operator~$\mathcal{L}_\e$ which admits the following ansatz:
	\begin{equation}
		\label{e.ansatzlambdaep}
		\lambda_\e = 
		\lambda_0 + 
		\sum_{j=1}^\infty \e^j \mu_j 
		=
		\sum_{j=0}^\infty \e^j\mu_j\,,
	\end{equation}
	where for convenience we have set~$\mu_0:= \lambda_0$,
	and
	\begin{align}
		\label{e.ansatzuep}
		u_\ep (x)
		&
		= 
		\sum_{k=0}^\infty
		\sum_{m=0}^\infty
		\sum_{|\alpha|=m}
		\sum_{n=m}^\infty
		\ep^{k+n} 
		\partial_x^\alpha U_k(x) 
		\chi_{n,\alpha,k} \biggl( x, \frac{x}{\ep} \biggr)
		\notag \\ & 
		=
		\sum_{p=0}^\infty
		\sum_{k=0}^p
		\sum_{m=0}^{p-k}
		\sum_{|\alpha|=m}
		\ep^p 
		\partial_x^\alpha U_k(x) 
		\chi_{p-k,\alpha,k} \biggl( x, \frac{x}{\ep} \biggr)
		\,.
	\end{align}
	The parameters~$\{ \mu_j \}_{j\in\NN}$ will be determined together with the functions~$\chi_{n,\alpha,k}(x,y)$, called the correctors, which are 
	periodic in the variable~$y$. 
	Note that the correctors~$\chi_{n,\alpha,k}$ are indexed by~$(n,\alpha,k)$, where~$n,k\in\N_0$ and~$\alpha$ is a multiindex. 
	We also denote 
	\begin{equation*}
		\bchi_{n,m,k} = ( \chi_{n,\alpha,k} )_{|\alpha|=m}\,,
	\end{equation*}
	which is a periodic function taking values in~$\mathbf{T}^m$. We may then write the ansatz in the second line of~\eqref{e.ansatzuep} in tensor notation as 
	\begin{align} 
		\label{e.2scexp}
		u_\e(x) 
		& = 
		\sum_{p=0}^\infty \sum_{k=0}^p \sum_{m=0}^{p-k} \e^p  \nabla^m U_k(x) : \bchi_{p-k,m,k}\left(x,\frac{x}{\e}\right)
		\,. 
	\end{align}
	We declare straightaway that the correctors will satisfy the following properties:
	\begin{equation}
		\label{e.chi.00k.meanzero}
		\chi_{0,0,k} = 1, \quad \forall k\in\N,
		\quad \mbox{and} \quad 
		\langle \chi_{p,\alpha,k} \rangle = {\bf{1}}_{\{ p = 0, \alpha = 0 \}}\,.
	\end{equation}
	Since $\chi_{p,\alpha,k}$ only appears in~\eqref{e.ansatzuep} if $p \geq |\alpha|$, and if all the indices are nonnegative, we also adopt the convention that 
	\begin{equation}
		p < |\alpha|  \implies \chi_{p,\alpha,k} = 0, \quad \forall p,\alpha,k;
	\end{equation}
	and that~$\chi_{p,\alpha,k}=0$ if any index is negative. Throughout, if $F(x,y)$ is a function which is periodic in the variable~$y$, then we denote by $\langle F \rangle(x)$ the mean of $F(x,\cdot)$ and set
	\begin{equation}
		\label{e.mathring}
		\mathring{F}(x,y):= F(x,y) - \langle F \rangle(x)\,.
	\end{equation}
	To determine the correctors~$\chi_{q,\alpha,k}$, the parameters $\{ \mu_j\}$ and the macroscopic functions~$\{ U_k \}$, we proceed (informally) by plugging the ansatz for $u^\e$ into the equation $\mathcal{L}_\e u^\e = \lambda_\e u_\e$.
	First, we compute the right hand side of the equation by multiplying~\eqref{e.ansatzlambdaep} and~\eqref{e.ansatzuep}:
	\begin{align}
		\label{e.lamuep}
		\lambda_\ep u_\ep(x)
		&
		=
		\sum_{j=0}^\infty
		\sum_{p=0}^\infty
		\sum_{k=0}^p
		\sum_{m=0}^{p-k}
		\sum_{|\alpha|=m}
		\ep^{p+j} \mu_j
		\partial_x^\alpha U_k(x) 
		\chi_{p-k,\alpha,k} \biggl( x, \frac{x}{\ep} \biggr)
		\notag \\ & 
		=
		\sum_{p=0}^\infty
		\sum_{r=0}^p
		\sum_{k=0}^r
		\sum_{m=0}^{r-k}
		\sum_{|\alpha|=m}
		\ep^{p} \partial_x^\alpha U_k(x) 
		\mu_{p-r}
		\chi_{r-k,\alpha,k} \biggl( x, \frac{x}{\ep} \biggr)
		\notag \\ & 
		=
		\sum_{p=0}^\infty
		\sum_{k=0}^p
		\sum_{m=0}^{p-k}
		\sum_{|\alpha|=m}
		\ep^{p} \partial_x^\alpha U_k(x) 
		\sum_{r=m}^p
		\mu_{p-r}
		\chi_{r-k,\alpha,k} \biggl( x, \frac{x}{\ep} \biggr)
		\,.
	\end{align}
	We turn to the computation of the left side of the equation, namely~$\mathcal{L}_\ep u_\ep$. We first compute the gradient~$\nabla u_\ep$ in coodinates:
	\begin{align*}
		\partial_{x_j} u_\ep(x) 
		&
		=
		\Biggl[
		\bigl( \partial_{x_j} + \ep^{-1} \partial_{y_j} \bigr)
		\sum_{p=0}^\infty
		\sum_{k=0}^p
		\sum_{m=0}^{p-k}
		\sum_{|\alpha|=m}
		\ep^p 
		\partial_x^\alpha U_k(x) 
		\chi_{p-k,\alpha,k} ( x, y)
		\Biggr] \Biggr\vert_{y= \frac{x}{\e}}
		\notag \\ & 
		=
		\sum_{p=0}^\infty
		\sum_{k=0}^p
		\sum_{m=0}^{p-k}
		\sum_{|\alpha|=m}
		\biggl( 
		\ep^{p-1} 
		\partial_x^\alpha U_k(x) 
		\partial_{y_j} \chi_{p-k,\alpha,k} ( x, y)
		+
		\ep^p 
		\partial_x^{\alpha+e_j} U_k(x) 
		\chi_{p-k,\alpha,k} ( x, y)
		\notag \\ & 
		\qquad \qquad \qquad \qquad \qquad 
		+
		\ep^p 
		\partial_x^\alpha U_k(x) 
		\partial_{x_j} \chi_{p-k,\alpha,k} ( x, y)
		\biggr) \biggr\vert_{y= \frac{x}{\e}}
		\notag \\ & 
		=
		\sum_{p=0}^\infty
		\sum_{k=0}^p
		\sum_{m=0}^{p-k}
		\sum_{|\alpha|=m}
		\ep^{p-1} 
		\partial_x^\alpha U_k(x) 
		\notag \\ & 
		\quad
		\times \Bigl( 
		\partial_{y_j} \chi_{p-k,\alpha,k} ( x, y)
		+
		\chi_{p-1-k,\alpha-e_j,k} ( x, y)
		+
		\partial_{x_j} \chi_{p-1-k,\alpha,k} ( x, y)
		\Bigr) \biggr\vert_{y=\frac x\ep}
		\,.
	\end{align*}
	In the last line, we re-indexed two of the sums in order to make the common factor~$\ep^{p-1} \partial_{x}^\alpha U_k$ appear in each of the three terms. This requires changing the bounds on the summands, and the expression we have written actually extra terms in the sum because we did not change the bounds. However, the extra terms correspond to~$\chi_{p,\alpha,k}$ with either~$p=-1$ or~$\alpha=-e_j$. If we adopt the convention that~$\chi_{p,\alpha,k} :=0$ if any index is negative, then the expression above is valid. We will play the same game in our computations below.
	
	\smallskip
	
	We next compute
	\begin{align*}
		\lefteqn{
			(\nabla \cdot \a_\ep \nabla u_\ep)(x)
		} \  & 
		\\ & 
		=
		\sum_{i,j=1}^d
		\bigl( \partial_{x_i} + \ep^{-1} \partial_{y_i} \bigr)
		\sum_{p=0}^\infty
		\sum_{k=0}^p
		\sum_{m=0}^{p-k}
		\sum_{|\alpha|=m} \!\!
		\ep^{p-1} 
		\partial_x^\alpha U_k(x) 
		\notag \\ & 
		\qquad \qquad
		\times  \a_{ij}(y)  \Bigl( 
		\partial_{y_j} \chi_{p-k,\alpha,k} ( x, y)
		+
		\chi_{p-1-k,\alpha-e_j,k} ( x, y)
		+
		\partial_{x_j} \chi_{p-1-k,\alpha,k} ( x, y)
		\Bigr) \biggr\vert_{y=\frac x\ep}
		\notag \\ & 
		=
		\sum_{p=0}^\infty
		\sum_{k=0}^p
		\sum_{m=0}^{p-k}
		\sum_{|\alpha|=m} \!\!
		\ep^{p-2} \partial_x^\alpha U_k
		\notag \\ & 
		\qquad \qquad 
		\times 
		\Bigl(
		\nabla_y \cdot \a \nabla_y \chi_{p-k,\alpha,k} 
		+
		\nabla_y \cdot \a \nabla_x \chi_{p-1-k,\alpha,k} 
		+ 
		\sum_{i,j=1}^d \!\!
		\partial_{y_i} \bigl( \a_{ij} \chi_{p-1-k,\alpha-e_j,k} \bigr) 
		\Bigr)
		\notag \\ & 
		\quad 
		+ 
		\sum_{p=0}^\infty
		\sum_{k=0}^p
		\sum_{m=0}^{p-k}
		\sum_{|\alpha|=m} 
		\sum_{i,j=1}^d
		\ep^{p-1} \partial_x^{\alpha+e_i} U_k
		\a_{ij}  \Bigl( 
		\partial_{y_j} \chi_{p-k,\alpha,k}  
		+
		\chi_{p-1-k,\alpha-e_j,k} 
		+
		\partial_{x_j} \chi_{p-1-k,\alpha,k}  
		\Bigr) 
		\notag \\ & 
		\quad 
		+ 
		\sum_{p=0}^\infty
		\sum_{k=0}^p
		\sum_{m=0}^{p-k}
		\sum_{|\alpha|=m} \!\!
		\ep^{p-1} 
		\partial_x^\alpha U_k 
		\notag \\ & 
		\qquad \qquad 
		\times  
		\Bigl(
		\nabla_x \cdot \a \nabla_y \chi_{p-k,\alpha,k} 
		+
		\nabla_x \cdot \a \nabla_x \chi_{p-1-k,\alpha,k} 
		+ 
		\sum_{i,j=1}^d \!\!
		\partial_{x_i} \bigl( \a_{ij} \chi_{p-1-k,\alpha-e_j,k} \bigr) 
		\Bigr)
		\biggr\vert_{t=\frac x\ep}
		\,.
	\end{align*}
	Changing the bounds on the sums in order to factor out the common term~$\ep^{p-2} \partial_x^\alpha U_k$, we can write this expression as 
	\begin{align*}
		\lefteqn{
			(\nabla \cdot \a_\ep \nabla u_\ep)(x)
		} \ \  & 
		\\ & 
		=
		\sum_{p=0}^\infty
		\sum_{k=0}^p
		\sum_{m=0}^{p-k}
		\sum_{|\alpha|=m} \!\!
		\ep^{p-2} \partial_x^\alpha U_k
		\notag \\ & 
		\qquad  
		\times 
		\biggl(
		\nabla_y \cdot \a \nabla_y \chi_{p-k,\alpha,k} 
		+
		\nabla_y \cdot \a \nabla_x \chi_{p-1-k,\alpha,k} 
		+ 
		\sum_{i,j=1}^d \!\!
		\partial_{y_i} \bigl( \a_{ij} \chi_{p-1-k,\alpha-e_j,k} \bigr) 
		\notag \\ & 
		\qquad  \qquad 
		+
		\sum_{i,j=1}^d
		\a_{ij} 
		\Bigl( 
		\partial_{y_j} \chi_{p-1-k,\alpha-e_i,k}  
		+
		\chi_{p-2-k,\alpha-e_j-e_i,k} 
		+
		\partial_{x_j} \chi_{p-2-k,\alpha-e_i,k}  
		\Bigr) 
		\notag \\ & 
		\qquad  \qquad 
		+
		\nabla_x \cdot \a \nabla_y \chi_{p-1-k,\alpha,k} 
		+
		\nabla_x \cdot \a \nabla_x \chi_{p-2-k,\alpha,k} 
		+ 
		\sum_{i,j=1}^d \!\!
		\partial_{x_i} \bigl( \a_{ij} \chi_{p-2-k,\alpha-e_j,k} \bigr) 
		\biggr)
		\Biggr\vert_{y = \frac x\ep}
		\!\! .
	\end{align*}
	In order to simplify this expression, we introduce the vector field~$\mathbf{f}_{q,\alpha,k}(x,y)$ with $i$th entry given by 
	\begin{equation*}
		\bigl(\mathbf{f}_{q,\alpha,k}\bigr)_i
		:=
		\bigl( \a \nabla_y \chi_{q,\alpha,k} \bigr)_i
		+ 
		\bigl( \a  \nabla_x \chi_{q-1,\alpha,k} \bigr)_i
		+
		\sum_{j=1}^d \a_{ij} \chi_{q-1,\alpha-e_j,k}
		\,.
	\end{equation*}
	Substituting this expression into the previous display, we get
	\begin{align*}
		\lefteqn{
			(\nabla \cdot \a_\ep \nabla u_\ep)(x)
		} \quad & 
		\\ & 
		=
		\sum_{p=0}^\infty
		\sum_{k=0}^p
		\sum_{m=0}^{p-k}
		\sum_{|\alpha|=m} \!\!
		\ep^{p-2} \partial_x^\alpha U_k
		\biggl( 
		\nabla_y \cdot \mathbf{f}_{p-k,\alpha,k}
		+ \nabla_x \cdot  \mathbf{f}_{p-1-k,\alpha,k}
		+
		\sum_{i=1}^d
		\mathbf{f}_{p-1-k,\alpha-e_i,k}
		\biggr) \biggr\vert_{y=\frac x\ep}
		\,.
	\end{align*}
	Combining~\eqref{e.lamuep} with the previous display, remembering also the $W u_\ep$ term, we obtain 
	\begin{align}
		\label{e.expansa}
		\lefteqn{
			\bigl( -\nabla \cdot \a_\ep \nabla + W - \lambda_\ep \bigr) u_\ep 
		} \quad & 
		\notag \\ & 
		=
		\sum_{p=0}^\infty
		\sum_{k=0}^p
		\sum_{m=0}^{p-k}
		\sum_{|\alpha|=m} \!\!
		\ep^{p-2} \partial_x^\alpha U_k 
		\biggl( 
		-\nabla_y \cdot \mathbf{f}_{p-k,\alpha,k}
		- \nabla_x \cdot  \mathbf{f}_{p-1-k,\alpha,k}
		-
		\sum_{i=1}^d
		\mathbf{f}_{p-k,\alpha-e_i,k}
		\notag \\ & 
		\qquad \qquad \qquad \qquad \qquad 
		\qquad \qquad
		+
		W
		\chi_{p-2-k,\alpha,k} 
		-
		\sum_{r=|\alpha|}^{p-2}
		\mu_{p-2-r}
		\chi_{r-k,\alpha,k} 
		\biggr) \biggr\vert_{y=\frac x\ep}
		\!.
	\end{align}
	Our ansatz would obviously be very good if the term inside parenthesis on the right side of~\eqref{e.expansa} was zero (or at least very small), for every $(p,\alpha,k)$. But before aiming for such a lofty goal, we first insist that it be \emph{macroscopic}, that is, independent of the variable~$y$. This is the same as demanding that it be equal to its mean over~$y\in\TT^d$. 
	In view of~\eqref{e.chi.00k.meanzero}, this is 
	\begin{align*}
		& -\nabla_y \cdot \mathbf{f}_{p-k,\alpha,k}
		- \nabla_x \cdot  \mathbf{f}_{p-1-k,\alpha,k}
		-
		\sum_{i=1}^d
		\mathbf{f}_{p-1-k,\alpha-e_i,k}
		+
		W
		\chi_{p-2-k,\alpha,k} 
		-
		\sum_{r=|\alpha|}^{p-2}
		\mu_{p-2-r}
		\chi_{r-k,\alpha,k} 
		\notag \\ & \quad 
		=
		-\nabla_x \cdot \langle \mathbf{f}_{p-1-k,\alpha,k} \rangle
		-
		\sum_{i=1}^d
		\langle
		\mathbf{f}_{p-1-k,\alpha-e_i,k}
		\rangle
		+ W(x) {\bf{1}}_{\{ p=k+2, \alpha = 0 \}}
		-
		\sum_{r=|\alpha|}^{p-2}
		\mu_{p-2-r}
		{\bf{1}}_{\{r=k,\alpha=0\}}
		\,.
	\end{align*}
	We can rewrite this, using the notation~\eqref{e.mathring} and substituting~$q=p-k$,, as 
	\begin{align}
		\label{e.correqs}
		-\nabla_y \cdot \a \nabla_y \chi_{q,\alpha,k}
		&
		=
		\nabla_y \cdot \a \nabla_x \chi_{q-1,\alpha,k} 
		+
		\sum_{i,j=1}^d \partial_{x_i} \bigl( \a_{ij} \chi_{q-1,\alpha-e_j,k} \bigr)
		\notag \\ & \quad
		+ 
		\nabla_x \cdot  \mathring{\mathbf{f}}_{q-1,\alpha,k}
		+
		\sum_{i=1}^d
		\mathring{\mathbf{f}}_{q-1,\alpha-e_i,k}
		-
		W \bigl( 
		\chi_{q-2,\alpha,k} -  {\bf{1}}_{\{ q=2, |\alpha| = 0 \}} \bigr)
		\notag \\ & \quad
		+
		\sum_{r=|\alpha|}^{q+k-2}
		\mu_{q+k-2-r}
		\bigl( \chi_{r-k,\alpha,k} - {\bf{1}}_{\{ r=k, |\alpha| = 0 \}} \bigr)
		\,.
	\end{align}
	This is the sequence of corrector equations we have been seeking. Observe that this equation involves the constants $\{ \mu_k \,: \, k\in \{ 0,\ldots,q+k-m-2\}$, which are a priori unknown. This is because these corrector equations have to be understood as \emph{coupled to the macroscopic equations}, which we introduce next. 
	Define the homogenized coefficients by
	\begin{equation*}
		\overline{\a}_{q,\alpha,k}
		:=
		\langle
		\mathbf{f}_{q,\alpha,k}
		\rangle\,.
	\end{equation*}
	We note that~$\overline{\a}_{q,\alpha,k}$ is~$\R^d$--valued and depends on the macroscopic variable~$x$. 
	Assuming for the moment that the corrector equation~\eqref{e.correqs} is satisfied, we insert it back into the expression~\eqref{e.expansa} to obtain
	\begin{align}
		\label{e.macroscopic.derivation}
		\lefteqn{
			\bigl( -\nabla \cdot \a_\ep \nabla + W - \lambda_\ep \bigr) u_\ep 
		} \quad & 
		\notag \\ & 
		=
		\sum_{p=0}^\infty
		\sum_{k=0}^p
		\sum_{m=0}^{p-k}
		\sum_{|\alpha|=m} \!\!
		\ep^{p-2} \partial_x^\alpha U_k 
		\biggl( 
		- \nabla_x \cdot  \overline{\a}_{p-1-k,\alpha,k}
		-
		\sum_{i=1}^d
		\overline{\a}_{p-1-k,\alpha-e_i,k}
		\notag \\ & 
		\qquad \qquad \qquad \qquad \qquad 
		\qquad \qquad
		+
		W
		{\bf{1}}_{p= k+2,|\alpha|=0} 
		-
		\sum_{r=m}^{p-2}
		\mu_{p-2-r}
		{\bf{1}}_{r = k,|\alpha|=0} 
		\biggr) 
		\notag \\ & 
		=
		-\nabla_x \cdot 
		\sum_{p=0}^\infty
		\sum_{k=0}^p
		\sum_{m=0}^{p-k}
		\sum_{|\alpha|=m} \!\!
		\ep^{p}  
		\overline{\a}_{p+1-k,\alpha,k} \,
		\partial_x^\alpha U_k 
		+
		\sum_{p=0}^\infty \ep^p W U_p
		-
		\sum_{p=0}^\infty\sum_{k=0}^p
		\ep^p \mu_{p-k} U_k
		\notag \\ & 
		= 
		\sum_{p=0}^\infty 
		\ep^p
		\biggl( 
		-\nabla_x \cdot 
		\Bigl( \sum_{k=0}^p
		\sum_{m=0}^{p-k}
		\sum_{|\alpha|=m} \!\!
		\overline{\a}_{p+1-k,\alpha,k} \,
		\partial_x^\alpha U_k 
		\Bigr)
		+
		W U_p
		-
		\sum_{k=0}^p
		\mu_{p-k} U_k
		\biggr)
		\,.
	\end{align}
	This is the macroscopic equation, or to be more precise, it encodes a sequence of macroscopic equations---one for every $p\in\N_0$:
	\begin{equation}
		\label{e.macroscopic.p}
		-\nabla_x \cdot 
		\biggl( \sum_{k=0}^p
		\sum_{m=0}^{p-k}
		\sum_{|\alpha|=m} \!\!
		\overline{\a}_{p+1-k,\alpha,k} \,
		\partial_x^\alpha U_k 
		\biggr)
		+
		W U_p
		=
		\sum_{k=0}^p
		\mu_{p-k} U_k
		\,.
	\end{equation}
	If we can find~$\{ \mu_k\}$ and~$\{ U_k\}$ solving the system~\eqref{e.macroscopic.p}, with correctors~$\chi_{q,\alpha,k}$ solving~\eqref{e.correqs}, then we will be able to show that the function~$u_\ep$ is close to a true eigenfunction of the operator~$\mathcal{L}_\ep$, with eigenvalue close to~$\lambda_\ep$.
	
	\smallskip
	
	Before we consider the hierarchy of equations in~\eqref{e.macroscopic.p} in more detail, we make some remarks about the first few homogenized coefficients. First, for every~$k\in\N$ and $j\in\{1,\ldots,d\}$, the corrector~$\chi_{1,e_j,k}$ is the usual first-order corrector in homogenization theory. In particular, it is independent of~$x$ and solves the equation
	\begin{equation*}
		-\nabla \cdot \a (e_j +\nabla \chi_{1,e_j,k})  = 0\,.
	\end{equation*}
	We deduce that, for each~$k\in\N$, the~$(i,j)$th entry of the usual homogenized matrix~$\ahom$ in elliptic homogenization theory is equal to the~$i$th component of the coefficient~$\ahom_{1,e_j,k}$ defined above (which in particular does not depend on~$x$):
	\begin{equation*}
		\ahom_{ij} = \bigl( \ahom_{1,e_j,k} \bigr)_i\,.
	\end{equation*}
	Recalling also that~$\mu_0 = \lambda_0$, we may therefore write~\eqref{e.macroscopic.p} as
	\begin{equation}
		\label{e.macroscopic.p.aspert}
		-\nabla \cdot \ahom \nabla U_p
		+ (W- \lambda_0) U_p 
		=
		\nabla_x \cdot 
		\biggl( \sum_{k=0}^{p-1}
		\sum_{m=0}^{p-k}
		\sum_{|\alpha|=m} \!\!
		\overline{\a}_{p+1-k,\alpha,k} \,
		\partial_x^\alpha U_k 
		\biggr)
		+
		\sum_{k=0}^{p-1}
		\mu_{p-k} U_k
		\,.
	\end{equation}
	This equation gives us hope that we can  solve for~$U_p$, provided that we have already determined~$U_0,\ldots,U_{p-1}$ and~$\mu_1,\ldots,\mu_{p}$ as well as~$\ahom_{q,\alpha,k}$ for every~$(q,\alpha,k)$ with~$2 \leq q+k \leq p+1$ and~$0\leq |\alpha| \leq q-1$. 
	We will require that~$U_k$ be orthogonal to~$U_0$ in~$L^2(\R^d)$ for every~$k\geq 1$:
	\begin{equation}
		\label{e.Uk.U0.perp}
		\int_{\R^d} U_k(x) U_0(x)\,dx = 0\,, \quad \forall k\geq 1\,.
	\end{equation}

	\smallskip
	
	We can determine the values of~$\{ \mu_k \}$ be requiring that~\eqref{e.macroscopic.p.aspert} be solvable; that is, the right side of~\eqref{e.macroscopic.p.aspert} must be orthogonal to~$U_0$. This yields a formula for~$\mu_p$:
	\begin{equation*}
		\mu_p = 
		\int_{\R^d} 
		\mu_p U_0^2
		=
		\sum_{k=0}^{p-1} 
		\mu_{p-k} 
		\int_{\R^d} 
		U_k U_0
		=
		\sum_{k=0}^{p-1}
		\sum_{m=0}^{p-k}
		\sum_{|\alpha|=m} \!\!
		\int_{\R^d}
		\overline{\a}_{p+1-k,\alpha,k} \,
		\partial_x^\alpha U_k 
		\cdot \nabla_x U_0
		\,.
	\end{equation*}
	This gives a formula for~$\mu_p$ in terms of~$U_0,\ldots,U_{p-1}$ and the tensors~$\ahom_{q,\alpha,k}$ with indices~$(q,\alpha,k)$ satisfying~$2 \leq q+k \leq p+1$ and~$0\leq |\alpha| \leq q-1$.
	
	\subsection{Rigorous construction with estimates}
	\label{ss.rigorous.simple}

	In this subsection we use the foregoing formal argument to provide a rigorous inductive proof to show that the higher order correctors $\{\bchi_{q,m,k}\}$ and effective tensors~$\{ \bba_{q,m,k}\}$ are well-defined for~$q,m,k \in \mathbb{N}_0.$ This amounts to showing that we can solve the corrector equations in some appropriate order so that all the terms on the right side have been already previously defined. 
	
	\smallskip
	
	In this section we employ tensor notation instead of the multi-index notation from the previous section. All tensors are actually indexed by these multi-indices, and the precise meaning of the (implicit) tensor contractions (sometimes denoted by~``$:$'') can be inferred from the computations in the previous section. However, these turn out to be not very important for the computations that follows, and so for convenience we use the more compact tensor notation. 
	
	\smallskip
	
	As in the rest of this section, we assume that~$\lambda_0$ is a simple eigenvalue of the homogenized operator $\mathcal{L}_0 := -\nabla \cdot \bba \nabla  + W$ and~$U_0$ is a corresponding eigenfunction such  that $\| U_0\|_{L^2(\RR^d)} = 1.$ 
	
	\smallskip
	
	\textit{Base case.} 
	We initialize the construction by making the following definitions for the first few correctors and effective parameters:
	\begin{itemize}
		\item We set $\mu_0 :=\lambda_0$.
		\item We take $\bchi_{0,0,k} := 1$, as well as $\bbf_{0,0,k}:=0$ and $\bba_{0,0,k}:=0$, for every $k\in\mathbb{N}_0$.
		
		\item $\bchi_{q,0,k} := 0$, as well as $\bbf_{q,0,k}:=0$ and $\bba_{q,0,k}:=0$, for every $q,k\in\mathbb{N}$, $k\in\mathbb{N}_0$.
		
		\item $\bchi_{q,m,k} := 0$, as well as $\bbf_{q,m,k}:=0$ and $\bba_{q,m,k}:=0$, for every $q,m,k\in\mathbb{N}_0$ with $m>q$.
		
		\item We define~$\bchi_{1,1,k}$, for each $k\in \mathbb{N}_0$, to be the usual first-order corrector in classical periodic homogenization, that is, the solution of
		\begin{equation} \label{e.hocorr11} 
			\left\{
			\begin{aligned}
				&-\nabla_y \cdot \bha \nabla_y \bchi_{1,1,k} = \nabla_y \cdot (\bha \otimes 1)  \quad \mbox{ in } \mathbb{T}^d,\\
				& \langle \bchi_{1,1,k} \rangle = 0.
			\end{aligned} \right.
		\end{equation}
		We also define 
		\begin{equation*}
			\bbf_{1,1,k}:= \bha (I+\nabla_y\bchi_{1,1,k})
			\quad \mbox{and} \quad
			\bba_{1,1,k}:=\langle \bbf_{1,1,k} \rangle.
		\end{equation*}
		Observe that $\bba_{1,1,k}$ corresponds to the usual homogenized matrix in classical homogenization~$\ahom$
		Note that~$\bchi_{1,1,k}$,~$\bbf_{1,1,k}$ and~$\bba_{1,1,k}$ depend
		neither on the index~$k$ nor the slow variable~$x$.

		\item We define~$\bchi_{2,1,k} := 0$, as well as $\bbf_{2,1,k}:=0$ and $\bba_{2,1,k}:=0$, for every $k\in\mathbb{N}_0$.
		
		\item We define $\bchi_{2,2,k}$, for each $k\in \mathbb{N}_0$, to be the solution of
		\begin{equation} \label{e.hocorr1} \left\{
			\begin{aligned}
				&-\nabla_y \cdot \bha \nabla_y \bchi_{2,2,k} = \nabla_y \cdot (\bha \otimes \bchi_{1,1,k}) +  \bha \otimes 1 + \bha \nabla_y \bchi_{1,1,k} -\bba_{1,1,k} \quad \mbox{ in } \mathbb{T}^d,\\
				& \langle \bchi_{2,2,k} \rangle = 0.
			\end{aligned} \right.
		\end{equation}
		We also define 
		\begin{equation*}
			\bbf_{2,2,k}:= \bha \nabla_y \bchi_{2,2,k} + \bha\otimes \bchi_{1,1,k}  
			\quad \mbox{and} \quad
			\bba_{2,2,k}:=\langle \bbf_{2,2,k}  \rangle.
		\end{equation*}
		Note that~$\bchi_{2,2,k}$ is the second-order corrector, and $\bba_{2,2,k}$ is the usual third-order homogenized matrix, in classical homogenization. 
		In particular,~$\bchi_{2,2,k}$,~$\bbf_{2,2,k}$ and~$\bba_{2,2,k}$ depend neither on the index~$k$ nor the slow variable~$x$, and the symmetric part of~$\bba_{2,2,k}$ vanishes. 
		
	\end{itemize}

	\textit{Induction step.} 
	Let us suppose that, for some integer~$K\in\mathbb{N}$, $K\geq 2$, we have defined~$\bchi_{q,m,k},$ the associated fluxes~$\bbf_{q,m,k}$ and homogenized coefficients~$\bba_{q,m,k}$ for indices
	\begin{equation} \label{e.qmk.induction}
		(q,m,k) \in 
		J(K)
		:=
		\bigl\{ (q,m,k) \,:\, m \in \mathbb{N}_0, \,  0 \leq k \leq K, \,  0 \leq q \leq K+2-k \bigr\}
		\,,
	\end{equation}
	as well as~$\mu_k$ and $U_k$ for every $k\in\{0,\ldots,K-2\}$.
	Note that for $K=2$ we defined these objects in the base case above. 
	
	\smallskip
	
	We then make the following definitions.
	
	\begin{itemize}
		\item We define $\mu_{K-1}$ by
		\begin{equation} \label{e.muk-1def}
			\mu_{K-1} := 
			\sum_{k=0}^{K-2}
			\sum_{m={1}}^{K-k} 
			\int_{\RR^d} \bba_{K-k,m,k} : \nabla^m U_k : \nabla U_0 
			\,.
		\end{equation}
		
		\item We define the macroscopic function $U_{K-1}$ to be the unique solution of 
		\begin{equation}
			\label{e.Uk1.def}
			(\mathcal{L}_0 - \mu_0)U_{K-1} = \sum_{k=0}^{K-2}
			\mu_{K-1-k}U_k
			+
			\nabla \cdot 
			\sum_{k=0}^{K-2}
			\sum_{m={1}}^{K-k} \bba_{K-k,m,k}:\nabla^m U_k
			\quad \mbox{in} \ \RR^d \,.
		\end{equation}
		which is orthogonal to $U_0$ in $L^2(\RR^d)$. Note that this indeed uniquely determines $U_{K-1}$ since~$\mu_{K-1}$ was chosen above so that the right side of~\eqref{e.Uk1.def} is orthogonal to the eigenspace of $\mathcal{L}_0$ corresponding to $\mu_0$. 
		
		\item The functions $\bchi_{2,m,k}$, as well as $\bbf_{2,m,k}$ and $\bba_{2,m,k}$, have already been defined in the base case above, for every $k$ and in particular for $k=K+1$. The only nonzero function among these is $\bchi_{2,2,k}$ which was defined in~\eqref{e.hocorr1}. 
		
		\item We define $\bchi_{q,m,k}$ 
		for each~$(q,m,k) \in J(K+1)\setminus J(K),$  $q\neq 2$, to be the solution of  
		\begin{equation} \label{e.hocorrq} \left\{
			\begin{aligned}
				&    -\nabla_y \cdot \bha \nabla_y \bchi_{q,m,k}&&  \!\!  \!\!  = \nabla_y \cdot (\bha \otimes \bchi_{q-1,m-1,k})  + \nabla_y \cdot \bha\nabla_x \bchi_{q-1,m,k}  
				\\ &&& \qquad      
				+ \mathring{\bbf}_{q-1,m-1,k}  + \nabla_x \cdot \mathring{\bbf}_{q-1,m,k} 
				\\ &&& \qquad  
				- W(x) \mathring{\bchi}_{q-2,m,k} 
				+ \sum_{r=m+k}^{q-2+k} \mu_{q-2+k-r} \mathring{\bchi}_{r-k,m,k} \quad \quad \mbox{ in } \mathbb{T}^d,\\
				& \ \    \langle \bchi_{q,m,k} \rangle = 0.
			\end{aligned} \right.
		\end{equation}
		Note that~$q\neq 2$ implies~$q\geq 3$ and thus~$k\leq K$. Therefore all the terms on the right side have been defined already, by the induction hypothesis, because all the index triples belong to~$J(K)$
		and the highest index~$i$ of~$\mu_i$ that appears in \eqref{e.hocorrq} is~$i=K-1$, which we have above defined in~\eqref{e.muk-1def}. Moreover, the right side of the equation has zero mean and so the equation is uniquely solvable. 
		We then define 
		\begin{equation} \label{e.bbf.rig}
			\bbf_{q,m,k} := \bha \nabla_x \bchi_{q-1,m,k} + \bha \otimes \bchi_{q-1,m-1,k} + \bha \nabla_y \bchi_{q,m,k},
		\end{equation}
		and then
		\begin{equation} \label{e.aqmk.rig}
			\bba_{q,m,k}(x) := \langle \bbf_{q,m,k}(x,\cdot) \rangle.
		\end{equation}
		
	\end{itemize}
	We have therefore defined $\bchi_{q,m,k},$ the associated fluxes $\bbf_{q,m,k}$ and homogenized coefficients $\bba_{q,m,k}$ for every $(q,m,k)\in J(K+1)$. By induction, this concludes the construction of the correctors, homogenized tensors.

	\subsection{Regularity estimates}
	
	In this section we study the regularity, with respect to the slow variable~$x$, of the objects defined above in Section~\ref{ss.rigorous.simple}. 
	This amounts to going over the entire recursive construction, step-by-step, and estimating all~$x$ derivatives of each newly defined object. This is a rather laborious and tedious but straightforward process. 
	
	\smallskip

	The regularity of the effective homogenized tensors~$\bba_{q,m,k}$ will follow from the regularity of the fluxes~$\bbf_{q,m,k}$, and the latter will be obtained rather easily from the product rule and the regularity in~$x$ of the correctors~$\bchi_{q,m,k}$. The latter will be obtained by repeatedly differentiating the equation for the correctors and using the regularity of all objects previously defined. Fortunately, every term in~\eqref{e.hocorrq}, with one exception, has only one factor with~$x$ dependence. The exception is the term $W(x) \bchi_{q-2,m,k}$, which is simple to differentiation and estimate. Therefore the computation is not overly involved. 
	
	We begin with a preliminary lemma that we repeatedly invoke in our bounds for $\bchi_{q,m,k},$ and hence the corrections $\bba_{q,m,k}$, $\mu_k$ and $U_k.$ 
	
	\begin{lemma}
		\label{l.cacciop.per}
		Let $\Phi:\RR^d \times \mathbb{T}^d \to \RR$ be the unique periodic (in $y$) solution to 
		\begin{align}
			-\nabla_y \cdot \bha \nabla_y \Phi = \nabla_y \cdot F + G, \quad \quad \langle \Phi(x,0) \rangle = 0,
		\end{align}
		where $F, G \in H_{per}^1(\mathbb{T}^d, C^\infty(\RR^d))$ with $\langle G(x,\cdot) \rangle = 0$ for every $x.$ Then for every $x \in \RR^d$ 
		\begin{align}
			\|\nabla_y \partial_x^\alpha \Phi(x,\cdot)\|_{L^2(\mathbb{T}^d)} \leqslant  C\bigl( \|\partial_x^\alpha F(x,\cdot)\|_{L^2(\mathbb{T}^d)} + \|\partial_x^\alpha G(x,\cdot)\|_{L^2(\mathbb{T}^d)}\bigr),
		\end{align}
		for a universal constant $C (\theta,d) > 0$ , and for any multiindex $\alpha \in \NN_0^d.$
	\end{lemma}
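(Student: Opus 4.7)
The key observation is that the coefficient field $\bha$ depends only on the fast variable $y$, so differentiation in $x$ commutes with the operator $-\nabla_y \cdot \bha \nabla_y$. Therefore, for any multiindex $\alpha \in \NN_0^d$, applying $\partial_x^\alpha$ to the equation yields
\begin{equation*}
    -\nabla_y \cdot \bha(y) \nabla_y \partial_x^\alpha \Phi(x,y) = \nabla_y \cdot \partial_x^\alpha F(x,y) + \partial_x^\alpha G(x,y),
\end{equation*}
with $\langle \partial_x^\alpha G(x,\cdot) \rangle = 0$ (because $\langle G(x,\cdot) \rangle = 0$ for every $x$, which survives $x$-differentiation), and $\langle \partial_x^\alpha \Phi(x,\cdot) \rangle = 0$ by the same reasoning applied to the normalization of $\Phi$. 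Hence the problem reduces, for each fixed $x$, to an elementary periodic energy estimate for the equation $-\nabla_y \cdot \bha \nabla_y \Psi = \nabla_y \cdot \tilde F + \tilde G$ on $\TT^d$ with $\langle \tilde G\rangle = 0$ and $\langle \Psi\rangle=0$.

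For this reduced problem, I would test with $\Psi$ itself and integrate by parts to get
\begin{equation*}
    \int_{\TT^d} \nabla_y \Psi \cdot \bha \nabla_y \Psi \, dy = -\int_{\TT^d} \tilde F \cdot \nabla_y \Psi\, dy + \int_{\TT^d} \tilde G\, \Psi\, dy.
\end{equation*}
Using \eqref{h.ell}, the left side bounds $\|\nabla_y \Psi\|_{L^2(\TT^d)}^2$ from below. For the first term on the right, Cauchy--Schwarz with Young's inequality gives $\tfrac14\|\nabla_y\Psi\|_{L^2}^2 + C\|\tilde F\|_{L^2}^2$. For the second term, since $\langle \Psi\rangle = 0$ I apply the periodic Poincaré inequality $\|\Psi\|_{L^2(\TT^d)} \leq C\|\nabla_y\Psi\|_{L^2(\TT^d)}$, followed by Cauchy--Schwarz and Young to produce $\tfrac14\|\nabla_y\Psi\|_{L^2}^2 + C\|\tilde G\|_{L^2}^2$. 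Absorbing the two $\|\nabla_y\Psi\|_{L^2}^2$ terms on the left yields the desired bound $\|\nabla_y\Psi\|_{L^2(\TT^d)} \leq C(\|\tilde F\|_{L^2} + \|\tilde G\|_{L^2})$ with $C = C(\theta,d)$.

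Applying this with $\Psi = \partial_x^\alpha \Phi(x,\cdot)$, $\tilde F = \partial_x^\alpha F(x,\cdot)$ and $\tilde G = \partial_x^\alpha G(x,\cdot)$ delivers the stated inequality pointwise in $x$. There is no genuine obstacle here: the only mild subtlety is remembering that $\bha$ is $x$-independent, so commuting $\partial_x^\alpha$ past the operator is lossless and no commutator terms appear---this is precisely why a single energy estimate suffices for arbitrarily high $|\alpha|$, with a constant depending only on $(\theta,d)$ and not on $\alpha$.
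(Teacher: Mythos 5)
Your proposal is correct and matches the paper's proof essentially verbatim: both differentiate the equation in $x$ (exploiting the $x$-independence of $\bha$), set $v = \partial_x^\alpha \Phi$, test with $v$, integrate by parts, and close with ellipticity, Cauchy--Schwarz, and Poincar\'e. The only difference is that you spell out the Young-inequality absorption that the paper leaves implicit.
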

	\begin{proof}
		We set $v:= \partial_x^\alpha \Phi,$ for any $\alpha \in \mathbb{N}_0^d.$ Then, it is clear that $\langle v(x,\cdot) \rangle = 0$ by the choice of normalization in $\Phi.$ Moreover, $v$ satisfies 
		\begin{equation}
			\begin{aligned}
				-\nabla_y \cdot \bha \nabla_y v = \nabla_y\cdot \partial_x^\alpha F + \partial_x^\alpha G.
			\end{aligned}
		\end{equation}
		Multiplying the equation by $v$, integrating by parts on $\mathbb{T}^d$, using the ellipticity of $\bba$ and Cauchy-Schwarz and Poincar\'{e} inequalities, we obtain the desired estimate. 
	\end{proof}

	We are now ready to prove regularity estimates for each object constructed  in Section~\ref{ss.rigorous.simple}.
	
	\begin{proposition} \label{p.exp.bounds}
		There exists $C(d,\theta)<\infty$
		such that, for every~$q,m,k\in\NN$ with~$m\leq q$,
		\begin{equation}
			\label{e.reg.indyhyp}
			\left\{
			\begin{aligned}
				& |\mu_k| 
				\leq 
				\frac{\lambda^{\frac{3k}2}}{{\gamma(\lambda)^{k-1}}}
				\exp \bigl( C^{k+1} \bigr) \,,\\
				& \nnn U_k \nnn_{\lambda,C^k}
				\leq 
				\frac{\lambda^{\frac{3k}2}}{{\gamma(\lambda)^{k}}}
				\exp \bigl( C^{k+1} \bigr) \,, \\
				& \frac1{(q+l)!}\sup_{x \in \RR^d} 
				\bigl( \lambda +|x|^2\bigr)^{-\frac 12(q-l)}
				\| \nabla_x^l \bchi_{q,m,k}(x,\cdot) \|_{{H^1}(\mathbb{T}^d)} 
				\leq 
				{\frac{\exp ( C^{q-m+k} )}{\gamma(\lambda)^{(q-2-m)_+}}}
				\,,
			\end{aligned}
			\right.
		\end{equation}
		where, we recall that $\gamma(\lambda)$ is the spectral gap of the simple eigenvalue $\lambda. $ 
	\end{proposition}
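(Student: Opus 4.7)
The plan is to induct on $K$ using the same stratification of indices used in the construction of Section~\ref{ss.rigorous.simple}: having already verified \eqref{e.reg.indyhyp} for all triples $(q,m,k) \in J(K)$ and for $\mu_k, U_k$ with $k \leq K-2$, I will first estimate the new correctors $\bchi_{q,m,k}$ with $(q,m,k) \in J(K+1)\setminus J(K)$, then deduce bounds on the new fluxes and homogenized tensors, then bound $\mu_{K-1}$ via its explicit formula \eqref{e.muk-1def}, and finally bound $U_{K-1}$ using Lemma~\ref{l.reg}. The base case for $K=2$ is handled by direct inspection: $\mu_0 = \lambda$ trivially satisfies the bound, $U_0$ is the normalized eigenfunction (for which the triple norm is controlled via Corollary~\ref{l.L2decay.eigf} together with Lemma~\ref{l.decayests} applied to $f \equiv 0$), and the correctors $\bchi_{1,1,k}$, $\bchi_{2,2,k}$ are $x$-independent so their regularity estimates reduce to classical periodic Caccioppoli bounds of Lemma~\ref{l.cacciop.per}.

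For the corrector step, I apply $\nabla_x^\ell$ to equation \eqref{e.hocorrq} and invoke Lemma~\ref{l.cacciop.per} to control $\|\nabla_y \nabla_x^\ell \bchi_{q,m,k}(x,\cdot)\|_{L^2(\TT^d)}$ by the $L^2(\TT^d)$ norm of $\nabla_x^\ell$ applied to the right-hand side. Every term on the right-hand side of \eqref{e.hocorrq} is either (i) a product of $\a$ with a previously defined corrector or flux of total complexity strictly less than $q-m+k$, (ii) the term $W(x)\mathring{\bchi}_{q-2,m,k}$, which via the Leibniz rule and the bound \eqref{e.Whypothesis.temp} on $W$ produces the expected polynomial growth $(\lambda+|x|^2)^{(q-\ell)/2}$, or (iii) the $\mu$-terms, where each factor $\mu_{q-2+k-r}$ contributes (by the induction hypothesis) at most one additional power of $\gamma(\lambda)^{-1}$. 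Combining these via Lemma~\ref{l.cacciop.per}, Poincar\'e, and the product rule yields the corrector bound in \eqref{e.reg.indyhyp}, provided $C$ is chosen large enough so that the combinatorial sums over the finitely many triples contributing at each level are absorbed by the growth $\exp(C^{q-m+k})$.

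The flux bound $\bbf_{q,m,k}$ from \eqref{e.bbf.rig} is a direct consequence of the corrector bound together with boundedness of $\a$, and $\bba_{q,m,k}(x) = \langle \bbf_{q,m,k}(x,\cdot)\rangle$ inherits this bound. Then $\mu_{K-1}$, given by the explicit formula \eqref{e.muk-1def}, is a finite sum of pairings $\int \bba_{K-k,m,k} : \nabla^m U_k : \nabla U_0\,dx$; Cauchy--Schwarz together with the definition of the triple norm and the induction hypothesis on $U_k$ yields the claimed bound on $|\mu_{K-1}|$. Note that compared with $U_k$ the bound for $\mu_{K-1}$ loses one power of $\gamma(\lambda)^{-1}$; this is precisely because the pairing with $\nabla U_0$ does not use the spectral solvability step.

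For the macroscopic function $U_{K-1}$, I apply Lemma~\ref{l.reg} to equation \eqref{e.Uk1.def}: choosing $\Theta = C^{K-1}$ for the universal constant $C$ given by Lemma~\ref{l.reg}, I need to bound the triple norm of the right-hand side, namely
\begin{equation*}
\sum_{k=0}^{K-2} \mu_{K-1-k} U_k + \nabla \cdot \sum_{k=0}^{K-2}\sum_{m=1}^{K-k} \bba_{K-k,m,k}:\nabla^m U_k.
\end{equation*}
Each summand is a product of a quantity bounded in triple norm (the $U_k$ or its derivatives) and a smooth-in-$x$ coefficient ($\mu$ or $\bba$) whose $x$-derivatives are controlled by the corrector estimates just established; verifying that this product respects the weighted norm reduces to combinatorial manipulations involving the weights $(\lambda+|x|^2)^n$, $\Lambda_2^m n!(m-1)!$ and the Gaussian weight, using that the bound on $\bba_{q,m,k}$ contains a factor $(\lambda+|x|^2)^{(q-\ell)/2}$ per derivative. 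Lemma~\ref{l.reg} then divides by $\gamma(\lambda)$, producing the extra factor of $\gamma(\lambda)^{-1}$ that upgrades the $\mu_{K-1}$ bound to the $U_{K-1}$ bound.

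\textbf{Main obstacle.} The main difficulty is neither elliptic regularity nor spectral theory but rather the \emph{bookkeeping of the doubly recursive structure}: the right-hand side of \eqref{e.hocorrq} involves finitely many terms of strictly lower complexity, each carrying an exponential factor $\exp(C^{\bullet})$, and one must verify that the resulting sum still fits inside $\exp(C^{q-m+k})$ --- equivalently, that the constant $C$ can be chosen to dominate the combinatorial number of contributing triples at each level and the constants from Lemmas~\ref{l.cacciop.per} and~\ref{l.reg}. Tracking simultaneously the polynomial-in-$(\lambda+|x|^2)$ factors, the factorials in the norm \eqref{e.weightnorm.2}, and the powers of $\gamma(\lambda)^{-1}$ arising from every invocation of Lemma~\ref{l.reg} requires careful propagation through the recursion, and this is the place where the argument is most delicate.
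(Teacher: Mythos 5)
Your proof takes essentially the same inductive approach as the paper (Caccioppoli lemma for the correctors, spectral solvability lemma for $U_{K-1}$, and careful tracking of the doubly exponential constants through the recursion), but the \emph{ordering} of your induction step contains a genuine circular dependency. You propose to bound the new correctors $\bchi_{q,m,k}$ with $(q,m,k)\in J(K+1)\setminus J(K)$ first and only afterwards bound $\mu_{K-1}$; to control the $\mu$-terms on the right-hand side of \eqref{e.hocorrq} you invoke ``the induction hypothesis,'' which you state as covering $\mu_k$ for $k\leq K-2$. But the corrector equation for the new boundary triples with $q\geq 3$ involves $\mu_{q-2-m}$, and as the paper notes explicitly in Section~\ref{ss.rigorous.simple}, ``the highest index $i$ of $\mu_i$ that appears in \eqref{e.hocorrq} is $i=K-1$.'' So the quantity $\mu_{K-1}$ --- which your plan bounds only in a later step --- is already needed to carry out your first step. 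The paper avoids this by reversing the order: Step~2 bounds the fluxes $\bbf_{q,m,k}$ for $J(K)$ using only the inductive corrector bounds; Step~3 bounds $\mu_{K-1}$ from the explicit formula \eqref{e.muk-1def}, which involves only $\bba_{K-k,m,k}$ with $(K-k,m,k)\in J(K)$; Step~4 records the product-rule estimate $\nnn\nabla\cdot(\bba_{K-k,m,k}:\nabla^m U_k)\nnn_{\lambda,\Theta}$ (which you gesture at but do not write out); Step~5 bounds $U_{K-1}$ via Lemma~\ref{l.reg}; and only then, in Step~6, does it bound the new correctors, at which point $\mu_{K-1}$ is available. If you bound $\mu_{K-1}$ and $U_{K-1}$ \emph{before} estimating the new correctors, the rest of your plan matches the paper's proof.
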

	\begin{proof}
		We argue by induction, following the same procedure as in the construction of the objects.
		
		\smallskip
		
		\emph{Step 1.} 
		We begin by noticing that each of the objects introduced in the ``base case'' of the construction; in particular, for every $(q,m,k) \in J(2)$ and $(q,m,k) \in \{ (2,2,k) \,:\, k\in\NN\}$, we have that estimates~\eqref{e.reg.indyhyp} are satisfied. 
		
		\smallskip
		
		Turning to the the induction step, we suppose that~$A,B\in[1,\infty)$ and~$K\in\NN$ are such that,
		for every~$(q,m,k) \in J(K)$,
		\begin{equation}
			\label{e.reg.indyhyp.hyp}
			\left\{
			\begin{aligned}
				& |\mu_k| 
				\leq 
				\frac{\lambda^{\frac{3k}2}}{{\gamma(\lambda)^{k-1}}}
				\exp \bigl( A^{k+1} \bigr) \,,\\
				& \nnn U_k \nnn_{\lambda,B^k}
				\leq 
				\frac{\lambda^{\frac{3k}2}}{{\gamma(\lambda)^{k}}}
				\exp \bigl( A^{k+1} \bigr) \,, \\
				& \frac1{(q+l)!}\sup_{x \in \RR^d} 
				\bigl( \lambda +|x|^2\bigr)^{-\frac 12(q-l)}
				\| \nabla_x^l \bchi_{q,m,k}(x,\cdot) \|_{{H^1}(\mathbb{T}^d)} 
				\leq 
				{\frac{\exp ( A^{q-m+k} )}{\gamma(\lambda)^{(q-2-m)_+}}}
				\,,
			\end{aligned}
			\right.
		\end{equation}
		{with the last estimate  holding for every $l \in \NN_0$.}
		Recall~$J(K)$ is defined in~\eqref{e.qmk.induction}.
		We will show that, if~$A$ and~$B$ are chosen sufficiently large, depending only on~$(d,\theta)$, then the same estimates are valid for~$(q,m,k) \in J(K+1)$.
		
		\smallskip
		
		\emph{Step 2.}
		We record estimates for $\bbf_{q,m,k}$. The claim is that, for every $(q,m,k) \in J(K)$, and for every $l \in \NN_0,$
		\begin{equation}
			\label{e.fbb.ind.bound}
			\frac{1}{(q+l)!}
			\sup_{x\in\RR^d} (\lambda+|x|^2)^{-\frac12(q-l)} \| \nabla_x^l \bbf_{q,m,k}(x,\cdot)\|_{L^2(\TT^d)}
			\leq
			\frac{1}{\gamma(\lambda)^{(q-2-m)_+}}\exp \bigl( A^{q-m+k} \bigr)
			\,.
		\end{equation}
		Compute, using the induction hypothesis, for every $(q,m,k)\in J(K)$, 
		\begin{align*}
			\lefteqn{
				\| \nabla_x^l \bbf_{q,m,k}(x) \|_{L^2(\TT^d)}
			} \qquad & 
			\\ &
			\leq
			\| 
			\bha \nabla_x^{l+1} \bchi_{q-1,m,k}
			\|_{L^2(\TT^d)}
			+
			\| 
			\bha \otimes 
			\nabla_x^l\bchi_{q-1,m-1,k}
			\|_{L^2(\TT^d)}
			+
			\| 
			\bha \nabla_y \nabla_x^l \bchi_{q,m,k}
			\|_{L^2(\TT^d)}
			\\ & 
			\leq
			(q+l)! \frac{1}{\gamma(\lambda)^{(q-3-m)_+}} (\lambda+|x|^2)^{\frac12(q-l-2)} 
			\exp \bigl( A^{q-m+k-1} \bigr)
			\\ & \qquad 
			+ (q+l-1)!\frac{1}{\gamma(\lambda)^{(q-2-m)_+}} (\lambda+|x|^2)^{\frac12(q-l-1)} 
			\exp \bigl( A^{q-m+k} \bigr)
			\\ & \qquad 
			+
			(q+l)!\frac{1}{\gamma(\lambda)^{(q-2-m)+}}(\lambda+|x|^2)^{\frac12(q-l)} 
			\exp \bigl( A^{q-m+k} \bigr)
			\\ & 
			\leq 
			C (q+l)! \frac{1}{\gamma(\lambda)^{(q-2-m)_+}}(\lambda+|x|^2)^{\frac12(q-l)}  
			\exp \bigl( A^{q-m+k} \bigr)
			\,.
		\end{align*}
		This yields~\eqref{e.fbb.ind.bound}; moreover, by the definition in \eqref{e.aqmk.rig}, the homogenized tensors then satisfy 
		\begin{equation}
			\label{e.aqmk.rig.bound}
			\frac{1}{(q+l)!} \sup_{x \in \RR^d}(\lambda + |x|^2)^{-\frac12 (q-l)} |\nabla_x^l \bba_{q,m,k}(x)| \leqslant C\frac{1}{\gamma(\lambda)^{(q-2-m)_+}} \exp(A^{q-m+k})\,.
		\end{equation}

		\smallskip
		
		\emph{Step 3.} The estimate for $\mu_{K-1}$. By our induction hypothesis, we have 
		\begin{align*}
			\biggl| 
			\sum_{k=1}^{K-2} \mu_{K-1-k}\int_{\RR^d} U_k U_0
			\biggr|
			&
			\leq
			\sum_{k=1}^{K-2}
			|\mu_{K-1-k}| 
			\| U_k \|_{L^2(\RR^d)}
			\| U_0 \|_{L^2(\RR^d)}
			\\ & 
			\leq 
			\sum_{k=1}^{K-2} \frac{\lambda^{\frac32 (K-1-k) + \frac{3k}{2}}}{\gamma(\lambda)^{K-1-k-1+k}}
			\exp\bigl( A^{K-1-k} + A^k) 
			\\ & 
			\leq
			C\frac{\lambda^{\frac32 (K-1)}}{\gamma(\lambda)^{K-2}} (K-2) \exp \bigl( A^{K-2} \bigr)
			\,.
		\end{align*}
		We recall by the energy bound that $\|\nabla U_0\|_{L^2(\RR^d)} \leq \sqrt{\lambda}$ and next estimate
		\begin{align*}
			\lefteqn{
				\biggl|
				\sum_{k=0}^{K-2}
				\sum_{m={1}}^{K-k} 
				\int_{\RR^d} \bba_{K-k,m,k} : \nabla^m U_k : \nabla U_0
				\biggr|
			} \quad & \notag 
			\\ & 
			\leq
			\|\nabla U_0\|_{L^2(\RR^2)}
			\sum_{k=0}^{K-2}
			\sum_{m={1}}^{K-k}
			\biggl( \int_{\RR^d} 
			|\bba_{K-k,m,k}|^2 |\nabla^m U_k|^2 \biggr)^{\sfrac12}  
			\notag \\ & 
			\leq 
			C\sqrt{\lambda}
			\sum_{k=0}^{K-2}
			\sum_{m={1}}^{K-k}
			(K{-}k)! \frac{\exp(A^{K-m})}{\gamma(\lambda)^{(K-k-m-2)_+}}
			\biggl( \int_{\RR^d} 
			(\lambda{+}|x|^2)^{K-k} |\nabla^m U_k(x)|^2\,dx \biggr)^{\sfrac12} 
			\notag \\ & 
			\leq 
			C\sqrt{\lambda}
			\sum_{k=0}^{K-2}
			\sum_{m={1}}^{K-k}  
			({B^k})^{K-k+m} \Lambda_2^{m}
			(K{-}k)!^2 m! \frac{\exp(A^{K-m})}{\gamma(\lambda)^{(K{-}k{-}m{-}2)_+}}
			\nnn U_k \nnn_{\lambda,{B^k}}
			\notag \\ & 
			\leq
			C{\lambda}^{1 + \frac32(K-2)}
			\sum_{k=0}^{K-2}
			\sum_{m={1}}^{K-k}
			(K{-}k)!^3 \Lambda_2^m \frac{1}{\gamma(\lambda)^{(K-k-m-2)_+ {+} k}}
			\\ & \qquad \qquad \qquad \qquad \qquad  
			\times \exp\bigl(A^{K-m}{+}A^k {+} {k(K{-}k{+}m)\log B {+} m \log \Lambda_2 }\bigr)
			\notag \\ & 
			\leq
			\frac{C{\lambda}^{\frac32(K-1)}}{\gamma(\lambda)^{K-2}}
			K^2 (K!)^3
			\exp\bigl(A^{K-1}{+}A^{K-2} {+} {K^2 \log B {+} K\log \Lambda_2}\bigr)
			\,.
		\end{align*}
		Using the triangle inequality and~\eqref{e.muk-1def} and  choosing $B$ sufficiently large depending on $A,$ we have that
		\begin{align*}
			|\mu_{K-1}| \leq \frac{C{\lambda}^{\frac32(K-1)}}{\gamma(\lambda)^{K-2}}\exp(A^K)\,.
		\end{align*}

		\smallskip
		
		\emph{Step 4.} 
		We estimate $\nnn \nabla \cdot( \bba_{K-k,m,k}:\nabla^m U_k ) \nnn_{\lambda,\Theta}$ for each $\Theta >0$.
		We will show that, for each $m\geq 2$, 
		\begin{align}
			\label{e.nnn.bbaUk}
			&
			\nnn \nabla \cdot( \bba_{K-k,m,k}:\nabla^m U_k ) \nnn_{\lambda,C\Theta}
			\notag \\ & \qquad 
			\leq
			\frac{1}{\gamma(\lambda)^{(K-k-2-m)_+}}\exp( A^{K-m} )
			\nnn U_k \nnn_{\lambda,\Theta}
			(C\Theta )^{K-k} \Lambda_2^m
			(K-k+m)!
		\end{align}
		We have that
		\begin{align*}
			\nabla^{l+1} ( \bba_{K-k,m,k}:\nabla^m U_k )
			&
			=
			\sum_{j=0}^{l+1}
			\binom{l+1}{j} \nabla^j \bba_{K-k,m,k}:\nabla^{l+m+1-j} U_k
			.
		\end{align*}
		We estimate the $\nnn\cdot\nnn_{\lambda,\Theta}$ norm of each term on the right side: by \eqref{e.aqmk.rig.bound} in Step~2  we have
		\begin{align*}
			\lefteqn{
				\biggl( 
				\int_{\RR^d}
				(\lambda+|x|^2)^{n}
				\bigl| 
				\nabla^j \bba_{K-k,m,k} \bigr|^2
				\bigl|\nabla^{l+m+1-j} U_k
				\bigr|^2
				\exp\bigl(  (\alpha|x|^2-c_2\lambda)_+\bigr)\,dx
				\biggr)^{\sfrac12}
			} \quad & 
			\\ & 
			\leq
			\frac{1}{\gamma(\lambda)^{(K-k-m-2)_+}}\exp( A^{K-m} )
			\\ & \qquad \qquad \qquad 
			\times
			\biggl( 
			\int_{\RR^d}
			(\lambda+|x|^2)^{n+K-k-j}
			\bigl|\nabla^{l+m+1-j} U_k \bigr|^2
			\exp\bigl( (\alpha|x|^2-c_2\lambda)_+\bigr)\,dx
			\biggr)^{\sfrac12}
			\\ & 
			\leq
			\frac{1}{\gamma(\lambda)^{(K-k-m-2)_+}}\exp( A^{K-m} )
			\\ & \qquad \qquad \qquad 
			\times
			\nnn U_k \nnn_{\lambda,\Theta}
			\Theta^{n+K+l+m+1-k-2j}
			\Lambda_2^{l+m+1-j}
			(n{+}K{-}k{-}j)!(l{+}m{+}1{-}j)!
			\,.
		\end{align*}
		Substituting this estimate into the identity above and using the triangle inequality,
		we obtain
		\begin{align*}
			\lefteqn{
				\biggl( 
				\int_{\RR^d}
				(\lambda+|x|^2)^{n}
				\bigl| \nabla^{l+1} ( \bba_{K-k,m,k}:\nabla^m U_k )
				\bigr|^2
				\exp\bigl( (\alpha|x|^2-c_2\lambda)_+\bigr)\,dx
				\biggr)^{\sfrac12}
			} \quad & 
			\\ & 
			\leq  
			\sum_{j=0}^{l+1}
			\binom{l+1}{j}\biggl( 
			\int_{\RR^d}
			(\lambda+|x|^2)^{n}
			\bigl| 
			\nabla^j \bba_{K-k,m,k} \bigr|^2
			\bigl|\nabla^{l+m+1-j} U_k
			\bigr|^2
			\exp\bigl( (\alpha|x|^2-c_2\lambda)_+\bigr)\,dx
			\biggr)^{\sfrac12}
			\\ &  
			\leq
			\frac{1}{\gamma(\lambda)^{(K-k-m-2)_+}}\exp( A^{K-m} )
			\nnn U_k \nnn_{\lambda,\Theta}
			(C\Theta)^{n+K+l+m+1-k}
			\Lambda_2^{l+m+1}
			(K-k+m)! \, n! \, l!
			\,.
		\end{align*}
		Taking the supremum over $n$ and $l$ yields~\eqref{e.nnn.bbaUk}.

		\smallskip
		
		\emph{Step 5.} The estimate for $U_{K-1}$.
		We apply Lemma~\ref{l.reg} to the equation~\eqref{e.Uk1.def} for~$U_{K-1}$ and use the triangle inequality to get
		\begin{align} 
			\label{e.UK-1Step5}
			\lefteqn{
				\nnn U_{K-1} \nnn_{\lambda,C\Theta}
			} \qquad & 
			\notag \\ & 
			\leq 
			\gamma(\lambda)^{-1} \biggl(
			\sum_{k=0}^{K-2}
			|\mu_{K-1-k}|
			\cdot
			\nnn 
			U_{k}
			\nnn_{\lambda,\Theta}
			+
			\sum_{k=0}^{K-2}
			\sum_{m=1}^{K-k}
			\nnn
			\nabla \cdot ( \bba_{K-k,m,k}:\nabla^m U_k ) 
			\nnn_{\lambda,C\Theta} \biggr)
			\,.
		\end{align}
		{Recall in the induction step that we must estimate $U_{K-1}$ in the $\nnn \cdot \nnn_{\lambda,C\Theta}$ norm with the choice $C = B, \Theta = B^{K-2}$ so that $C \Theta = B^{K-1}.$}
		We estimate the second term on the right side by 
		\begin{align*}
			\lefteqn{
				\sum_{k=0}^{K-2}
				\sum_{m=1}^{K-k}
				\nnn
				\nabla \cdot ( \bba_{K-k,m,k}:\nabla^m U_k ) 
				\nnn_{\lambda,{CB^k}}
			} \qquad & 
			\\ & 
			\leq
			\sum_{k=0}^{K-2}
			\sum_{m=1}^{K-k}\frac{1}{\gamma(\lambda)^{(K-k-m-2)_+}}
			\exp( A^{K-m} )
			\nnn U_k \nnn_{\lambda,{B^k}}
			(C{B^k} )^{K-k}
			\Lambda_2^{m}
			(K-k+m)!
			\\ & 
			\leq
			\sum_{k=0}^{K-2} \frac{1}{\gamma(\lambda)^{(K-k-3)_+ + k}}
			(C{B^k} )^{K-k} {\lambda^{\frac{3k}{2}}}
			\exp( A^{K-1} +A^{k+1})
			(K-k+1)!
			\\ & 
			\leq
			K (K+1)! \frac{{\lambda}^{\frac32(K-2)}}{\gamma(\lambda)^{K-2}}
			(C{B^{\frac{K^2}{4}}} )
			\exp( A^{K-1} +A^{K-1})
			\\ & 
			\leq 
			\frac{{\lambda}^{\frac32(K-2)}}{\gamma(\lambda)^{K-2}} \exp\biggl(2 A^{K-1} + (K+2)\log (K+2) + \tfrac{K^2}{4} \log B \biggr)\,.
		\end{align*}
		In the above, we tacitly used the inequality that $\nnn \cdot \nnn_{\lambda,B^{K-1}} \leq \nnn \cdot \nnn_{\lambda, B^k}$ for any $k \leq K-1.$ Once more, choosing $B$ suitably large in terms of $A,$ this completes the induction step for estimating $\nnn U_{K-1}\nnn_{\lambda, B^{K-1}},$ since from \eqref{e.UK-1Step5} using the triangle inequality we get 
		\begin{equation*}
			\nnn U_{K-1}\nnn_{\lambda, B^{K-1}} \leqslant  \frac{\lambda^{\frac32(K-2)}}{\gamma(\lambda)^{K-1}} \exp (A^K)\,. 
		\end{equation*}

		\emph{Step 6.} The estimates for $\bchi_{q,m,k}$ for $(q,m,k)\in J(K+1)$. By Lemma~\ref{l.cacciop.per}, we have
		\begin{align*}
			\lefteqn{
				\| \nabla_y \nabla_x^l \bchi_{q,m,k}(x,\cdot) \|_{L^2(\TT^d)}
			} \qquad & 
			\\ & 
			\leq
			\|  
			\bha \otimes \nabla_x^l \bchi_{q-1,m-1,k}(x,\cdot)
			\|_{L^2(\TT^d)}
			+
			\| \bha \nabla_x^{l+1} \bchi_{q-1,m,k}(x,\cdot)
			\|_{L^2(\TT^d)}
			\\ & \qquad
			+
			\| \nabla_x^l 
			\mathring{\bbf}_{q-1,m-1,k}(x,\cdot)
			\|_{L^2(\TT^d)}
			+
			\| \nabla_x^{l+1} \mathring{\bbf}_{q-1,m,k}(x,\cdot)
			\|_{L^2(\TT^d)}
			\\ & \qquad
			+
			\| \nabla_x^l 
			( W \mathring{\bchi}_{q-2,m,k})(x,\cdot)
			\|_{L^2(\TT^d)}
			+
			\sum_{r=m+k}^{q-2+k}
			|\mu_{q-2+k-r}|
			\| \nabla_x^l 
			\mathring{\bchi}_{r-k,m,k}(x,\cdot)
			\|_{L^2(\TT^d)}
			\,.
		\end{align*}
		Using the induction hypothesis and~\eqref{e.fbb.ind.bound} we can bound the terms on the first two lines by
		\begin{equation*}
			\frac{ 
				(\lambda+|x|^2)^{-\frac12(q-l-1)} }{\gamma(\lambda)^{(q-m-2)_+}}
			\exp\bigl( A^{q-m+k} \bigr)
			\,.
		\end{equation*}
		The first term on the third line is bounded by 
		\begin{align*}
			\lefteqn{
				\| \nabla_x^l 
				( W \mathring{\bchi}_{q-2,m,k})(x,\cdot)
				\|_{L^2(\TT^d)}
			} \qquad& 
			\\ & 
			\leq
			C^l 
			\sum_{j=0}^l
			\bigl| \nabla_x^j W(x) \bigr| 
			\bigl\| \nabla_x^{l-j} \mathring{\bchi}_{q-2,m,k}(x,\cdot) \bigr\|_{L^2(\TT^d)}
			\\ & 
			\leq
			C^\ell 
			\sum_{j=0}^l
			(1+|x|^2)^{\frac12(2-j)}
			(\lambda+|x|^2)^{\frac12(q-2-l+j)}
			\frac{1}{\gamma(\lambda)^{(q-m-4)_+}}\exp\bigl( A^{q-m+k} \bigr)
			\\ & 
			\leq 
			\frac{(\lambda+|x|^2)^{\frac12(q-l)}}{\gamma(\lambda)^{(q-m-4)_+}}
			\exp\bigl( A^{q-m+k} \bigr)
			\,.
		\end{align*}
		To prepare for the estimate of the second term on the third line, we first observe that, for every $r\in \{ m+k,\ldots,q+k-2\}$,
		\begin{align*}
			\lefteqn{
				|\mu_{q-2+k-r}|
				\| \nabla_x^l 
				\mathring{\bchi}_{r-k,m,k}(x,\cdot)
				\|_{L^2(\TT^d)}
			} \qquad & 
			\\  & 
			\leq
			\frac{{\lambda}^{\frac32(q-2+k-r)}}{\gamma(\lambda)^{q-3+k-r}} \frac{1}{\gamma(\lambda)^{(r-k-m-2)_+}}\exp\bigl( A^{q+k-r-1} + A^{r-m} \bigr)
			(\lambda+|x|^2)^{\frac12(r-k-l)}
			\\ & 
			\leq
			\frac{(\lambda + |x|^2)^{\frac32 (q-2+k-r)} (\lambda+|x|^2)^{\frac32{(r-k-l)}}}{\gamma(\lambda)^{q-3-m}}
			\exp\bigl( A^{q-m-1} + A^{q-m+k-2} \bigr)\\
			& = \frac{ (\lambda + |x|^2)^{\frac32 (q-l-2)}}{\gamma(\lambda)^{q-3-m}} \exp(A^{q-m-1} + A^{q-m+k-2})
			\,,
		\end{align*}
		and then sum this over $r \in \{ m+k,\ldots,q+k-2\}$ to get 
		\begin{align*}
			\lefteqn{
				\sum_{r=m+k}^{q-2+k}
				|\mu_{q-2+k-r}|
				\| \nabla_x^l 
				\mathring{\bchi}_{r-k,m,k}(x,\cdot)
				\|_{L^2(\TT^d)}
			} \qquad & 
			\\ &
			\leq
			(q-m-2)
			\frac{(\lambda+|x|^2)^{\frac32(q-l-2)}}{\gamma(\lambda)^{q-3-m}}
			\exp\bigl( A^{q-m-1} + A^{q-m+k-2}\bigr)\\
			& \leq \frac{1}{\gamma(\lambda)^{q-3-m}} ( \lambda + |x|^2)^{\frac32(q-l-2)} \exp(A^{q-m+k})  \,.
		\end{align*}
		Combining the above displays yields
		\begin{equation*}
			\| \nabla_y \nabla_x^l \bchi_{q,m,k}(x,\cdot) \|_{L^2(\TT^d)}
			\leq
			\frac{(\lambda+|x|^2)^{\frac32(q-l)}}{\gamma(\lambda)^{(q-m-2)_+}}
			\exp\bigl( A^{q-m+k} \bigr)
			\,.
		\end{equation*}
		This completes the induction step, and the proof of the proposition. 
	\end{proof}

	\subsection{Higher Order Expansions for Simple Eigenvalues} \label{ss.simple.high}
	Given the explicit construction of higher order correctors $\{\bchi_{q,m,k}\}$, along with their homogenized tensors $\bba_{q,m,k}$, the sequence $\{\mu_k\}_k,$ and smooth functions $\{U_k\}_k,$ we are now ready to prove Theorem \ref{t.simple.full} on the higher order expansion of a simple eigenvalue. 
	
	\begin{proof}[Proof of Theorem \ref{t.simple.full}]
		The proof of this theorem proceeds similarly to that of Theorem \ref{t.simple}, and we follow similar steps-- naturally, the associated computations are more involved. 
		
		We let $P \in \NN$ be an integer that will be fixed at the end of the proof. 
		
		\smallskip
		\emph{Step 1. } We set 
		\begin{equation*}
			\widetilde{\lambda}_\e 
			:=
			\lambda_0 + \e \mu_1 + \ldots + \e^P \mu_P,
		\end{equation*}
		and 
		\begin{equation*}
			w_\e(x) := \sum_{p=0}^P \sum_{k=0}^p \sum_{m=0}^{p-k} \nabla^m U_k(x) : \bchi_{p-k,m,k}\bigl(x,\frac{x}{\e} \bigr)\,. 
		\end{equation*}
		Then, the derivaton leading up to \eqref{e.macroscopic.derivation} shows that 
		\begin{align*}
			&-\nabla \cdot \bha^\e \nabla w_\e + (W(x) - \widetilde{\lambda}_\e) w_\e \\
			&\quad = \sum_{p=0}^P \e^p \biggl( (\mathcal{L}_0 - \lambda_0) U_p - \sum_{k=1}^{p-1}   \biggl( \nabla \cdot \sum_{m=1}^{p+1-k} \bba_{p+1-k,m,k} : \nabla^m U_k + \mu_{p-k} U_k\biggr)\biggr) + \nabla \cdot R_\e + S_\e,
		\end{align*}
		where, by Proposition \ref{p.exp.bounds} the functions $R_\e$ and $S_\e$ satisfy 
		\begin{equation} \label{e.Reps.high}
			\|R_\e\|_{L^2(\RR^d)} + \|S_\e\|_{L^2(\RR^d)} \leqslant C \e^P \frac{\lambda_0^{\sfrac{3P}2}}{\gamma({\lambda_0})^{P-1}} \exp(A^{P+1})\,. 
		\end{equation}

		\smallskip
		\emph{Step 2.} We set $\delta(\e,\lambda_0) := \frac{\e \lambda_0^{\sfrac32}}{\gamma(\lambda_0)} ,$ which, by \eqref{e.eps.condition} is smaller than $1.$ To complete the argument it remains to minimize the function $f(P) := \delta^P \exp(A^{P+1})\, $  over $P \in (1,\infty).$ Toward this goal, it is easily seen that $f(P) \to \infty$ as $P \to \infty,$ and $f(1) = \delta \exp(A^2) = O(\delta).$ At an interior critical point we must have $f^\prime(P) = 0, $ so that 
		\begin{equation*}
			0= \frac{f^\prime(P)}{f(P)} = \log \delta + A^{P+1}\log A\, ,
		\end{equation*}
		so that the optimal choice $P_*$, namely
		\begin{equation*}
			P_* \sim \frac{1}{\log A}\log \frac{|\log \delta|}{\log A}\, ,
		\end{equation*}
		and correspondingly,
		\begin{align*}
			f(P_*) &= \delta^P \exp(A^{P+1}) = \exp(P \log \delta + A^{P+1}) \\
			&= \exp \biggl( \frac{\log \delta}{\log A} \log \frac{|\log \delta|}{\log A} + \frac{|\log \delta| }{\log A} \biggr)\\
			&= \exp \bigl( - |\log_A\delta|\log |\log_A\delta| + |\log_A\delta| \bigr)\,. 
		\end{align*}
		It follows that $f(P_*) \leqslant \rho(\delta(\e,\lambda_0)) = \rho \bigl( \frac{\e \lambda_0^{\sfrac32}}{\gamma(\lambda_0)}\bigr)\,,$ where $\rho(t) := t^{c\log |\log t|}.$ Inserting this in \eqref{e.Reps.high} we obtain that 
		\begin{equation*}
			\|R_\e\|_{L^2(\RR^d)} + \|S_\e\|_{L^2(\RR^d)}  
			\leqslant 
			\gamma(\lambda_0)
			\rho\biggl(\frac{\e \lambda_0^{\sfrac32}}{\gamma(\lambda_0)}\biggr)\,. 
		\end{equation*}
		The proof is now completed exactly as in Step 3 of Theorem \ref{t.simple}.  
	\end{proof}
	
	\section{Expansions for High Multiplicity Eigenvalues and their Eigenfunctions} \label{s.multiple}

	\subsection{First Order Expansions for Multiple Eigenvalues} \label{ss.multiple.first}
	In this section we consider expansions for eigenvalues that are of high multiplicity. To be precise, let $\lambda_{0,j} = \lambda_{0,j+1} = \ldots = \lambda_{0,j+N-1} $ be an eigenvalue of $\mathcal{L}_0$ of multiplicity $N> 1,$ and let $\{\phi_{0,j+r}\}_{r=0}^{N-1}$ denote the associated eigenfunctions of $\mathcal{L}_0.$ Here, as usual we have used the enumeration of the eigenvalues of $\mathcal{L}_0$ in nondecreasing order, repeated according to multiplicity. 
	\smallskip
	We seek to expand the eigenvalues $\{\lambda_{\e,j+r}\}_{r=0}^{N-1}$ of the operator $\mathcal{L}_\e,$ and their associated eigenfunctions. Toward this goal we begin with a  preliminary lemma that we will crucially use. 
	
	\smallskip
	
	Next we define the matrix $\mathbb{D}$ via 
	\begin{equation}
		\label{e.Drs.def}
		\mathbb{D}_{rs} :=  \sum_{i=1}^d \int_{\RR^d} \bba_{3,e_i,0} \cdot \nabla \phi_{0,j+r} \partial_{x_i} \phi_{0,j+s} \,dx + \sum_{|\alpha|=2} \int_{\RR^d} \bba_{3,\alpha,0} \partial_x^\alpha \phi_{0,j+s} \cdot \nabla \phi_{0,j+r}\,dx  \, .
	\end{equation}
	The next lemma collects properties of $\mathbb{D}$ that will be crucially used in the sequel. 
	\begin{lemma}
		\label{l.Dsym}
		The matrix $\mathbb{D}$ satisfies
		\begin{equation}
			\label{e.good.Drs}
			\mathbb{D}_{rs}
			=
			\sum_{i,k=1}^d 
			\bigl \langle \bchi_{1,e_k,0} \bchi_{1,e_i,0} \bigr\rangle
			\int_{\RR^d} 
			(W(x) - \mu_0) 
			\partial_{x_k}\phi_{0,j+r}(x) \partial_{x_i}\phi_{0,j+s}(x)
			\,dx\,.
		\end{equation}
		In particular,~$\mathbb{D}$ is symmetric. 
	\end{lemma}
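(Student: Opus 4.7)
\textbf{Proof plan for Lemma \ref{l.Dsym}.}

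The plan is to establish \eqref{e.good.Drs} by computing the two families of homogenized tensors $\bba_{3,e_i,0}$ and $\bba_{3,\alpha,0}$ ($|\alpha|=2$) that appear in the definition of $\mathbb{D}_{rs}$, showing that the second family vanishes identically while the first takes the factorized form $(W(x)-\mu_0)\langle \chi_{1,e_i,0}\chi_{1,e_j,0}\rangle$. Once this is done, substitution yields \eqref{e.good.Drs} directly, and the symmetry of $\mathbb{D}$ follows from the manifest symmetry of $\langle \chi_{1,e_k,0}\chi_{1,e_i,0}\rangle$ in $(i,k)$.

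\textbf{Step 1: Vanishing of $\bba_{3,\alpha,0}$ for $|\alpha|=2$.} I would plug $(q,m,k)=(3,2,0)$ into the corrector equation \eqref{e.hocorrq} and check each summand on the right side. The base-case prescriptions give $\bchi_{2,0,k}=\bchi_{2,1,k}=0$ and $\bbf_{2,0,k}=\bbf_{2,1,k}=0$; the classical second-order corrector $\bchi_{2,2,0}$ is independent of $x$, so $\nabla_x\bchi_{2,2,0}=0$ and likewise $\nabla_x\cdot\bbf_{2,2,0}=0$; the term $W(x)\mathring{\bchi}_{1,2,0}$ vanishes since $\bchi_{1,2,0}=0$ (as $m>q$); and the $\mu$-sum runs over the empty set of indices $r\in\{2,\ldots,1\}$. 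Hence the right side is zero, forcing $\bchi_{3,2,0}\equiv 0$ by uniqueness of the mean-zero periodic solution. Combining this with the same vanishings in the definition \eqref{e.bbf.rig} gives $\bbf_{3,2,0}=0$ and therefore $\bba_{3,2,0}=0$.

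\textbf{Step 2: Factorization of $\bchi_{3,e_i,0}$ and computation of $\bba_{3,e_i,0}$.} For $(q,m,k,\alpha)=(3,1,0,e_i)$, all RHS terms in \eqref{e.hocorrq} vanish by the same base-case considerations \emph{except} $-W(x)\mathring{\chi}_{1,e_i,0}$ and $\mu_0\mathring{\chi}_{1,e_i,0}$. Since $\chi_{1,e_i,0}$ has mean zero, $\mathring{\chi}_{1,e_i,0}=\chi_{1,e_i,0}$, and the equation reduces to
\begin{equation*}
-\nabla_y\cdot\bha\nabla_y\chi_{3,e_i,0} = -(W(x)-\mu_0)\,\chi_{1,e_i,0}(y).
\end{equation*}
By separation of variables this yields $\chi_{3,e_i,0}(x,y) = -(W(x)-\mu_0)\,\zeta_i(y)$, where $\zeta_i\in H^1(\TT^d)$ is the unique mean-zero solution of $-\nabla_y\cdot\bha\nabla_y\zeta_i = \chi_{1,e_i,0}$. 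Since $\chi_{2,e_i,0}=\chi_{2,0,0}=0$, the flux defining $\bba_{3,e_i,0}$ in \eqref{e.bbf.rig} collapses to $(\bbf_{3,e_i,0})_j = -(W(x)-\mu_0)(\bha\nabla_y\zeta_i)_j$, so $(\bba_{3,e_i,0}(x))_j = -(W(x)-\mu_0)\,\langle(\bha\nabla_y\zeta_i)_j\rangle$.

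\textbf{Step 3: The cell-problem identity $\langle(\bha\nabla\zeta_i)_j\rangle = -\langle\chi_{1,e_i,0}\chi_{1,e_j,0}\rangle$.} This is the key computational point. I would test the first-order corrector equation $-\nabla_y\cdot\bha(e_j+\nabla_y\chi_{1,e_j,0})=0$ against $\zeta_i$ on $\TT^d$, getting $\langle\bha e_j\cdot\nabla\zeta_i\rangle + \langle\bha\nabla\chi_{1,e_j,0}\cdot\nabla\zeta_i\rangle = 0$. Symmetry of $\bha$ makes the first term equal $\langle(\bha\nabla\zeta_i)_j\rangle$, and testing the equation for $\zeta_i$ against $\chi_{1,e_j,0}$ identifies the second term as $\langle\chi_{1,e_i,0}\chi_{1,e_j,0}\rangle$. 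Combining these gives $(\bba_{3,e_i,0}(x))_j = (W(x)-\mu_0)\,\langle\chi_{1,e_i,0}\chi_{1,e_j,0}\rangle$.

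\textbf{Step 4: Assembly and symmetry.} Substituting the results of Steps 1 and 3 into the definition of $\mathbb{D}_{rs}$, the second double integral drops out and the first becomes exactly the RHS of \eqref{e.good.Drs} after the index relabelling $j\leftrightarrow k$. Symmetry $\mathbb{D}_{rs}=\mathbb{D}_{sr}$ then follows at once by swapping $i\leftrightarrow k$ in the double sum and using $\langle\chi_{1,e_k,0}\chi_{1,e_i,0}\rangle=\langle\chi_{1,e_i,0}\chi_{1,e_k,0}\rangle$. The main obstacle is Step 3---choosing the correct test function ($\chi_{1,e_j,0}$) to recognise $\langle(\bha\nabla\zeta_i)_j\rangle$ as the negative of the symmetric corrector correlation matrix; the rest of the argument is an exercise in tracking which correctors vanish under the recursive construction of Section \ref{ss.rigorous.simple}.
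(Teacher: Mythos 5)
Your proposal is correct, and Steps 2--4 take essentially the same route as the paper: the key identity $(\bba_{3,e_i,0}(x))_k = (W(x)-\mu_0)\langle\bchi_{1,e_i,0}\bchi_{1,e_k,0}\rangle$ is obtained by testing the cell problem for $\bchi_{3,e_i,0}$ against the first-order corrector. Your introduction of the auxiliary potential $\zeta_i$ solving $-\nabla_y\cdot\bha\nabla_y\zeta_i=\chi_{1,e_i,0}$ is a small cosmetic difference from the paper, which tests the equation $\nabla_y\cdot\bha\nabla_y\bchi_{3,e_i,0}=(W(x)-\mu_0)\bchi_{1,e_i,0}$ directly against $\bchi_{1,e_k,0}$; the two are the same energy pairing and yield the same conclusion.

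Where you genuinely diverge from the paper is Step~1. The paper handles the second group of terms in \eqref{e.Drs.def} by asserting that $\bba_{3,\alpha,0}$ for $|\alpha|=2$ is a constant tensor and then invoking the argument of Lemma~\ref{l.3rdorder.symm} (symmetrization plus three integrations by parts) to kill the integral. You instead inspect the recursive construction of Section~\ref{ss.rigorous.simple}: all source terms in \eqref{e.hocorrq} with $(q,m,k)=(3,2,0)$ vanish from the base-case prescriptions ($\bchi_{2,1,0}=\bbf_{2,1,0}=0$, $x$-independence of $\bchi_{2,2,0}$ and $\bbf_{2,2,0}$, $\bchi_{1,2,0}=0$ since $m>q$, and the empty $\mu$-sum), so $\bchi_{3,2,0}\equiv 0$, hence $\bbf_{3,2,0}\equiv 0$ and $\bba_{3,2,0}\equiv 0$. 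This makes the second group of terms identically zero without any integration by parts at all. It is a cleaner and more robust observation than the paper's, since the integration-by-parts route requires first re-establishing the cyclic-sum identity of Lemma~\ref{l.3rdorder.symm} for the different tensor $\bba_{3,\alpha,0}$ and then arranging the boundary terms correctly for $r\neq s$; your direct vanishing avoids this entirely. Both paths reach the same answer, but yours is the more transparent one.
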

	\begin{proof}
		Repeating the proof of Lemma \ref{l.3rdorder.symm}, i.e., utilizing that when $|\alpha| = 2,$ then $\bba_{3,\alpha,0}$ is constant, and integrating by parts three times yields that the second group of terms in $\mathbb{D}_{rs}$ evaluate to zero. 
		It therefore  remains to compute the first term. 
		Using the definition of the higher order homogenized tensors, we have 
		\begin{align*}
			\bba_{3,e_i,0} = \langle \bha \nabla_y \bchi_{3,e_i,0} \rangle \,,
		\end{align*}
		where the higher order corrector $\bchi_{3,e_i,0}$ is the unique mean-zero (in $y$) solution to 
		\begin{align*}
			\nabla_y \cdot \bha \nabla \bchi_{3,e_i,0} = (W(x) - \mu_0) \bchi_{1,e_i,0}\,.
		\end{align*}
		Testing this equation (in the fast variable) with $\bchi_{1,e_k,0}$ and using the PDE satisfied by the first order corrector $\bchi_{1,e_k,0}$ yields 
		\begin{align*}
			- (\bha_{3,e_i,0})_k = - e_k \cdot \int_{\TT^d} \bha \nabla_y \bchi_{3,e_i,0}\,dy &= 	\int_{\TT^d} \nabla \bchi_{1,e_k,0} \cdot \bha \nabla \bchi_{3,e_i,0} \,dy\\
			& = - (W(x) - \mu_0) \int_{\TT^d} \bchi_{1,e_k,0} \bchi_{1,e_i,0}\,d y\,.
		\end{align*}
		This completes the proof. 
	\end{proof}

	We are now ready to prove Theorem \ref{t.multiple}. 
	\begin{proof}[Proof of Theorem \ref{t.multiple}.] We have shown in Lemma \ref{l.Dsym} that $\mathbb{D}$ is symmetric. Now we let $r \in \{0,\ldots, N-1\},$ and let $\mu_{2,j+r}$ denote the $r$-th eigenvalue of the matrix $\mathbb{D}$ along with the eigenvector $\bbe^r$ (by symmetry and our assumption, we can arrange the $\mu_{2,j+r}$ in increasing order). We also define 
		\begin{align*}
			U_{0,j+r} := \sum_{s=0}^{N-1} e^r_s \phi_{0,j+s}\,. 
		\end{align*}
		Here, $\mathbf{e}^r := (e^r_1,\ldots, e^r_n)$ denotes the eigenvector of the matrix $\mathbb{D}$ associated to the eigenvalue $\mu_{2,j+r}.$ 
		Then, clearly, $\mathcal{L}_0 U_{0,j+r} = \lambda_{0,j} U_{0,j+r},$ and 
		\begin{align} \label{e.orthog}
			\int_{\RR^d} U_{0,j+r} U_{0,j+s} = \left\{ 
			\begin{array}{cc}
				1 & r = s \\
				0 &  r \neq s.
			\end{array}
			\right.
		\end{align}
		As in the proof of Theorem \ref{t.simple}, we let 
		\begin{equation*}
			\widetilde{\lambda}_{\e,j+r} := \lambda_{0,j} + \e^2 \mu_{2,j+r}\,,
		\end{equation*}
		and we set 
		\begin{equation*}
			w_{\e,j+r} := U_{0,j+r} + \e^2 U_{2,j+r} + \e \nabla (U_{0,j+r} + \e^2 U_{2,j+r}) \cdot \bchi^{(1)}(\tfrac{x}{\e})\,, 
		\end{equation*}
		where $U_{2,j+r}$ is the unique solution to 
		\begin{equation*}
			(\mathcal{L}_0 - \mu_0)U_{2,j+r} = \mu_{2,j+r} U_{0,j+r} + \nabla_x \cdot \sum_{m=1}^2 \sum_{|\alpha| = m} \bba_{3,\alpha,0} \partial_x^\alpha U_{0,j+r}\,,
		\end{equation*}
		which is orthogonal to each of $\{\phi_{0,j+s}\}_{s=0}^{N-1}.$ 
		
		We observe that by choice of $\mu_{2,j+r},$ such a solution exists for each $r = 0,\ldots, N-1.$ By linearity, this solution is also orthogonal to $\{U_{0,j+r}:r=0,\ldots, N-1\}.$ 
		
		Finally, as in the proof of Theorem \ref{t.simple}, for fixed $x \in \RR^d$ we let $z_{j+r}(x,\cdot): \TT^d \to \RR^d$ denote the unique $H^1(\TT^d)$ mean-zero solution to the PDE 
		\begin{equation*}
			\nabla \cdot \bha \nabla z_{j+r}(x,y) = (W(x) - \lambda_{0,j}) \nabla U_{0,j+r} \cdot \bchi^{(1)}(y)\,. 
		\end{equation*}
		We then set $z_{\e,j+r}(x) := \e^3 z_{j+r}(x,\sfrac{x}{\e})\,.$ 
		
		\smallskip
		\emph{Step 2. } In this step we compute $	(\mathcal{L}_\e - \widetilde{\lambda}_\e) w_\e.$ Proceeding as in the proof of Theorem \ref{t.simple} we find 
		\begin{align*}
			(\mathcal{L}_\e - \widetilde{\lambda}_\e) (w_\e - z_{\e,j+r}) \, &= (\mathcal{L}_0 - \la_0) U_{0,j+r} \\
			&+ \e^2 \biggl( (\mathcal{L}_0 - \la_0) U_2 + (W(x) - \la_0) U_2 - \mu_2 U_0 \\ &\qquad \qquad - \nabla_x \cdot \sum_{m=1}^2 \sum_{|\alpha| = m} \bba_{3,\alpha,0} \partial_x^\alpha U_0 \biggr) \\
			&+ \nabla \cdot \biggl( \sum_{k=1}^d s^\e_{e_k} - \chi^{1,\e}_{e_k} \bha^\e\biggr)\nabla \partial_{x_k} (U_0 + \e^2 U_2)\\
			&+ \e^2 \nabla_x \cdot \sum_{m=1}^2 \sum_{|\alpha|=m} \bba_{3,\alpha,0} \partial_x^\alpha U_0\\
			&+ \e^3 (W(x) - \la_0) \nabla U_2 \cdot \chi^{(1)}(\tfrac{x}{\e})   + (W(x) - \la_0) z_{\e,j+r}\\
			&-\e^3 \mu_2 \nabla (U_0 + \e^2 U_2) \cdot \chi^{(1)}(\tfrac{x}{\e}) - \e^4 \mu_2 U_2\,.
		\end{align*}
		
		By definition, the first two lines of the preceding display are zero. By the computations in Theorem \ref{t.simple} (specifically, those involving the second-order corrector equation and leading up to \eqref{e.wepsdivgrad}), and using the symmetry property of $\bba^{(3)}$ from lemma \ref{l.3rdorder.symm}, we obtain that the third line rewrites in divergence form as $\nabla \cdot R_\e,$ with $R_\e$ satisfying the bound 
		\begin{equation*}
			\|\nabla \cdot R_\e\|_{H^{-1}(\RR^d)} \leqslant \|R_\e\|_{L^2(\RR^d)} \leqslant C\e^2 \la_0^{\sfrac32} + C\e^3 \frac{\la_0^{\sfrac52}}{\gamma(\la_0)}\,.
		\end{equation*} 
		
		Writing the term in the fourth line, which is in divergence form, as $\nabla \cdot \tilde{R}_\e$ , we easily have the estimate 
		\begin{equation*}
			\|\nabla \cdot \tilde{R}_\e\|_{H^{-1}(\RR^d)} \leqslant C \e^2 \la_0\,. 
		\end{equation*}
		Finally, combining the fifth and sixth lines and denoting them by $S_\e,$ we estimate similarly to \eqref{e.seps3bnd}
		\begin{equation*}
			\|S_\e\|_{L^2(\RR^d)} \leqslant C\e^3 \la_{0,j}^{\sfrac52} + C\frac{\e^2\la_{0,j}^2}{\gamma(\la_{0,j})} \,. 
		\end{equation*}
		Therefore we can write 
		\begin{equation} \label{e.PDEw-z}
			(\mathcal{L}_\e - \widetilde{\lambda}_{\e,j+r}) (w_{\e,j+r} - z_{\e,j+r}) = \nabla \cdot \overline{R}_{\e,j+r} + \tilde{S}_{\e,j+r},
		\end{equation}
		with 
		\begin{equation*}
			\|\nabla \cdot \overline{R}_{\e,j+r}\|_{H^{-1}(\RR^d)} + \|\tilde{S}_{\e,j+r}\|_{L^2(\RR^d)} \leqslant C \e^2 \lambda_{0,j}^{\sfrac32} + C\frac{\e^2 \lambda_{0,j}^3}{\gamma(\lambda_{0,j})} \,. 
		\end{equation*}
		In order to proceed as in Step 2 of the proof of Theorem \ref{t.simple}, we make some preliminary observations about a convenient basis in which to solve \eqref{e.PDEw-z}. We note that for any $r,s \in \{0,\ldots, N-1\},$ in light of \eqref{e.orthog}, 
		\begin{align*}
			&\int_{\RR^d} w_{\e,j+r}w_{\e,j+s} \,dx \\ &\quad = \int_{\RR^d} \bigl(U_{0,j+r} + \e U_{2,j+r} + \e \nabla (U_{0,j+r} + \e U_{2,j+r}) \cdot \bchi^{(1)}(\tfrac{x}{\e}) \bigr)\\ &\quad \quad \quad \quad \times \bigl(U_{0,j+s} + \e U_{2,j+s} + \e \nabla (U_{0,j+s} + \e U_{2,j+s}) \cdot \bchi^{(1)}(\tfrac{x}{\e}) \bigr)\,dx \\
			&\quad =\int_{\RR^d} U_{0,j+r}U_{0,j+s}\,dx + \e \biggl( \int_{\RR^d}U_{2,j+r}U_{0,j+s}  + \int_{\RR^d} U_{0,j+r}U_{2,j+s}  \biggr) \\ &\quad \quad +  \e \biggl( \int_{\RR^d} U_{0,j+r}\nabla U_{0,j+s} \cdot \bchi^{(1)}(\tfrac{x}{\e}) + U_{0,j+s} \nabla U_{0,j+r}\cdot \bchi^{(1)}(\tfrac{x}{\e})  \biggr) +  \delta_\e(\lambda_{0,j}),
		\end{align*}
		with 
		\begin{equation*}
			|\delta_\e(\lambda_{0,j})| \leqslant C\e^2 \lambda_{0,j}^{\sfrac32} + C \frac{\e^2 \lambda_{0,j}^3}{\gamma(\lambda_{0,j})}\,.
		\end{equation*}
		Now, by construction, $\int_{\RR^d} U_{2,j+r} U_{0,j+s} = 0,$ and $\int_{\RR^d} U_{0,j+r} U_{2,j+s} = 0,$ and for the other $O(\e)$ term in the computation above, we observe that 
		\begin{align*}
			&\biggl|\int_{\RR^d} U_{0,j+r}\nabla U_{0,j+s} \cdot \bchi^{(1)}(\tfrac{x}{\e}) + U_{0,j+s} \nabla U_{0,j+r}\cdot \bchi^{(1)}(\tfrac{x}{\e}) \biggr|\\
			&\quad \leqslant \e \|U_{0,j+r}\nabla U_{0,j+s}\|_{H^1(\RR^d)}\|\bchi^{(1)}\|_{H^{-1}(\RR^d)} \leqslant \delta_\e(\lambda_{0,j})\,. 
		\end{align*}
		In light of \eqref{e.orthog} it then follows then that the set $\{w_{\e,j+r}\}_{r=0}^{N-1}$ is approximately orthogonal in $L^2(\RR^d):$
		\begin{equation} \label{e.almost.orthog}
			\int_{\RR^d} w_{\e,j+r}w_{\e,j+s} = \delta_{rs} + \delta_\e(\lambda_{0,j})^2\,. 
		\end{equation}
		In order to complete the argument, we let $\{\psi_{\e,j+r}\}_{r=0}^{N-1}$ denote the eigenvalues of $\mathcal{L}_\e$ associated with $\lambda_{\e,j},\ldots, \lambda_{\e,j+N-1},$ respectively, that are normalized according to the conditions 
		\begin{equation*}
			\int_{\RR^d}\psi_{\e,j+s}\phi_{0,j+r}\,dx = e^r_s\,.
		\end{equation*}
		For each $r,s \in \{0,\ldots, N-1\},$ we set
		\begin{equation*}
			\begin{aligned}
				d_{\e,j+r,j+s} &:= \int_{\RR^d} \bigl( w_{\e,j+r} - z_{\e,j+r}\bigr) \frac{\psi_{\e,j+s}}{\|\psi_{\e,j+s}\|_{L^2(\RR^d)}}\,dx\,.
			\end{aligned}
		\end{equation*}
		Then by the triangle inequality and \eqref{e.almost.orthog} we find
		\begin{equation*}
			\begin{aligned}
				&|d_{\e,j+r,j+s} - \delta_{rs}| 
				\leqslant \biggl|\int_{\RR^d} (w_{\e,j+r}- z_{\e,j+r}) \biggl(\frac{\psi_{\e,j+s}}{\|\psi_{\e,j+s}\|_{L^2(\RR^d)}} - (w_{\e,j+s} - z_{\e,j+s})\biggr)\biggr| 
				\\&\quad
				\leqslant \biggl\|\frac{\psi_{\e,j+s}}{\|\psi_{\e,j+s}\|_{L^2(\RR^d)}} - (w_{\e,j+s}- z_{\e,j+r})\biggr\|_{L^2(\RR^d)} + \delta_\e(\lambda_0)\,. 
			\end{aligned}
		\end{equation*}
		From \eqref{e.PDEw-z} we find 
		\begin{equation*}
			\sum_{s=0}^{N-1} (\lambda_{\e,j+s} - \widetilde{\lambda}_{\e,j+r})^2 d_{\e,j+r,j+s}^2 \leqslant \delta_\e(\lambda_{0,j})^2\,. 
		\end{equation*}
		Combining the last two displays yields 
		\begin{equation*}
			|\lambda_{\e,j+r} - \widetilde{\lambda}_{\e,j+r}| \leqslant \delta_\e(\lambda_{0,j}) \Biggl( 1 + \biggl\|\frac{\psi_{\e,j+r}}{\| \psi_{\e,j+r}\|_{L^2(\RR^d)}} - (w_{\e,j+r} - z_{\e,j+r})\biggr\|_{L^2(\RR^d)}\Biggr)\,.
		\end{equation*}
		To complete the argument, we must estimate the convergence rates for the eigenfunctions, and for this let us note that 
		\begin{align*}
			\lefteqn{
				(\mathcal{L}_\e - \widetilde{\lambda}_{\e,j+r})\biggl(\frac{\psi_{\e,j+r}}{\| \psi_{\e,j+r}\|_{L^2(\RR^d)}} - (w_{\e,j+r} - z_{\e,j+r})\biggr) 
			} \qquad & 
			\notag \\ & 
			= (\lambda_{\e,j+r} - \widetilde{\lambda}_{\e,j+r}) \frac{\psi_{\e,j+r}}{\| \psi_{\e,j+r}\|_{L^2(\RR^d)}} + \nabla \cdot \overline{R}_{\e,j+r} + \tilde{S}_{\e,j+r}\,.
		\end{align*}
		so that, 
		\begin{align*}
			&\biggl\|\frac{\psi_{\e,j+r}}{\| \psi_{\e,j+r}\|_{L^2(\RR^d)}} - (w_{\e,j+r} - z_{\e,j+r})\biggr\|_{H^1(\RR^d)} \leqslant |\lambda_{\e,j+r} - \widetilde{\lambda}_{\e,j+r} | + \delta_{\e}(\lambda_{0,j})\\
			& \quad \leqslant \delta_\e(\lambda_{0,j}) \biggl( 1 + \biggl\|\frac{\psi_{\e,j+r}}{\| \psi_{\e,j+r}\|_{L^2(\RR^d)}} - (w_{\e,j+r} - z_{\e,j+r})\biggr\|_{L^2(\RR^d)}\biggr) + \delta_{\e}(\lambda_{0,j})\,.
		\end{align*}
		The proof is finished by buckling, and using the triangle inequality. 
	\end{proof}
	
	\subsection{Rigorous Construction and Estimates}
	\label{ss.rig.multiple}
	As is well-known from perturbation theory, if $\la_{0,j}$ is an eigenvalue of $\mathcal{L}_0$ with multiplicity $N > 1$ then we must construct all $N$ branches of eigenpairs splitting off of $\la_{0,j}$ together, since the branches interact with each other. As in the simple case, our construction of the higher order branches will be inductive. 
	
	\smallskip
	\emph{Base case.} 
	\begin{itemize}
		\item For each $s := \{0,\ldots, N-1\},$ we set $\mu_0 := \lambda_{0,j},$ and let $\{\phi_{0,j+r}\}_{r=0}^{N-1}$ denote the $N$ orthonormal eigenfunctions of $\mathcal{L}_0$ associated to the eigenvalue $\lambda_{0,j},$ which is the $j$th eigenvalue of $\mathcal{L}_0$ in an enumeration of the eigenvalues in nondecreasing order. In order to ease notation, we will largely suppress the dependence on the index $j $ in the remainder of this section. 
		\item For all $N$ branches, we initialize our construction by setting $\bchi_{0,0,k} = 1,$ and we define $\bbf_{0,0,k} := 0$ and $\bba_{0,0,k} := 0$ for every $k \in \NN_0.$ 
		\item $\bchi_{q,0,k} := 0$, as well as $\bbf_{q,0,k} := 0,$ and $\bba_{q,0,k} := 0,$ for every $q \in \NN,k \in \NN_0.$
		\item $\bchi_{q,m,k} := 0,$ as well as $\bbf_{q,m,k} := 0$ and $\bba_{q,m,k} := 0$ for every $q,m,k \in \NN_0$ with $m > q.$
		\item We define $\bchi_{1,1,k},$ for each $k \in \NN_0$ to be the usual first-order corrector in classical periodic homogenization, that is, the unique solution of \eqref{e.hocorr11}. Also we define 
		\begin{align*}
			\bbf_{1,1,k} := \bha(I + \nabla_y \bchi_{1,1,k}), \quad \mbox{ and } \bba_{1,1,k} := \langle \bbf_{1,1,k} \rangle\,.
		\end{align*}
		As usual, $\bba_{1,1,k} = \bba$ is the usual homogenized matrix of classical periodic homogenization. Note that $\bchi_{1,1,k}, \bbf_{1,1,k}$ and $\bba_{1,1,k}$ depend neither on the index $k$, nor on the slow variable $x.$
		\item We define $\bchi_{2,1,k} := 0,$ as well as $\bbf_{2,1,k} := 0$ and $\bba_{2,1,k} := 0$, for every $k \in \NN_0.$
		\item We define $\bchi_{2,2,k}$ for each $k \in \NN_0$, to be the unique mean-zero solution of \eqref{e.hocorr1}, define $\bbf_{2,2,k} := \bha \nabla_y \bchi_{2,2,k} + \bha \otimes \bchi_{1,1,k},$ and $\bba_{2,2,k} := \langle \bbf_{2,2,k} \rangle\,. $
		\item We \emph{define} that 
		\begin{align} \label{e.ansatzU0}
			U_{0,j+r} = \sum_{s=0}^{N-1} e^r_s \phi_{0,j+s} \, ,
		\end{align}
		where $\{e^r_s\}_{r,s=0}^{N-1}$ are the eigenfunctions of the symmetric matrix $\mathbb{D}$, which form an orthonormal basis of $\RR^N$.  We will denote the $N$ eigenvalues of $\mathbb{D}$ by $\{\mu_{2,j+r}\}_{r=0}^{N-1}$ (we recall $\lambda_{0,j}= \ldots = \lambda_{0,j+N-1}$ and $j$ denotes the lowest index such that $\lambda_{0,j}$ is an eigenvalue of $\mathcal{L}_0.$ )
		\item We set  $\mu_{1,j+r} = 0$ and also set $U_{1,j+r}\equiv 0$  for every $r \in \{0,\ldots, N-1\}.$ 
		
	\end{itemize}
	\emph{Induction Step.} 
	Let us suppose that, for some integer $K \in \NN, K \geqslant 2,$ we have defined $\bchi_{q,m,k,j+s},$ the associated fluxes $\bbf_{q,m,k,j+s}$ and the homogenized coefficients $\bba_{q,m,k,j+s}$ for indices 
	\begin{align}
		\label{e.qmk.induction.mult}
		(q,m,k) \in J(K) := \{ (q,m,k) : m \in \NN_0, 0 \leqslant k \leqslant K, 0 \leqslant q \leqslant K+2-k, 0 \leqslant s \leqslant N-1\}\,,
	\end{align}
	The higher order correctors depend on the specific branch $s,$ and this is the reason for the last index in each of these objects, next to the three familiar ones from the simple eigenvalue case. 
	
	Additionally in the induction hypothesis, we assume that $\mu_{k,j+s}$ have been defined for every $k \in \{0,\ldots, K-2\}, s \in \{0,\ldots, N-1\}$ along with macroscopic functions $U_{k,j+s},$ and these functions satisfy the normalization conditions 
	\begin{align*}
		\int_{\RR^d} U_{k,j+s} \phi_{0,j+r} \,dx = \alpha_{k,s,r}, \quad \quad k \in \{1,\ldots, K-4\}, \,r,s \in \{0, \ldots, N-1\}\,.
	\end{align*}
	
	In the induction step we give ourselves the task of determining the $N$ branches of macroscopic correctors at order $\{U_{K-1,j+r}\}_{r=0}^{N-1}$, and $\{\mu_{K-1,j+r}\}_{r=0}^{N-1}$, \emph{as well as the normalization conditions} $\alpha_{K-3,s,t}$ for the function $\{U_{K-3,s}\}_{s=0}^{N-1}.$ We point out that, as we will explain below, the normalization conditions for a given stage arises two stages further in the inductive construction as part of a solvability criterion. 
	
	Precisely, by the formal derivation of the homogenized equations, we recall that $U_{K-1,j+r} $ must satisfy 
	\begin{align}
		\label{e.UK-1def.mult}
		\lefteqn{
			(\mathcal{L}_0 - \lambda_0)U_{K-1,j+r} 
		} \qquad & 
		\notag \\ & 
		= 
		\nabla_x \cdot \biggl( \sum_{k=0}^{K-2} \sum_{m=0}^{K-1-k} \sum_{|\alpha| = m} \bba_{K-k,\alpha,k,j+r} \partial_x^\alpha U_{k,j+r}\biggr) + \sum_{k=0}^{K-2} \mu_{K-1-k,j+r} U_{k,j+r}\,.
	\end{align}
	Solvability for this PDE requires that the right-hand side of \eqref{e.UK-1def.mult} be orthogonal to each of $\{\phi_{0,j+t}\}_{t=0}^{N-1}.$ Imposing this, and using \eqref{e.ansatzU0} yields 
	\begin{equation*}
		\begin{aligned}
			&	\int_{\RR^d} \sum_{k=0}^{K-3} \sum_{m=0}^{K-1-k} \sum_{|\alpha| = m} \bba_{K-k,\alpha,k,j+s} \cdot \nabla \phi_{0,j+t} \partial_x^\alpha U_{k,j+s}\,dx\\
			& = \sum_{k=0}^{K-1}  \mu_{K-1-k,j+s} \int_{\RR^d} U_{k,j+s}\phi_{0,t}\,dx + \mu_{K-1,j+s} \int_{\RR^d} U_{0,s}\phi_{0,t} \,dx + \mu_{2,s} \int_{\RR^d} U_{K-3,j+s}\phi_{0,t}\,dx\,. 
		\end{aligned}
	\end{equation*}
	Observe carefully that the terms corresponding to the index $k=K-2$ do not appear in the preceding display: this is because $\mu_{1,j+s} = 0$ for each $s,$ and the homogenized coefficients $\bba_{2,\alpha,k, \cdot} $ vanishes when $|\alpha| = 1,$ by the base-case. Toward determining the normalizations $\alpha_{K-3,s,t},$ let us write 
	\begin{align*}
		U_{K-3,j+s} := \mathring{U}_{K-3,j+s} + \sum_{t=0}^{N-1} \alpha_{K-3,s,t}\phi_{0,t}\,, \quad s = 0,\ldots, N-1\,.
	\end{align*} 
	By the induction hypothesis, $\mathring{U}_{K-3,j+s}$ , which is the unique solution to the PDE for $U_{K-3,j+s}$ which is orthogonal to $\{\phi_{0,t}\}_{t=0}^{N-1}$ exists. Inserting this decomposition in the preceding display yields the following problem for $\mu_{K-1,s}$ and $\{\alpha_{K-3,s,t}\}_{t=0}^{N-1}\,:$
	\begin{equation} \label{e.D-mu}
		(\mathbb{D} - \mu_{2,j+s}) \begin{pmatrix}
			\alpha_{K-3,s,0}\\
			\vdots\\
			\alpha_{K-3,s,N-1}
		\end{pmatrix} = \mu_{K-1,j+s} \begin{pmatrix}
			e_{s,0}\\
			\vdots\\
			e_{s,N-1}
		\end{pmatrix} + \mathbb{F}_{j+s}\,,
	\end{equation} 
	here $\mathbb{F}_{j+s} \in \RR^N$ is defined via 
	\begin{align*}
		(	\mathbf{F}_{j+s} )_t &:= - \sum_{k=1}^{K-4} \sum_{m=0}^{K-1-k}\sum_{|\alpha| = m} \int_{\RR^d} (\bba_{K-k,\alpha,k,j+s} \cdot \nabla \phi_{0,t}) \partial_x^\alpha U_{k,j+s}\,dx \\ &\quad + \sum_{k=1}^{K-4} \mu_{K-1-k,j+s} \int_{\RR^d} U_{k,j+s}\phi_{0,j+t}\,dx\,. 
	\end{align*}
	
	At this point we use our assumption that $\mu_{2,j+s}$ is a simple eigenvalue of $\mathbb{D}$ for each $s \in \{0,\ldots, N-1\}$ and that the associated eigenvector is $\mathbf{e}_s := (e_{s,0},\ldots, e_{s,N-1})^t$. Taking the inner product of \eqref{e.D-mu} with the unit vector $\mathbf{e}_s$ and using the symmetry of $\mathbb{D}$ from Lemma \ref{l.Dsym}, we find 
	\begin{align*}
		\mu_{K-1,j+s} = - \mathbb{F}_{j+s} \cdot \mathbf{e}_s\,. 
	\end{align*}
	
	By elementary linear algebra then, since the right-hand side of \eqref{e.D-mu} is orthogonal to the kernel of $\mathbb{D} - \mu_{2,j+s},$ it follows that we can invert \eqref{e.D-mu} to find the undetermined coefficients $(\alpha_{K-3,s,t})_{s,t=0}^{N-1},$ and in turn, since $U_{K-3}$ is then uniquely determined, the existence of $U_{
		K-1,j+r}$ is obtained for each $r = 0,\ldots, N-1.$ The macroscopic function $U_{K-1}$ is nonunique up to an arbitrary function in the kernel of $\mathcal{L}_0 - \la_0,$ which, as in the induction step, is determined as part of the solvability condition for $U_{K+1}.$ Towards completing the inductive construction, we notice that 
	
	\begin{itemize}
		\item The above argument uniquely determines, for each $s \in \{0,\ldots, N-1\},$ $\mu_{K-1,j+s} \in \RR.$ It also determines $U_{K-1,j+s}\in L^2(\RR^d)$ which is unique upto addition of linear combinations of $\{\phi_{0,j+r}\}_{r=0}^{N-1}.$ 
		\item Fixing $s = 0,\ldots, N-1,$ the functions $\bchi_{2,m,k}$, as well as $\bbf_{2,m,k}$ and $\bba_{2,m,k},$ have already been defined in the base case above, for every $k$, and in particular for $k = K+1.$ The only nonzero object among these is $\bchi_{2,2,k},$ (and therefore only $\bba_{2,2,k}).$ Notice these objects do not depend on $s,$ nor on the macroscopic variable $x.$  
		\item We define, $\bchi_{q,m,k,j+s}$ for each $(q,m,k) \in J(K+1)\setminus J(K),q\neq 2, s \in \{0,\ldots, N-1\}$, to be the unique solution of 
		\begin{equation} \label{e.hocorrq.m} \left\{
			\begin{aligned}
				-\nabla_y \cdot \bha \nabla_y \bchi_{q,m,k,j+s} & = \nabla_y \cdot (\bha \otimes \bchi_{q-1,m-1,k,j+s})  + \nabla_y \cdot \bha\nabla_x \bchi_{q-1,m,k,j+s}  \\ & \qquad + \mathring{\bbf}_{q-1,m-1,k,j+s}   + \nabla_x \cdot \mathring{\bbf}_{q-1,m,k,j+s}   - W(x) \mathring{\bchi}_{q-2,m,k,j+s}  \\ & \qquad
				+ \sum_{r=m+k}^{q-2+k} \mu_{q-2+k-r,j+s} \mathring{\bchi}_{r-k,m,k,j+s} \quad \quad \mbox{ in } \mathbb{T}^d,\\
				\langle \bchi_{q,m,k,j+s} \rangle = 0.
			\end{aligned} \right.
		\end{equation}
		Note that~$q\neq 2$ implies~$q\geq 3$ and thus~$k\leq K$. Therefore all the terms on the right side have been defined already, by the induction hypothesis, because all the index triples belong to~$J(K)$
		and the highest index~$i$ of~$\mu_i$ that appears in \eqref{e.hocorrq} is~$i=K-1$, which we have above defined in~\eqref{e.muk-1def}. Moreover, the right side of the equation has zero mean and so the equation is uniquely solvable. 
		We then define 
		\begin{equation} \label{e.bbf.rig.m}
			\bbf_{q,m,k,j+s} := \bha \nabla_x \bchi_{q-1,m,k,j+s} + \bha \otimes \bchi_{q-1,m-1,k,j+s} + \bha \nabla_y \bchi^s_{q,m,k},
		\end{equation}
		and then
		\begin{equation} \label{e.aqmk.rig.m}
			\bba_{q,m,k}(x) := \langle \bbf_{q,m,k,j+s}(x,\cdot) \rangle.
		\end{equation}
		This completes the inductive construction of all the objects we set out to construct. 
	\end{itemize}
	\smallskip
	By easy modifications of the arguments in the proof of Proposition \ref{p.exp.bounds}, we can prove:
	\begin{proposition} \label{p.exp.bounds.mult}
		There exists $C(d,\theta)<\infty$
		such that, for every~$q,m,k\in\NN$ with~$m\leq q$, and for each $s \in \{0, \ldots, N-1\},$
		\begin{equation}
			\label{e.reg.indyhyp.m}
			\left\{
			\begin{aligned}
				& |\mu_{k,j+s}| 
				\leq 
				\frac{\lambda_{0,j}^{\frac{3k}2}}{{\gamma(\lambda_{0,j})^{k-1}}}
				\exp \bigl( C^{k+1} \bigr) \,,\\
				& \nnn U_{k,j+s} \nnn_{\lambda,C^k}
				\leq 
				\frac{\lambda_{0,j}^{\frac{3k}2}}{{\gamma(\lambda_{0,j})^{k}}}
				\exp \bigl( C^{k+1} \bigr) \,, \\
				& \frac1{(q+l)!}\sup_{x \in \RR^d} 
				\bigl( \lambda_{0,j} +|x|^2\bigr)^{-\frac 12(q-l)}
				\| \nabla_x^l \bchi_{q,m,k,j+s}(x,\cdot) \|_{{H^1}(\mathbb{T}^d)} 
				\leq 
				\frac{\exp ( C^{q-m+k} )
				}{\gamma(\lambda_{0,j})^{(q-2-m)_+}}\,,
			\end{aligned}
			\right.
		\end{equation}
		where, we recall that $\gamma(\lambda_{0,j})$ is the spectral gap of the multiple eigenvalue $\lambda_{0,j}. $ 
	\end{proposition}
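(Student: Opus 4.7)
The plan is to follow the inductive scheme in the proof of Proposition \ref{p.exp.bounds} essentially verbatim, carrying the branch index $s \in \{0,\ldots,N-1\}$ as a passive parameter. The corrector equations \eqref{e.hocorrq.m} differ from \eqref{e.hocorrq} only by the branch dependence of $\mu_{\cdot,j+s}$ and of the lower-order macroscopic data entering the source term, so the $H^1(\TT^d)$ bounds on $\bchi_{q,m,k,j+s}$ come from the same application of Lemma \ref{l.cacciop.per} used in Step~6 of that proof, and the corresponding flux and tensor bounds \eqref{e.fbb.ind.bound}--\eqref{e.aqmk.rig.bound} then propagate by the triangle inequality.

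For the base case, the $k=0$ bound on $\nnn U_{0,j+s}\nnn_{\lambda,1}$ follows from the orthonormal expansion \eqref{e.ansatzU0} together with Corollary \ref{l.L2decay.eigf} applied to each eigenfunction $\phi_{0,j+t}$. The $k=1$ bounds are trivial since $\mu_{1,j+s}=0$ and $U_{1,j+s}\equiv 0$. For $k=2$, the bound $|\mu_{2,j+s}|\leq \|\mathbb{D}\|_{\mathrm{op}}$ combined with the formula \eqref{e.good.Drs} and the universal $L^\infty(\TT^d)$ bound on the first-order corrector $\bchi_{1,e_k,0}$ yields a bound on each entry of $\mathbb{D}$ by the weighted $L^2$-decay of $\nabla\phi_{0,j+r}$; the bound on $U_{2,j+s}$ is then obtained from Lemma \ref{l.reg} as in Step~5 of Proposition \ref{p.exp.bounds}.

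The only genuinely new ingredient in the induction step arises in determining $\mu_{K-1,j+s}$ and $U_{K-1,j+s}$. The scalar $\mu_{K-1,j+s} = -\mathbb{F}_{j+s}\cdot\mathbf{e}_s$ is estimated by bounding each term of $\mathbb{F}_{j+s}$ using the induction hypothesis and the already-proven bounds on $\bba_{K-k,\alpha,k,j+s}$, exactly as in Step~3 of the simple-case proof, together with the fact that $|\mathbf{e}_s|=1$. Writing $U_{K-1,j+s} = \mathring U_{K-1,j+s} + \sum_t \alpha_{K-1,s,t}\phi_{0,j+t}$, the component $\mathring U_{K-1,j+s}$ orthogonal to the $\lambda_{0,j}$-eigenspace is estimated by Lemma \ref{l.reg} with $\gamma(\lambda_{0,j})$ as the governing spectral gap. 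The finite-dimensional coefficients $\alpha_{K-3,s,t}$, which are pinned down two steps later in the hierarchy as explained in Section \ref{ss.rig.multiple}, are recovered by inverting $\mathbb{D}-\mu_{2,j+s}$ on $\mathbf{e}_s^\perp$, and the resulting bound feeds back into the weighted norm estimate for $U_{K-3,j+s}$.

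The main obstacle is controlling the operator norm of $(\mathbb{D}-\mu_{2,j+s})^{-1}\vert_{\mathbf{e}_s^\perp}$, which is governed by the gap $\gamma_{\mathbb{D}}:=\min_{r\neq s}|\mu_{2,j+r}-\mu_{2,j+s}|>0$ guaranteed by the standing hypothesis that $\mathbb{D}$ has $N$ distinct eigenvalues. Since $\gamma_{\mathbb{D}}$ does not appear explicitly on the right side of \eqref{e.reg.indyhyp.m}, the constants $C=C(d,\theta)$ there must be interpreted as implicitly depending on the spectral data of $\mathbb{D}$; equivalently, the dependence can be absorbed by enlarging the doubly-exponential prefactor $\exp(C^{k+1})$. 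Once this bookkeeping is in place, Steps 4--6 of the proof of Proposition \ref{p.exp.bounds} transfer verbatim with the index $s$ carried as a decoration, completing the induction.
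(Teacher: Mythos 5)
Your proof sketch follows the same inductive scheme that the paper itself relies upon (the paper's ``proof'' is a single sentence asserting that the argument is an easy modification of the simple case), and the mechanics you describe — carrying the branch index $s$ as a passive decoration, reusing Lemma~\ref{l.cacciop.per} for the cell problems \eqref{e.hocorrq.m}, reusing Lemma~\ref{l.reg} for the orthogonal component $\mathring{U}_{K-1,j+s}$, estimating $\mu_{K-1,j+s}=-\mathbb{F}_{j+s}\cdot\mathbf{e}_s$ term by term, and recovering the two-step-lagged coefficients $\alpha_{K-3,s,t}$ from the reduced resolvent of $\mathbb{D}-\mu_{2,j+s}$ on $\mathbf{e}_s^\perp$ — are exactly the modifications the construction in Section~\ref{ss.rig.multiple} calls for. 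You also correctly flag the genuinely new analytical ingredient, namely the operator-norm bound on $(\mathbb{D}-\mu_{2,j+s})^{-1}\vert_{\mathbf{e}_s^\perp}$, whose size is controlled by the gap $\gamma_{\mathbb{D}}$ of the symmetric matrix $\mathbb{D}$. This dependence is indeed silently elided in the statement of Proposition~\ref{p.exp.bounds.mult}.

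One point in your write-up needs to be sharpened. You suggest that the $\gamma_{\mathbb{D}}$-dependence ``can be absorbed by enlarging the doubly-exponential prefactor $\exp(C^{k+1})$,'' but as written this is not defensible: $\gamma_{\mathbb{D}}$ is determined by the eigenvalue data at level $\lambda_{0,j}$, not by $(d,\theta)$, and nothing prevents it from being arbitrarily small. Since the inversion of $\mathbb{D}-\mu_{2,j+s}$ recurs at every stage $K$ of the induction, the effect is a factor of roughly $\gamma_{\mathbb{D}}^{-1}$ per level, which cannot be hidden inside a constant $C(d,\theta)$ that is independent of the problem. The correct reading is that the constant $C$ in Proposition~\ref{p.exp.bounds.mult} must be permitted to depend on $\gamma_{\mathbb{D}}$ as well (equivalently, on the index $j$ that determines $\mathbb{D}$), or else $\gamma_{\mathbb{D}}$ should appear explicitly in the denominators of \eqref{e.reg.indyhyp.m} in the same way $\gamma(\lambda_{0,j})$ does. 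This is a defect of the paper's statement, not of your argument; once the dependence is made explicit, your proof goes through.
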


	\subsection{Higher Order Expansions for Multiple Eigenvalues} \label{ss.multiple.high}
	We are finally in a position to prove Theorem \ref{t.multiple.full}. 
	\begin{proof}[Proof of Theorem \ref{t.multiple.full}]
		The proof proceeds exactly like that of Theorem \ref{t.simple.full}, and is concluded like in the proof of Theorem \ref{t.multiple}. 
	\end{proof}

	\subsection*{Acknowledgments}
	S.A.~was partially supported by NSF grant DMS-2000200. 
	R.V.~was partially supported by the Simons Foundation (Award \# 733694) and an AMS-Simons travel award.

	\bibliographystyle{acm}
	\bibliography{ref}
	
\end{document}